\newtheorem{thm}{Theorem}[section]
\newtheorem{proposition}[thm]{Proposition}
\newtheorem{lemma}[thm]{Lemma}
\newtheorem{corollary}[thm]{Corollary}
\newtheorem{defn}[thm]{Definition}
\newcommand{\R}{\mathbb{R}}
\newcommand{\Z}{\mathbb{Z}}
\newcommand{\set}[1]{\left\{#1\right\}}
\newcommand{\I}{\Omega}
\newcommand{\C}{\mathcal{C}}
\newcommand{\ii}{i}
\newcommand{\var}{\text{var}}
\newcommand{\LL}{\ensuremath{\mathcal{L}} }
\title[Statistical Properties for CML]{Statistical properties of coupled expanding maps on a lattice with general infinite range couplings and H\"older densities}
\author[C. Gupta]{Chinmaya Gupta}
\address[Chinmaya Gupta]{University of Southern California, 3620 S. Vermont Ave, Los Angeles, CA, 90089}
\email{chinmaya.gupta@usc.edu}
\urladdr{http://www-bcf.usc.edu/~chinmayg}
\author[N. Haydn]{Nicolai Haydn}
\address[Nicolai Haydn]{University of Southern California, 3620 S. Vermont Ave, Los Angeles, CA, 90089}
\email{nhaydn@usc.edu}
\urladdr{}
\thanks{}
\subjclass[2000]{37A05, 37A25, 37A60, 60G10, 60F05, 80C20}
\keywords{Coupled Lattice Maps, Statistical Properties of Dynamical Systems, Almost Sure Invariance Principle.}
\begin{document}

\maketitle

\begin{abstract}
We continue the development of transfer operator techniques for expanding maps on a lattice coupled by general interaction functions. We obtain a spectral gap for an appropriately defined transfer operator, and, as corollaries, the existence of an invariant conformal probability measure for the system, exponential decay of correlations, the central limit theorem and the almost sure invariance principle.
\end{abstract}

\section{Introduction}

For one dimensional dynamical systems, the conditions under which there exists a unique ergodic invariant probability measure, supported on an invariant attractor and governing the dynamics of initial conditions in the basin of the attractor, are well understood.  The complexity of the geometry in higher dimensions makes the problem much harder, however, in certain cases, significant results have been proven. For instance, the uniformly hyperbolic case has been presented in \cite{MR2423393}, the non-uniformly hyperbolic case of the H\'enon map in \cite{MR1087346, MR1218323}, and recently, the $n$ dimensional analogue in \cite{MR2415378}. In all, even though the 1 dimensional case is better understood in general than the $n$ dimensional setting for $n>1$, the theory of SRB measures for finite dimensional dynamical systems is fairly complete (for an excellent, though somewhat dated, overview, see \cite{MR1933431} and the references therein).

However, outside of a few settings, not much is known in the infinite dimensional setup. The primary problem is one of methodology, and the settings in which something can be said about an infinite dimensional dynamical system are those in which the techniques applicable in the finite dimensional case can be extended to infinite dimensions. Coupled maps on a lattice (CML) provide an example of infinite dimensional systems that can be studied by extensions of finite dimensional techniques, and for this reason, they have been extensively studied.

An excellent review of the theory of CML is provided in \cite{MR1464237}. In recalling the background on CML theory, we will be more restrictive; in particular, we will focus only on the development of ideas relevant to the results in this paper. An imprecise definition of a CML is provided in this section to set the context of the results that we mention. A precise formulation follows in later sections. Suppose we have a map $\tau$ on $[0,1)$ and let $\Omega = [0,1)^\mathbb{Z}$. We have an extended map $\bar \tau$ on the space $\Omega$ which can be defined as $(\bar \tau(x))_i = \tau (x_i)$ where $x\in \Omega$ and $x_i\in [0,1).$ Suppose now we have interactions $E$ between the different dynamical systems $(\tau_i, [0,1))$. In the simplest case of $E$ being the nearest neighbor diffusive coupling, we can specify the interactions as, for a given $0<\epsilon \ll 1$,
\[
(E(x))_i = (1 - \epsilon) x_i + \frac{\epsilon}{2} x_{i -1} + \frac{\epsilon}{2} x_{i+1}.
\] $\epsilon$ therefore becomes a parameter that tunes the strength of the interactions between the nodes of the lattice. We note also that this coupling specified has range 1, because only the closest neighbors to each node influence the state of that node.  The system under study is now iterations of $E\circ \bar \tau$.

Bunimovich and Sina\u i \cite{MR967468} considered $\tau$ as expanding maps of the interval with nearest neighbor diffusive coupling with a non-constant diffusion strength, chosen so that the coupling map was onto. They established that if the interval map exhibits sufficient expansion, then there exists a unique invariant measure, mixing in time and space. They construct the invariant measure as the limit of the Gibbs measures for the finite dimensional projections of their coupled system. Then, in \cite{MR1184471} the authors implement a numerical algorithm to extract orbits periodic in time and space for a 1-d lattice of H\'enon maps, coupled by a weak nearest neighbor diffusive coupling. They define a new family of Lyapunov exponents that estimate the growth rate of spatially inhomogeneous perturbations. 

Following the results in \cite{MR967468} and \cite{MR1184471}, a natural question arose: if the coupling is viewed as perturbation of the uncoupled system, what can happen if the coupling strength becomes large relative to the inherent stability of the uncoupled system? This question was studied in \cite{MR1184470}, where the authors considered CML, where the maps on each node were  one dimensional with a globally attracting stable periodic trajectory. These maps are coupled by a diffusive nearest neighbor coupling. The authors prove that if the coupling strength is sufficient large, then the phase space of the coupled map lattice is split into a complicated partition with many basins of different attractors.

The question of phase transitions for a one parameter family of finite-dimensional CML, with the coupling strength $\epsilon$ as the parameter of interest, was studied further, theoretically and numerically, by \cite{MR1192055}. They obtained sufficient conditions in terms of $\epsilon$ that the lattice have a continuum of ergodic components when the individual node maps are the doubling maps. They also produced an example of a function $f$ for which the uncoupled system is mixing, whereas for a suitable $\epsilon$, there are several domains in the phase space, interchanging with the period 2.

Keller and K\"unzle \cite{MR1176625} then investigated transfer operator techniques for CML maps. They also considered expanding maps on each node of the lattice, and coupled them by a weak coupling (small $\epsilon$). The authors established the spectral theory using spaces of bounded variation, closely following the setting in \cite{MR967468}. Then, in a three-part paper, Gundlach and Rand \cite{MR1211566, MR1211567, MR1211568} developed the stable manifold theory for coupled lattice maps with short or finite range interactions (Part I), and use this to establish the existence of a natural spatio-temporal measure that plays the role of the usual SRB measure in the case of temporal systems (Part III) and the existence and uniqueness of Gibbs states for higher dimensional symbolic systems by using thermodynamic formalism (Part II). The transfer operator approach of \cite{MR1176625} was developed further by \cite{MR1981170} where the author established a spectral gap for weakly coupled real-analytic circle maps. In \cite{MR1936014}, the author then modified the approach in \cite{MR1981170} to construct generalized transfer operators associated to potentials  and establish a spectral gap for small potentials for weakly coupled analytic maps. Some limit theorems, such as the central limit theorem, moderate deviations, and a partial large deviations result were also established.

Returning to the setting of expanding real maps, \cite{MR2083421} gave a proof of the existence, uniqueness and exponential mixing of invariant measures for weakly coupled lattice maps without cluster expansion. The coupling considered was finite range and had a very specific form, and the transfer operator was considered on the space of measures of bounded variation with absolutely continuous finite dimensional marginals. Subsequently, efforts were made to admit more general couplings and more general maps on the lattice nodes, and in \cite{MR2395729} the authors study a one-dimensional lattice of weakly coupled piecewise expanding maps of the interval. Strong assumptions are still required on the coupling, however, the authors do not require that the coupling be a homeomorphism of the infinite dimensional state space. They prove that the transfer operator defined on an appropriate space of densities with bounded variations, with absolutely continuous finite dimensional marginals (with respect to the Lebesgue measure), has a spectral gap. This implies that there exists a measure with exponential decay of correlations in time and space. In \cite{MR2200881} the authors extend the results established in \cite{MR2395729} to include lattices of any dimensions and couplings of infinite rage with the coupling strength decaying exponentially in space.

In the setting of \cite{MR2083421} and \cite{MR2200881} the Bardet, Gou\"ezel and Keller \cite{MR2293068} prove the central limit theorem and the local limit theorem for Lipschitz functions depending on finitely many coordinates.  The proof of the local limit theorem requires the additional assumption that for any compact interval $J$
\[
\sigma \sqrt{2 \pi n} \mu_\epsilon \left\{  
x: \sum_{k = 0}^{n-1} f\circ T^k_\epsilon (x) \in J
 \right\} \to |J|.
\] $\sigma$ here is the variance in the central limit theorem. As in \cite{MR2083421, MR2200881}, the transfer operator is defined on the Banach space of measures of bounded variation with absolutely continuous finite dimensional marginals.

In this paper we continue the development of transfer operator techniques for studying existence and uniqueness of invariant measures corresponding to an initial potential in an appropriate class of potentials for expanding maps on a lattice coupled by an infinite range coupling. As in the setting of \cite{MR2083421, MR2200881}, we obtain a spectral gap for the transfer operator in an appropriate space of densities. In contrast to the setting of \cite{MR2083421, MR2200881}, we obtain our potentials and densities from a class of H\"older functions, and admit spatial couplings more general that those previously considered. We do not assume the existence of a reference measure; the theory is developed with respect to suitable potentials. Finally, we only require that the minimal expansion $\eta^{-1} > 1$ and the coupling strength $\epsilon > 0$ be related as $\epsilon \eta <1$ for the existence, and uniqueness, of the invariant probability measure.  We then use an abstract result of Gou\"ezel to obtain the almost sure invariance principle for our system. We also show that the invariant probability measure $\nu$ is conformal on open sets.

\section{Main results.}

In this section, we will list the main theorems that we prove. The precise definitions for the objects that appear here will follow in the body of the paper.

We start with a result known in the dynamical systems literature as ``decay of correlations''. We establish this result by showing that an appropriate transfer operator $\mathcal{L}$ has a spectral gap. For more details on the operator $\mathcal{L}$ see sections $\ref{sec:LasotaYorkeL}, \ref{sec:E}$ and $\ref{sec:mathL}$. We also check that the Lasota-Yorke inequality is stable under perturbation and in doing so, establish the almost sure invariance principle by using \cite{Go2010}. We use the space $\mathcal{C}$ of H\"older continuous functions with the norm
$\|\cdot\|=|\cdot|_\infty+|\cdot|_\beta$ where $|\cdot|_\beta$ is the $\beta$-H\"older constant (the precise definition
is below).

\begin{thm} Let $T$ be the coupled lattice map on the lattice system $\Omega$ and $\mathcal{C}$ the space 
of H\"older continuous functions on $\Omega$.

Then for any potential function $f\in \mathcal{C}$ there exists an invariant measure $\nu$ which is $g$-conformal
where $g=f+h-f\circ T$ for an $h\in\mathcal{C}$. Moreover 
there exists a constant $0<\varsigma<1$ with the property that for any $\Phi_1, \Phi_2\in\C$ there exists a constant $C_1$ such that 
\[
\left|
\int \Phi_1 \circ T^n \Phi_2 d\nu - \int \Phi_1 d\nu \int \Phi_2 d\nu
\right| 
\leq C_1 |\Phi_1|_\infty \|\Phi_2\| \varsigma^n.
\]
\end{thm}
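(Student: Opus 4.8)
The plan is to run the classical ``spectral gap $\Rightarrow$ decay of correlations'' scheme, the only substantial input being a Lasota--Yorke (Doeblin--Fortet) inequality for $\mathcal{L}$ on $\mathcal{C}$, which is assembled from the estimates of Sections~\ref{sec:LasotaYorkeL}, \ref{sec:E} and \ref{sec:mathL}. \emph{Step 1 (Lasota--Yorke).} For the transfer operator $\mathcal{L}$ attached to the potential $f$ I would first record constants $C>0$ and $\theta\in(0,1)$ with
\[
|\mathcal{L}^n\phi|_\beta\le C\theta^n|\phi|_\beta+C|\phi|_\infty,\qquad |\mathcal{L}^n\phi|_\infty\le C|\phi|_\infty,
\]
where $\theta$ is controlled by the minimal expansion $\eta^{-1}$ and the H\"older data of $f$: it is precisely the assumption $\epsilon\eta<1$ that forces the contribution of the infinite-range coupling $E$, measured in the weighted metric $\bar d$ on $\Omega$, to be outweighed by the expansion, so that $\theta<1$.

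\emph{Step 2 (quasi-compactness and peripheral spectrum).} Since bounded subsets of $(\mathcal{C},\|\cdot\|)$ are precompact in $|\cdot|_\infty$ (Arzel\`a--Ascoli, using that $\Omega$ is totally bounded in $\bar d$), the inequality of Step~1 together with Hennion's theorem (or Ionescu--Tulcea--Marinescu) gives $\rho_{\mathrm{ess}}(\mathcal{L})\le\theta$; combined with a lower bound $\rho(\mathcal{L})\ge\lim_n|\mathcal{L}^n\mathds{1}|_\infty^{1/n}>\theta$ coming from positivity of $\mathcal{L}$, this makes $\mathcal{L}$ quasi-compact on $\mathcal{C}$. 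A Perron--Frobenius / Birkhoff-cone argument, again using positivity, then shows that $\lambda:=\rho(\mathcal{L})$ is a simple eigenvalue with a strictly positive H\"older eigenfunction $\varphi$, that the rest of the spectrum lies in a disk of radius $\varsigma\lambda$ with $\varsigma<1$, and (via Schauder--Tychonoff applied to $m\mapsto\mathcal{L}^{*}m/(\mathcal{L}^{*}m)(\mathds{1})$ on probability measures) that $\mathcal{L}^{*}$ has an eigenmeasure $m$, $\mathcal{L}^{*}m=\lambda m$.

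\emph{Step 3 (conformal invariant measure and decay of correlations).} Set $\nu=\varphi\,m\big/\!\int\varphi\,dm$ and let $g=f+h-f\circ T$ be the normalized potential of the statement, $h\in\mathcal{C}$ being read off from $\varphi$ and $\lambda$, so that the transfer operator $\mathcal{L}_g$ built from $g$ satisfies $\mathcal{L}_g\mathds{1}=\mathds{1}$ and $\mathcal{L}_g^{*}\nu=\nu$; the latter is exactly the assertion that $\nu$ is $g$-conformal, and $T$-invariance follows by duality, $\int\phi\circ T\,d\nu=\int\mathcal{L}_g(\phi\circ T)\,d\nu=\int\phi\,\mathcal{L}_g\mathds{1}\,d\nu=\int\phi\,d\nu$. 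The spectral gap passes from $\mathcal{L}$ to $\mathcal{L}_g$ (they are conjugate up to the scalar $\lambda$), so $\mathcal{L}_g=P+N$ with $P\phi=\big(\int\phi\,d\nu\big)\mathds{1}$, $PN=NP=0$, and $\|N^n\|\le C\varsigma^n$. Using the transfer-operator identity $\mathcal{L}_g^n(\Phi_1\circ T^n\cdot\Phi_2)=\Phi_1\cdot\mathcal{L}_g^n\Phi_2$ and $\mathcal{L}_g^{*}\nu=\nu$,
\[
\int\Phi_1\circ T^n\,\Phi_2\,d\nu=\int\Phi_1\,\mathcal{L}_g^n\Phi_2\,d\nu=\Big(\int\Phi_2\,d\nu\Big)\int\Phi_1\,d\nu+\int\Phi_1\,N^n\Phi_2\,d\nu,
\]
and since $\nu$ is a probability measure and $|\cdot|_\infty\le\|\cdot\|$, the last term is at most $|\Phi_1|_\infty\|N^n\Phi_2\|\le C|\Phi_1|_\infty\|\Phi_2\|\varsigma^n$, which is the claimed estimate with $C_1=C$.

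The main obstacle is Step~1: establishing the Lasota--Yorke inequality in the infinite-dimensional lattice setting, where the inverse branches of $T=E\circ\bar\tau$ and the infinite-range coupling $E$ interact in a delicate way. The correct geometrically weighted metric $\bar d$ on $\Omega$ and the balance condition $\epsilon\eta<1$ are exactly what make the H\"older seminorm estimate close, and this is the technical heart of the paper. A secondary difficulty is guaranteeing $\rho(\mathcal{L})>\rho_{\mathrm{ess}}(\mathcal{L})$ and the simplicity of the peripheral eigenvalue; although positivity of $\mathcal{L}$ makes this expected, it still requires a genuine cone / Hilbert-metric argument in this setting rather than a soft compactness statement.
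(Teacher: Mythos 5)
Your overall architecture (Lasota--Yorke inequality, Hennion quasi-compactness, analysis of the peripheral spectrum, then the duality computation for the correlation bound) matches the paper's, and your final computation is essentially Theorem~\ref{t:cordecay} verbatim. However, there is a circularity in the way you have arranged Steps~1--2. The uniform bounds you record in Step~1, namely $|\mathcal{L}^n\phi|_\infty\le C|\phi|_\infty$ and the Lasota--Yorke estimate with bounded constants, hold only for the \emph{normalized} operator: for the raw operator attached to $f$ one only has $|\mathcal{L}^n\phi|_\infty\le e^{n|f|_\infty}|\phi|_\infty$, so the inequality you start from presupposes the eigendata $(\lambda,\varphi)$ that you only construct in Step~2 \emph{from} the quasi-compactness derived in Step~1. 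Relatedly, the lower bound $\rho(\mathcal{L})\ge\lim_n|\mathcal{L}^n\mathds1|_\infty^{1/n}>\theta$ is not justified for the unnormalized operator, and quasi-compactness alone allows several eigenvalues on the peripheral circle; ruling these out, together with simplicity, is precisely the part you defer to an unexecuted ``cone / Hilbert-metric'' argument, which in this setting (no reference measure, the operator defined only as a pointwise limit of the finite-lattice approximations $L_k$) would require verifying finite projective diameter of the image cone for the coupled operator --- a nontrivial task the paper never undertakes.

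The paper breaks the circularity in a different, softer way. It first treats the \emph{uncoupled} operator $P$: Schauder--Tychonoff on probability measures gives $P^*\nu=\lambda\nu$, and Schauder--Tychonoff applied to the convex, Arzel\`a--Ascoli-compact set $\Delta_f$ gives a strictly positive eigenfunction $h$ with $Ph=\lambda h$; this yields the normalized operator $L$ with $L\mathds1=\mathds1$ and the normalized potential $g$, and only afterwards is the coupling introduced via $\mathcal{L}(\Phi)=L(\Phi)\circ E^{-1}$. With this definition $\mathcal{L}\mathds1=\mathds1$ automatically, so $\rho(\mathcal{L})=1>\rho_{\mathrm{ess}}(\mathcal{L})\le(C_E\eta)^\beta$ is immediate from Theorem~\ref{t:LY-perturb} and Hennion--Herv\'e, and no Perron--Frobenius construction for the coupled operator is needed at all. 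Simplicity of $1$ and the absence of other peripheral eigenvalues are then obtained not by a cone contraction but by positivity combined with density of backward orbits of $T$ (Lemmas~\ref{l:dense-back-orbit} and~\ref{l:top-trans}): a nonnegative eigenfunction vanishing at one point vanishes identically, and any accumulation point of $\mathcal{L}^n\psi$ must be constant, forcing $\mathcal{L}^n\psi\to\nu(\psi)$. Finally, your remark that $\mathcal{L}_g^{*}\nu=\nu$ ``is exactly'' $g$-conformality is too quick: the paper proves conformality on open sets on which $T$ is injective by a separate computation with the approximations, since there is no countable generating partition here. If you reorder your plan as the paper does --- normalize the uncoupled operator first, couple second, and replace the cone step by the topological-mixing argument --- each step you outline can be carried out.
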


Finally, by using an abstract result from \cite{Go2010}, and Theorem \ref{t:LY-perturb}, we prove that

\begin{thm}
The system satisfies the almost sure invariance principle.
\end{thm}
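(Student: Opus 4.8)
The plan is to deduce the almost sure invariance principle from the abstract spectral criterion of Gou\"ezel \cite{Go2010}, applied to the twisted transfer operators built from the normalised operator $\mathcal{L}$ for which $\nu$ is the leading eigenmeasure (this is the operator whose spectral gap was established above). Fix a real-valued observable $\phi\in\mathcal{C}$ and, after replacing $\phi$ by $\phi-\int\phi\,d\nu$, assume $\int\phi\,d\nu=0$; write $S_n=\sum_{k=0}^{n-1}\phi\circ T^k$. For $t$ in a neighbourhood of $0$ define $\mathcal{L}_t\Phi=\mathcal{L}(e^{it\phi}\Phi)$, so that $\mathcal{L}_0=\mathcal{L}$ and, after the usual identification of densities with measures, $\int \mathcal{L}_t^n\Phi\,d\nu=\int e^{itS_n}\Phi\,d\nu$. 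Gou\"ezel's theorem then yields the ASIP once one checks: (a) a Lasota--Yorke inequality for the family $\{\mathcal{L}_t\}$ with constants uniform in $t$ near $0$; (b) Lipschitz continuity of $t\mapsto\mathcal{L}_t$ from $(\mathcal{C},\|\cdot\|)$ into $(\mathcal{C},|\cdot|_\infty)$; and (c) the second-order (Green--Kubo) expansion of the leading eigenvalue of $\mathcal{L}_t$ at $t=0$.

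For (a) the key observation is that multiplication by the unimodular factor $e^{it\phi}$ is a bounded operator on $\mathcal{C}$ whose norm is controlled by $|t|$: since $|e^{it\phi}|_\infty=1$ and $|e^{it\phi(x)}-e^{it\phi(y)}|\le |t|\,|\phi(x)-\phi(y)|$ gives $|e^{it\phi}|_\beta\le |t|\,|\phi|_\beta$, one has $\|e^{it\phi}\Phi\|\le\|\Phi\|+|t|\,|\phi|_\beta\,|\Phi|_\infty$. Feeding this into the Lasota--Yorke estimate $\|\mathcal{L}^n\Phi\|\le A\theta^n\|\Phi\|+B\,|\Phi|_\infty$ that underlies the spectral gap produces $\|\mathcal{L}_t^n\Phi\|\le A'\theta^n\|\Phi\|+B'|\Phi|_\infty$ with $A',B',\theta$ independent of $t$ for $|t|\le t_0$. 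This is exactly the content of Theorem~\ref{t:LY-perturb}; the single point requiring care is that the H\"older seminorm is taken with respect to the lattice metric $\bar d$, so one must verify both that $e^{it\phi}$ is $\beta$-H\"older for $\bar d$ with the stated $O(|t|)$ constant and that the spatial-decay bookkeeping used to handle the infinite-range coupling is not disturbed when $\Phi$ is replaced by $e^{it\phi}\Phi$.

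For (b), $\|(\mathcal{L}_t-\mathcal{L}_s)\Phi\|_\infty=\bigl|\mathcal{L}\bigl((e^{it\phi}-e^{is\phi})\Phi\bigr)\bigr|_\infty\le C\,|t-s|\,|\phi|_\infty\,|\Phi|_\infty$, which is the required continuity in the weak norm. Combined with (a), the Keller--Liverani perturbation theorem applies: for $|t|\le t_0$ one gets $\mathcal{L}_t=\lambda_t\Pi_t+Q_t$ with $\lambda_t$ a simple eigenvalue near $1$, $\Pi_t$ a rank-one projection, $\sup_n\kappa^{-n}\|Q_t^n\|<\infty$ for some $\kappa<1$ uniform in $t$, and $t\mapsto\lambda_t,\Pi_t$ continuous with quantitative moduli. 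The standard evaluation of $\lambda_t$ against the leading left and right eigenvectors, together with the exponential decay of correlations provided by the spectral gap, then gives $\lambda_t=1-\tfrac{t^2}{2}\sigma^2+o(t^2)$, the linear term vanishing because $\int\phi\,d\nu=0$, with $\sigma^2=\int\phi^2\,d\nu+2\sum_{k\ge1}\int\phi\,(\phi\circ T^k)\,d\nu$. These are precisely the hypotheses of \cite{Go2010}, which then gives the ASIP (with Gou\"ezel's error rate) when $\sigma^2>0$; when $\sigma^2=0$ the function $\phi$ is an $L^2(\nu)$-coboundary, $S_n$ is bounded in $L^2$, and the ASIP is trivial.

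The main obstacle is step (a): the H\"older norm on $\mathcal{C}$ is finely tuned to the lattice structure and to the exponential spatial decay of the coupling, so one must make sure that inserting the factor $e^{it\phi}$ — unimodular, but with a nonzero H\"older constant and in general depending on all lattice coordinates — neither destroys the contraction rate $\theta<1$ nor the summability estimates that yielded the spectral gap. Once Theorem~\ref{t:LY-perturb} is available, steps (b) and (c) are routine: (b) is the elementary bound above, and (c) is the usual Keller--Liverani perturbation computation feeding the Green--Kubo variance into the abstract theorem of \cite{Go2010}.
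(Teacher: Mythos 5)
Your proposal is correct and follows essentially the same route as the paper: define the twisted operators $\mathcal{L}_t\Phi=\mathcal{L}(e^{it\phi}\Phi)$, use the Lasota--Yorke inequality of Theorem~\ref{t:LY-perturb} together with the bound $|e^{it\phi}\Phi|_\beta\le|\Phi|_\beta+|t|\,|\phi|_\beta|\Phi|_\infty$ to get $\sup_n\|\mathcal{L}_t^n\|_{\mathcal{C}\to\mathcal{C}}<\infty$ uniformly for small $t$, and invoke Gou\"ezel's abstract spectral ASIP criterion. Your additional steps (b) and (c) (weak-norm Lipschitz continuity and the Keller--Liverani/Green--Kubo expansion of the leading eigenvalue) are harmless but not needed in the form of Gou\"ezel's theorem quoted in the paper, which requires only the spectral gap for $\mathcal{L}$ and the uniform boundedness of the twisted iterates.
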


Various statistical properties such as the law of iterated logarithms, the weak invariance principle and the central limit theorem now follow as corollaries.

\begin{corollary} (CLT)
For observables $\Phi\in\mathcal{C}$ one has
$$
\mathbb{P}\left(\frac{\sum_{j=0}^{n-1}\Phi\circ T^j-n\nu(\Phi)}{\sigma\sqrt{n}}\le t\right)
\rightarrow N(t)
$$
as $n\rightarrow\infty$ where $T$ is the coupled lattice map  and 
$N(t)$ is the normal distribution.
\end{corollary}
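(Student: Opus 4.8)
The plan is to obtain the central limit theorem as an immediate consequence of the almost sure invariance principle proved above. Recall that the almost sure invariance principle furnishes, after passing to a suitable extension of the probability space, a one-dimensional Brownian motion $W$ with variance parameter $\sigma^{2}$ such that
\[
\sum_{j=0}^{n-1}\Phi\circ T^{j}-n\nu(\Phi)=W(n)+o\!\left(n^{1/2-\delta}\right)\qquad\text{almost surely}
\]
for some $\delta>0$. Dividing through by $\sigma\sqrt{n}$, the principal term $W(n)/(\sigma\sqrt{n})$ has, for every fixed $n$, exactly the standard normal law by the scaling property of Brownian motion, while the remainder tends to $0$ almost surely, and hence in probability. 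Slutsky's theorem then gives convergence in distribution of the normalized Birkhoff sums to $N(0,1)$. Since this conclusion depends only on the laws of the Birkhoff sums, it is insensitive to the particular coupling used in the invariance principle and so holds for the original system $(\Omega,T,\nu)$, which is the claim.

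Two subsidiary points must be dispatched. First, the variance must be identified and shown to be finite; it is given by the Green--Kubo formula
\[
\sigma^{2}=\nu\big((\Phi-\nu(\Phi))^{2}\big)+2\sum_{n\geq 1}\nu\big((\Phi-\nu(\Phi))\,(\Phi\circ T^{n}-\nu(\Phi))\big),
\]
and the series converges absolutely because the exponential decay of correlations established in our main theorem, applied with $\Phi_{1}=\Phi_{2}=\Phi-\nu(\Phi)\in\C$, bounds its $n$-th term by a constant times $\varsigma^{n}$. Second, one must treat the degenerate case $\sigma=0$: by the usual coboundary dichotomy this occurs precisely when $\Phi-\nu(\Phi)=\psi\circ T-\psi$ for some $\psi\in L^{2}(\nu)$, in which event the Birkhoff sums stay bounded and the limiting law is the point mass at $0$; when $\sigma>0$ the limit is the genuine Gaussian $N(t)$.

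An alternative, self-contained route that does not invoke the invariance principle is the Nagaev--Guivarc'h perturbative spectral method. One twists the transfer operator by the multiplicative weight $e^{is\Phi}$ to obtain operators $\mathcal{L}_{s}$; because $\Phi\in\C$ the assignment $s\mapsto\mathcal{L}_{s}$ is analytic in operator norm near $s=0$, so the spectral gap for $\mathcal{L}=\mathcal{L}_{0}$ persists and $\mathcal{L}_{s}$ acquires a simple leading eigenvalue $\lambda(s)$ depending analytically on $s$ with $\lambda(0)=1$. Differentiating the eigenvalue relation and using the centering $\nu(\Phi)$ yields $\lambda'(0)=0$ and $\lambda''(0)=-\sigma^{2}$, so that
\[
\int e^{\,is\,\big(\sum_{j=0}^{n-1}\Phi\circ T^{j}-n\nu(\Phi)\big)/(\sigma\sqrt{n})}\,d\nu=\lambda\!\left(\frac{s}{\sigma\sqrt{n}}\right)^{\!n}\big(1+o(1)\big)\longrightarrow e^{-s^{2}/2},
\]
whereupon L\'evy's continuity theorem concludes. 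This argument relies precisely on the perturbative stability of the Lasota--Yorke inequality recorded in Theorem \ref{t:LY-perturb}.

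In either approach the substantive work has already been carried out in the preceding theorems --- the spectral gap for $\mathcal{L}$ and the stability of the Lasota--Yorke estimate under perturbation --- so the main obstacle is behind us; the only remaining care concerns the absolute convergence of the Green--Kubo series, which is immediate from the exponential decay of correlations, and the coboundary dichotomy governing the positivity of $\sigma$.
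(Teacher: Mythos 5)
Your primary argument---deducing the CLT directly from the almost sure invariance principle via Slutsky's theorem---is correct and is essentially the paper's own route, since the paper obtains the CLT (along with the LIL and weak invariance principle) as an immediate corollary of the ASIP established through Gou\"ezel's result and Theorem \ref{t:LY-perturb}. Your additional remarks on the Green--Kubo variance, the coboundary dichotomy, and the alternative Nagaev--Guivarc'h spectral route are sound but go beyond what the paper records.
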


A technical result along the way is the following proposition:

\begin{proposition} 
Let $\Phi \in \C$. Then $\LL(\Phi) \in \C$ and  moreover there exists a constant $\theta_1\in (0,1)$ and a constant $C_2>0$ such that
\[
|\LL^n(\Phi)|_\beta \leq |\Phi|_\beta \theta_1^n + C_2 |\Phi|_\infty
\]
\end{proposition}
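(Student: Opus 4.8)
The plan is to establish a Lasota--Yorke type inequality for the transfer operator $\LL$ on the H\"older seminorm $|\cdot|_\beta$. The statement is the classical form of such an inequality: the operator contracts the seminorm up to a uniformly bounded multiple of the sup-norm, and iterating this gives the displayed estimate with $\theta_1^n$ decay plus a bounded remainder. So the real content is the one-step inequality $|\LL(\Phi)|_\beta \le \theta |\Phi|_\beta + C|\Phi|_\infty$ for some $\theta\in(0,1)$; the $n$-step version then follows by a routine induction, summing the geometric series $C(1+\theta+\theta^2+\cdots) \le C/(1-\theta) =: C_2$ and setting $\theta_1 = \theta$.

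The key steps for the one-step inequality: first I would write $\LL(\Phi)(x) = \sum_{y:\,Ty=x} e^{g(y)}\Phi(y)$ (or whatever the precise weighted form of $\LL$ is, built from the potential $g=f+h-f\circ T$ and the coupled map $T$), and estimate $|\LL(\Phi)(x) - \LL(\Phi)(x')|$ for nearby points $x,x'$ in $\Omega$. One pairs up preimages $y\leftrightarrow y'$ via the inverse branches of $T$, and splits the difference into two groups of terms: (i) the variation of $\Phi$ along paired preimages, $|\Phi(y)-\Phi(y')|$, which is controlled by $|\Phi|_\beta$ times $d(y,y')^\beta$, and since the inverse branches of $T$ contract distances by the factor $\eta$ (here $\eta^{-1}>1$ is the minimal expansion, and the coupling contributes a factor controlled by $\epsilon\eta<1$), one gets $d(y,y')^\beta \le (\text{const}\cdot\eta)^\beta d(x,x')^\beta$; (ii) the variation of the weights $e^{g(y)}$ and the distortion terms, which is controlled by $|\Phi|_\infty$ times $d(x,x')^\beta$, using H\"older regularity of $g$ and a bounded-distortion estimate for $T$. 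Dividing by $d(x,x')^\beta$ and taking the supremum over $x\ne x'$ yields $|\LL(\Phi)|_\beta \le \theta_0 |\Phi|_\beta + C|\Phi|_\infty$ where $\theta_0$ is essentially $(\text{const}\cdot\eta)^\beta$ times the normalization of the weight sum. One must check $\theta_0<1$; if a single iterate does not suffice, one passes to a high iterate $T^N$ so that the contraction $\eta^N$ beats the weight-sum constant, proves the inequality for $\LL^N$, and then absorbs the finitely many intermediate iterates into the $|\Phi|_\infty$ term using $\|\LL^k\Phi\|_\infty \le (\text{const})\|\Phi\|_\infty$.

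The main obstacle I expect is making the preimage-pairing and the distortion/weight estimates work uniformly on the infinite-dimensional lattice $\Omega$, rather than just on a single interval map. Because the coupling $E$ has infinite range (with exponentially decaying strength), moving a point $x$ by a small amount in finitely many coordinates can in principle affect the inverse branches in all coordinates; the decay of the coupling strength together with the H\"older metric on $\Omega$ (which weights far-away coordinates less) is what must be invoked to keep the constants finite and independent of how many coordinates are involved. Concretely, the contraction factor on $d(y,y')$ must be shown to be a genuine uniform constant $<\eta^{-1}$-type bound under the condition $\epsilon\eta<1$, and the H\"older seminorm of $g$ composed with inverse branches must be summable over the lattice. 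This is precisely the place where the hypothesis $\epsilon\eta<1$ enters, and verifying the summability/uniformity is the technical heart of the argument; the rest is bookkeeping and the geometric-series induction.
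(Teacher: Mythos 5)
Your overall scheme is sound and its second half coincides with the paper's: a one-step inequality $|\LL\Phi|_\beta\le\theta|\Phi|_\beta+C|\Phi|_\infty$ with $\theta<1$, followed by induction using $|\LL\Phi|_\infty\le|\Phi|_\infty$ and summation of the geometric series. Where you diverge is in how the one-step estimate is obtained. You propose to treat $\LL$ as a weighted sum over preimages of the coupled map $T$ and to redo the pairing-of-branches and distortion analysis directly for $T$, worrying about uniformity over the infinite lattice and possibly passing to a high iterate $T^N$. The paper instead exploits the structural identity $\LL\Phi=(L\Phi)\circ E^{-1}$ together with Assumption (E1), which gives $d(E^{-1}x,E^{-1}y)\le C_E\,d(x,y)$ with $C_E\eta<1$; the coupled estimate then follows in three lines from the uncoupled Lasota--Yorke inequality $|L\Phi|_\beta\le\eta^\beta|\Phi|_\beta+C_5|\Phi|_\infty$ (already proved via the finite-lattice operators $L_k$ and the distortion lemma for $\bar\tau$), yielding $\theta_1=(C_E\eta)^\beta$ with no separate distortion argument for $T$. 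Your route is workable -- inverse branches of $T$ contract by $C_E\eta<1$ and $|g|_\beta<\infty$, so the distortion series along $T$-backward orbits converges -- but it duplicates the uncoupled analysis and requires extra care on two points you gloss over: the operator is only defined as a pointwise limit of the finite-dimensional approximations $\mathcal{L}_k=L_k(\cdot)\circ E^{-1}$ (with normalization $b_k$), so the ``sum over $Ty=x$'' and the branch pairing must be carried out at the level of these approximations before passing to the limit; and since the normalized operator satisfies $L\mathds1=\mathds1$, the weight sum does not inflate the contraction factor, so your fallback to a high iterate $T^N$ and your summability concerns about an infinite-range coupling are unnecessary -- all the uniformity you need is packaged in (E1). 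In short: the paper's composition trick buys economy (the lattice-approximation and distortion machinery is used exactly once, for $\bar\tau$), while your direct approach is self-contained with respect to $T$ but heavier, and correct only once the limit/normalization bookkeeping is made explicit.
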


The proposition is used to prove the existence of the invariant probability measure
$\nu$ which satisfies $\mathcal{L}^* \nu = \nu$. This result is proved as Theorem \ref{t:LY-perturb}. As a corollary to this theorem, with suitable modifications to the classical techniques we prove that the essential spectral radius of $\mathcal{L}$ is strictly smaller than the spectral radius of $\mathcal{L}$ (which is 1). 

Finally, we make a note regarding constants. We denote `global'  constants by $C_1, C_2,\dots$ throughout the 
paper and `local' constants by $c_1,c_2,\dots$ for each lemma or theorem.  The constant $C_E$
tunes the strength of interactions between the maps on the lattice.

\section{The uncoupled system.}

First, we need to define what the admissible class of observables is.
In order to study observables (and potentials) on an infinite lattice, we need to define how well approximated these observables are by restrictions to finite sub-lattices. Let $\Lambda = \Z$ and $\Omega=I^\mathbb{Z}$ 
the lattice space where $I$ is the unit interval. Define
\[
\Lambda_k = (-k, -k+1, \dots, -1, 0, 1, \dots, k-1, k).
\] 
Assume the following:

\subsection{Assumptions on the metric}
\begin{enumerate}[($d$1)]
\item Let $d_I:I\to\R^+$ be a metric on the unit interval $I$ and pick $\theta\in (0,1)$. Define a metric $d$
on $\Omega$ by
\begin{equation}
d(x, y) = \sup_{k\in\mathbb{Z}} \theta^{|k|} d_I(x_k, y_k).
\end{equation}
where $x=(\dots,x_{-1},x_0,x_1,x_2,\dots), y=(\dots,y_{-1},y_0,y_1,y_2,\dots)$ are points in $\Omega$.
\end{enumerate}


\subsection{Assumptions on the map on $I$}

\noindent The global map on $\Omega$ is thought to be composed of individual maps on the nodes $I$. We list below the conditions that our map satisfies (we note that these are not the most general conditions under which our theorem can be stated, but in order to keep our reasoning transparent, we do not pursue a more general setting).

\begin{enumerate}[($\tau$1)]
\item\label{tau1} The map $\tau:I\to I$ has full branches. As a consequence, for each $x\in I$, $b=\#\set{\tau^{-1}x}$ is constant. This also implies that the map $\tau$ has at least one fixed point. Denote one such fixed point by $p_\tau.$

\item\label{tau2} The map $\tau:I\to I$ is expanding. This is taken to mean that there exists a constant $\eta\in (0,1)$
such that for every inverse branch $\zeta$ of $\tau$ one has $d(\zeta_x, \zeta_y)\leq \eta d(x, y)$
($\zeta_x,\zeta_y$ are the images of $x,y$ under $\zeta$, i.e. $\tau\zeta_x=x,\tau\zeta_y=y$).
\end{enumerate}

Define $\ii_k: \Omega_k\to\Lambda$ as 
$\ii_k(x) = 
\begin{cases}x_i &\text{ if $|i|\leq k$}\\
 p_\tau &\text{otherwise} \end{cases}$ and put $\pi_k:\Lambda\rightarrow\Lambda$ for 
 the ``projection'' which is given by $\pi_k x=\ii_k(x|_{\Omega_k})$
 where $\Omega_k=I^{\Lambda_k}$.
 
 For $\Phi\in C(\Omega)$ we define the ``restriction'' to $\Omega_k $ as 
\[
\Phi_k = \Phi\circ\pi_k
\] 
that is $\Phi_k(x) = \Phi\circ \ii_k(x|_{\Omega_k})$. A consequence of the fact that outside of the lattice 
$\Omega_k $ we have chosen $\pi_k(x)$ to be equal to $p_\tau$ is that
\[
\bar{\tau}^{i} \pi_k(x) = \pi_k\bar\tau^{i}(x).
\]

\subsection{Definition of the function space $\mathcal{C}$}
For $\Phi\in C(\Omega)$ we define the  H\"older constant
$$
|\Phi|_\beta:= \sup_{k\in \mathbb{N}} |\Phi_k|_\beta
$$
where
\[
|\Phi_k|_\beta:= \sup_{x, y\in\Omega}\frac{|\Phi_k( x) - \Phi_k(y)|}{d (x, y)^\beta}.
\] 
Then 
$$
\|\cdot\|=|\cdot|_\infty+|\cdot|_\beta
$$
defines a norm and we define the function space 
$$
\mathcal{C}_\beta=\left\{\Phi\in C(\Omega): \|\Phi\| < \infty\right\}.
$$
We will also sometimes need the variation semi-norm for some $\alpha\in(0,1)$ given by
\[
V_\alpha(\Phi):= \sup_{k\in\mathbb{N}} \frac{\var_k(\Phi)}{\alpha^k}
\]
where
$$
\var_k(\Phi)=\sup_{x\in \Omega} \left| \Phi(x) - \Phi_k(x)\right|
$$
is the {\em $k$th variation} of $\Phi$. If  $\alpha \geq \theta^\beta$ then $V_\alpha(\Phi) \leq |\Phi|_\beta$ 
for all $\Phi\in\mathcal{C}_\beta$. Therefore, in what follows, we fix some $\alpha \geq \theta^\beta$ and, instead of writing $\mathcal{C}_\beta$, we only write $\mathcal{C}$.

\subsection{Defining the positive operators $P$ and $L$}


Let $f\in\mathcal{C}$ be a potential function and define 
for the finite sub-lattice $\Lambda_k$ the transfer operator $P_k$ for $\bar\tau|_{\Omega_k}$ by
\[
P_k(\Phi)(x) =\frac1{b_k}\sum_{|\zeta|=k} \exp(f(\ii_k \zeta_x)) \Phi(\ii_k\zeta_x)
\]
$x\in\Omega$, where the summation is over inverse branches $\zeta$ of $\bar\tau|_{\Omega_k}$, i.e.\
$\bar\tau\circ\zeta$ is the identity and moreover $\zeta_x$ the image of $x$ under $\zeta$ has 
the property that $(\zeta_x)_j=x_j$ for $|j|>k$. For the normalising factor one has
$b_k= \#\set{\bar{\tau}^{-1}x|_{\Omega_k}}=b^{2k+1} $ as  $\#\set{\tau^{-1}x}= b$.

Clearly, $P_k$ is a well defined, positive, bounded linear operator on functions on $\Omega$.
The following lemma serves to define the transfer operator $P$ by taking a limit $k$ to infinity.

\begin{lemma}\label{PkCauchy}
Let $f:\Omega\rightarrow\R$ such that $|f|_\infty+V_\alpha(f) < \infty$. Then\\
(I) $P_k$ is uniformly (in $k$) bounded.\\
(II) For $\Phi$ such that $|\Phi|_\infty+V_\alpha(\Phi)<\infty$ the sequence $(P_k(\Phi))$ is Cauchy for each $x\in\Omega$.\\
(III) $V_\alpha(P_k\Phi)\le C_3|P\Phi|_\infty\;\;\forall k$ for some constant $C_3$.
\end{lemma}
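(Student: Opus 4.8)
The plan is to prove the three parts of Lemma~\ref{PkCauchy} in order, using the H\"older/variation control on $f$ together with the expansion hypothesis $(\tau\ref{tau2})$, exploiting crucially the full-branch structure $(\tau\ref{tau1})$ so that inverse branches of $\bar\tau|_{\Omega_k}$ are parametrized independently of the base point and compose nicely under iteration.

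\textbf{Part (I): uniform boundedness.} First I would estimate $|P_k(\Phi)|_\infty$ directly. Writing $P_k(\Phi)(x)=b_k^{-1}\sum_{|\zeta|=k}\exp(f(\ii_k\zeta_x))\Phi(\ii_k\zeta_x)$ and bounding each term by $e^{|f|_\infty}|\Phi|_\infty$, the $b_k$ branches and the normalization $b_k^{-1}$ cancel, giving $|P_k\Phi|_\infty\le e^{|f|_\infty}|\Phi|_\infty$, uniformly in $k$. The point is that the normalization was chosen precisely so that the operator norm on $(C(\Omega),|\cdot|_\infty)$ is controlled by $e^{|f|_\infty}$ with no growth in $k$. (One could also phrase this as: $P_k$ applied to the constant function $1$ is bounded by $e^{|f|_\infty}$, and $P_k$ is a positive operator, so its sup-norm equals $|P_k 1|_\infty$.)

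\textbf{Part (II): the Cauchy property.} Here I fix $x\in\Omega$ and compare $P_k(\Phi)(x)$ with $P_{k+1}(\Phi)(x)$ (and then iterate the estimate / sum a geometric series to get it for $P_k$ vs.\ $P_m$, $m>k$). The inverse branches of $\bar\tau|_{\Omega_{k+1}}$ refine those of $\bar\tau|_{\Omega_k}$: each $\zeta$ with $|\zeta|=k$ corresponds to exactly $b^2$ branches with $|\zeta'|=k+1$ (the two extra coordinates $\pm(k+1)$ each get $b$ preimage choices), and after averaging, the difference between the $k$ and $k{+}1$ sums is governed by (a) the discrepancy between $\ii_k\zeta_x$ and $\ii_{k+1}\zeta'_x$, which differ only in coordinates of index $\ge k+1$, hence are $d$-close to order $\theta^{k}$, and (b) the modulus of continuity of $f$ and $\Phi$ on that scale, i.e.\ terms of size $\lesssim (|f|_\beta + V_\alpha(f))\,\theta^{\beta k}$ and $V_\alpha(\Phi)\,\alpha^{k}$ or $|\Phi|_\beta\theta^{\beta k}$, times $e^{|f|_\infty}|\Phi|_\infty$. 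Summing over $m>k$ these geometric tails go to $0$, so $(P_k(\Phi)(x))_k$ is Cauchy in $\R$; since $\R$ is complete, the pointwise limit $P(\Phi)(x):=\lim_k P_k(\Phi)(x)$ exists, which is the intended definition of $P$.

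\textbf{Part (III): variation of $P_k\Phi$.} I want to bound $\var_j(P_k\Phi)=\sup_x|P_k\Phi(x)-(P_k\Phi)_j(x)|$ by $C_3\alpha^j|P\Phi|_\infty$. Note $(P_k\Phi)_j(x)=P_k\Phi(\pi_j x)$, and since $\bar\tau^i$ commutes with $\pi_j$, the inverse branches for the point $x$ and for $\pi_j x$ match up: the corresponding preimages $\ii_k\zeta_x$ and $\ii_k\zeta_{\pi_j x}$ agree on all coordinates of index $\le j$ that survive the projection and, by expansion $(\tau\ref{tau2})$, are $d$-close of order (roughly) $\eta^{k}$ on the coordinates being contracted and of order $\theta^{j}$ on the tail coordinates beyond index $j$. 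Feeding this into the modulus of continuity of $e^f$ and $\Phi$ and averaging over the $b_k$ branches gives $\var_j(P_k\Phi)\lesssim e^{|f|_\infty}(|f|_\beta + V_\alpha(f))(\theta^{\beta j}+\cdots)|\Phi|_\infty$; dividing by $\alpha^j$ and taking $\sup_j$ (using $\alpha\ge\theta^\beta$) yields the claim with $C_3$ independent of $k$. The slightly delicate point is getting the bound in terms of $|P\Phi|_\infty$ rather than $|\Phi|_\infty$: one uses that $|P\Phi|_\infty$ and $|\Phi|_\infty$ are comparable up to $e^{\pm|f|_\infty}$ on the relevant range (positivity of $P_k$ again, comparing with $P_k$ applied to the constant $|\Phi|_\infty$), or simply absorbs the constant.

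\textbf{Main obstacle.} The genuinely fiddly step is Part~(II): bookkeeping the combinatorics of how the inverse branches of $\bar\tau|_{\Omega_{k+1}}$ project onto those of $\bar\tau|_{\Omega_k}$, and showing the two extra ``boundary'' coordinates contribute only a $\theta^{\beta k}$-small perturbation after normalization. Everything hinges on the geometric decay coming from the metric weights $\theta^{|k|}$ in $(d1)$ combined with the $\beta$-H\"older control on $f$ and $\Phi$; once that estimate is clean, (I) and (III) are comparatively routine, with (III) needing only the extra observation that expansion $(\tau\ref{tau2})$ supplies a second, independent source of contraction ($\eta^k$) in the non-tail directions so the variation estimate is uniform in $k$.
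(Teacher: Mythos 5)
Your parts (I) and (II) follow essentially the paper's own route: the trivial bound $e^{|f|_\infty}|\Phi|_\infty$ for (I), and for (II) the refinement of inverse branches ($b^{2}$ new branches per step, normalizations cancelling) together with the observation that $i_{k+1}\zeta'_x$ and $i_k\zeta_x$ differ only in coordinates of index beyond $k$, so the discrepancy is controlled by $\var_k(f)\le V_\alpha(f)\alpha^k$ and $\var_k(\Phi)\le V_\alpha(\Phi)\alpha^k$; summing the geometric tail gives the Cauchy property. (The paper compares general $k_1<k_2$ in one step rather than telescoping, but that is the same argument. Do note that under the lemma's hypotheses only $V_\alpha$-regularity is available, so the control must go through the variation seminorm, not through $d$-closeness plus a H\"older modulus of continuity as some of your phrasing suggests; your additive splitting of the difference is in fact cleaner than the paper's ratio manipulation, which tacitly assumes $\Phi>0$.)

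Part (III) is where there is a genuine gap. Your direct estimate of $\var_j(P_k\Phi)=\sup_x|P_k\Phi(x)-P_k\Phi(\pi_j x)|$ naturally produces a bound proportional to $|\Phi|_\infty$ (times $e^{|f|_\infty}$ and variation constants), and your two proposed ways of converting this into the stated bound $C_3|P\Phi|_\infty$ do not work: $|P\Phi|_\infty$ and $|\Phi|_\infty$ are \emph{not} comparable up to $e^{\pm|f|_\infty}$. Positivity of $P$ only gives the one-sided inequality $|P\Phi|_\infty\le e^{|f|_\infty}|\Phi|_\infty$; since $P$ averages over $b_k\to\infty$ preimages, $|P\Phi|_\infty$ can be far smaller than $|\Phi|_\infty$ (sign cancellations between branches, or a tall narrow spike with $V_\alpha$ finite), so there is no constant to ``absorb''. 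The paper sidesteps this issue by extracting (III) from the very inequality proved in (II): there the difference $P_{k_2}\Phi(x)-P_{k_1}\Phi(x)$ is factored as $P_{k_1}(\cdot)(x)$ times a factor of size $O(\alpha^{k_1})$, so the bound comes out proportional to $P_{k_1}\Phi(x)$, hence to $|P\Phi|_\infty$ up to a constant; since $P_j\Phi$ depends only on the coordinates in $\Lambda_j$ (so $P_j\Phi(x)=P_j\Phi(\pi_jx)$), the $j$th variation of $P_k\Phi$ is controlled by two such Cauchy defects, giving $\var_j(P_k\Phi)\lesssim |P\Phi|_\infty\,\alpha^j$ uniformly in $k$. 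To repair your argument you would either need to reproduce this multiplicative structure, or settle for a bound with $|\Phi|_\infty$ on the right, which is not the statement claimed. A secondary repairable point: your comparison of the preimages of $x$ and of $\pi_jx$ should also be run through variations (both points have the same $\pi_j$-projection, so two applications of $\var_j$ suffice) rather than through $\eta^k$/$\theta^j$ metric closeness plus H\"older continuity, which the lemma does not assume.
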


\begin{proof}
(I) Clearly  $|P_k\Phi|_\infty\le e^{|f|_\infty} |\Phi|_\infty$, which implies $P_k$ is bounded uniformly in $k$.

(II) For $k_1 < k_2$ one has 
\begin{eqnarray*}
|P_{k_2}(\Phi)( x) - P_{k_1}(\Phi)( x)| & = & \left|
\sum_{|\zeta'|=k_2} \frac{\exp(f \zeta'_x)\Phi(\zeta'_x)}{b_{k_2}} 
- \sum_{|\zeta|=k_1}\frac{ \exp( f \zeta_x) \Phi(\zeta_x)}{b_{k_1}}
\right|
\end{eqnarray*}
where $\zeta'$ are inverse branches in $\bar\tau^{-1}|_{\Omega_{k_2}}$ and 
 $\zeta$ are inverse branches in $\bar\tau^{-1}|_{\Omega_{k_1}}$.

The lattice $\Lambda_{k_2}$ contains $2(k_2 - k_1)$ elements more than $\Lambda_{k_1}$, and so 
$b_{k_2} = b^{2(k_2 - k_1)}b_{k_1}$. Therefore the above sum simplifies as
\begin{eqnarray}
\lefteqn{\left|
\sum_{|\zeta'|=k_2} \frac{\exp( f \zeta'_x)\Phi(\zeta'_x)}{b_{k_2}} 
-\sum_{|\zeta=k_1} \frac{ \exp( f \zeta_x) \Phi(\zeta_x)}{b_{k_1}}
\right|} &&\nonumber\\
& \leq &
\sum_{|\zeta|=k_1}\left[
\frac{\left(
\sum_{\zeta'|_{\Omega_{k_1}} = \zeta} \exp(f \zeta'_x) \Phi(\zeta'_x)
\right)}{b_{k_2}} - \frac{\exp(f \zeta_x)\Phi(\zeta_x)}{b_{k_1}}
\right]\nonumber\\
& = &\sum_{|\zeta|=k_1}\frac{\exp( f \zeta_x)\Phi(\zeta_x)}{b^{2k_1+1}}
\left[
\left(
\sum_{\zeta'|_{\Omega_{k_1}} = \zeta} \exp(f\zeta'_x -  f \zeta_x) \frac{\Phi(\zeta'_x)}{\Phi(\zeta_x)}
\right)\frac{b^{2k_1+1}}{b^{2k_2+1}} - 1
\right]
\end{eqnarray}

Since  $\exp(f\zeta'_x - f\zeta_x)\le e^{V_\alpha(f)}\leq 1+V_\alpha(f) \alpha^{k_1} + o(\alpha^{k_1})$ and 
\[
\left|
\frac{\Phi(\zeta'_x)}{\Phi(\zeta_x)} -1
\right| \leq \Phi(\zeta_x)V_\alpha(\Phi) \alpha^{k_1} \leq |\Phi|_\infty V_\alpha(\Phi) \alpha^{k_1}.
\] 
Therefore (for some $c_1$)
\[
\left(
\sum_{\zeta'|_{\Omega_{k_1}} = \zeta} \exp(f\zeta'_x - f \zeta_x) \frac{\Phi(\zeta'_x)}{\Phi(\zeta_x)}
\right)  = \sum_{\zeta'|_{\Omega_{k_1}} = \zeta}\left(1+ c_1\alpha^{k_1} +o(\alpha^{k_1})\right)  = b^{2k_2 - 2k_1}(1+c_1\alpha^{k_1} + o(\alpha^{k_1}))
\] from where it follows that
\[
\left[
\left(
\sum_{\zeta'|_{\Omega_{k_1}} = \zeta_x} \exp(f\zeta'_x - f \zeta_x) \frac{\Phi(\zeta'_x)}{\Phi(\zeta_x)}
\right)\frac{b^{2k_1+1}}{b^{2k_2+1}} - 1
\right] 
\leq
c_1 \alpha^{k_1} + o(\alpha^{k_1}).
\]
Therefore, 
\[
|P_{k_2}(\Phi)( x) - P_{k_1}(\Phi)(x)| \leq c_2 |P_{k_1} \Phi|_\infty \alpha^{k_1}\le c_3\alpha^k
\]
for some constants $c_2,c_3$ as by part~(I) $ |P_{k_1} \Phi|_\infty$ is uniformly bounded.

(III) This follows from the first inequality in the last line of estimates.
\end{proof}

Since by Lemma \ref{PkCauchy}, for each $x\in\Omega$, the sequence $P_k(\Phi)(x)$ is a Cauchy sequence (of real numbers) we now define the operator $P$ for the infinite lattice system as the pointwise limit:
\[
P(\Phi)(x) = \lim_{k\to\infty} P_k(\Phi)(x).
\]

\begin{lemma}
$P$ is a non-negative and continuous operator on $\mathcal{C}$.
\end{lemma}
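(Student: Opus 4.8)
The plan is to establish three facts: $P$ is linear, which is immediate since it is the pointwise limit of the linear operators $P_k$; $P$ preserves the cone of non-negative functions; and $P$ is a bounded operator of $\mathcal{C}$ into itself. Non-negativity is the quickest point: the weights $\exp(f(\ii_k\zeta_x))$ are strictly positive, so $\Phi\ge 0$ forces $P_k(\Phi)\ge 0$ for every $k$, and hence $P(\Phi)=\lim_k P_k(\Phi)\ge 0$.

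To see that $P(\Phi)$ is continuous on $\Omega$, I would note that the Cauchy estimate in Lemma~\ref{PkCauchy}(II) is in fact uniform in $x$: the bound $|P_{k_2}(\Phi)(x)-P_{k_1}(\Phi)(x)|\le c_3\alpha^{k_1}$ produced there has $c_3$ depending only on $|f|_\infty$, $V_\alpha(f)$, $|\Phi|_\infty$ and $V_\alpha(\Phi)$, all of which are finite for $\Phi\in\mathcal{C}$. Hence $P_k(\Phi)\to P(\Phi)$ uniformly on $\Omega$. Since each $P_k(\Phi)$ is a finite sum of continuous functions — the coordinatewise inverse branches of $\bar\tau|_{\Omega_k}$ are contractions by~($\tau$2), hence continuous, and $f$ and $\Phi$ are continuous — the uniform limit $P(\Phi)$ is continuous.

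For the norm bound I would prove a Lasota--Yorke inequality for the finite operators $P_k$, uniformly in $k$. The sup-norm part follows from Lemma~\ref{PkCauchy}(I), which gives $|P(\Phi)|_\infty\le e^{|f|_\infty}|\Phi|_\infty$ on passing to the pointwise limit. For the H\"older part, fix $x,y\in\Omega$ and an inverse branch $\zeta$ of $\bar\tau|_{\Omega_k}$, and split $\exp(f(\ii_k\zeta_x))\Phi(\ii_k\zeta_x)-\exp(f(\ii_k\zeta_y))\Phi(\ii_k\zeta_y)$ in the usual telescoping way, as $\exp(f(\ii_k\zeta_x))$ times $\Phi(\ii_k\zeta_x)-\Phi(\ii_k\zeta_y)$ plus $\Phi(\ii_k\zeta_y)$ times $\exp(f(\ii_k\zeta_x))-\exp(f(\ii_k\zeta_y))$. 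Since $\ii_k\zeta_x$ already equals $\pi_k(\ii_k\zeta_x)$ (its coordinates outside $\Lambda_k$ sit at the fixed point $p_\tau$), one may bound $|\Phi(\ii_k\zeta_x)-\Phi(\ii_k\zeta_y)|\le|\Phi|_\beta\,d(\ii_k\zeta_x,\ii_k\zeta_y)^\beta$ and $|f(\ii_k\zeta_x)-f(\ii_k\zeta_y)|\le|f|_\beta\,d(\ii_k\zeta_x,\ii_k\zeta_y)^\beta$, while~($\tau$2) yields $d(\ii_k\zeta_x,\ii_k\zeta_y)\le\eta\,d(x,y)$ because the contraction acts only on the $\Lambda_k$-coordinates. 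Averaging the resulting estimate over the $b_k$ branches gives
\[
|P_k(\Phi)(x)-P_k(\Phi)(y)|\le e^{|f|_\infty}\eta^\beta\bigl(|\Phi|_\beta+|f|_\beta|\Phi|_\infty\bigr)d(x,y)^\beta
\]
for every $k$. Letting $k\to\infty$ shows $P(\Phi)$ is globally $\beta$-H\"older with the same constant; and since $d(\pi_k x,\pi_k y)\le d(x,y)$ for all $k$, this global bound dominates $|(P(\Phi))_k|_\beta$ for each $k$, so that $|P(\Phi)|_\beta\le e^{|f|_\infty}\eta^\beta(|\Phi|_\beta+|f|_\beta|\Phi|_\infty)$. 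Combining the two estimates yields $\|P(\Phi)\|\le C\|\Phi\|$ for a constant $C$ depending only on $f$, so $P\colon\mathcal{C}\to\mathcal{C}$ is a bounded, hence continuous, operator.

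The computation is the standard transfer-operator (Lasota--Yorke) estimate for a full-branch expanding map, so I do not expect it to be the real difficulty; the point I would be most careful to make explicit is the bookkeeping that reconciles the several H\"older seminorms in play — the defined $|\cdot|_\beta=\sup_k|\cdot_k|_\beta$, the genuine global $\beta$-H\"older constant, and the seminorms $|\cdot_k|_\beta$ of the finite restrictions — which goes through precisely because $\pi_k$ is $1$-Lipschitz and freezes the complementary coordinates at $p_\tau$, and because every point of the form $\ii_k\zeta_x$ is already fixed by $\pi_k$.
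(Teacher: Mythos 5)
Your proposal is correct and follows essentially the same route as the paper: non-negativity is inherited from the positive approximations $P_k$, the sup-norm bound is $|P\Phi|_\infty\le e^{|f|_\infty}|\Phi|_\infty$, and the H\"older seminorm is controlled by the same telescoping split of $e^{f(\ii_k\zeta_x)}\Phi(\ii_k\zeta_x)-e^{f(\ii_k\zeta_y)}\Phi(\ii_k\zeta_y)$ together with the contraction of inverse branches, yielding $\|P\Phi\|\le c\,\|\Phi\|$. Your extra care with the uniform convergence $P_k\Phi\to P\Phi$ and with reconciling the seminorm $|\cdot|_\beta=\sup_k|\cdot_k|_\beta$ with the genuine global H\"older constant (via $\pi_k$ being $1$-Lipschitz and $\ii_k\zeta_x$ being fixed by $\pi_k$) only makes explicit details the paper leaves implicit.
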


\begin{proof} Clearly $P$ is non-negative as the approximations $P_k$ are non-negative.
Since $P$ is a linear operator, it is enough to show continuity at the origin.
For  $\Phi\in\mathcal{C}$ we see that $P$ is a bounded operator in the $|\cdot|_\infty$-norm as
\[
|P(\Phi)| \leq e^{|f|_\infty}|\Phi|_\infty.
\]
Now let $x,y\in\Omega$, then
\begin{eqnarray*}
|P\Phi(x)-P\Phi(y)|&\le&\lim_k\frac1{b_k}\sum_{|\zeta|=k}\left|
 e^{f(i_k \zeta_x)} \Phi (i_k \zeta_x) -e^{f(i_k \zeta_y)} \Phi (i_k \zeta_y) \right|\\
 &\le&\lim_k\frac1{b_k}\left(\sum_\zeta
 e^{f(i_k \zeta_x)}| \Phi (i_k \zeta_x) - \Phi (i_k \zeta_y)|
+|\Phi|_\infty\sum_\zeta e^{f(i_k \zeta_y)}\left|  e^{f(i_k \zeta_y)-f(i_k \zeta_y)}-1\right|\right)\\
 &\le&e^{|f|_\infty}|\Phi|_\beta d(x,y)^\beta+c_1|\Phi|_\infty |f|_\beta d(x,y)^\beta
\end{eqnarray*}
for some $c_1$ as $d(i_k\zeta_x,i_k\zeta_y)\le c_2d(x,y)$ ($c_2>0$).
 Hence $|P\Phi|_\beta \le c_3(|\Phi|_\beta +|\Phi|_\infty)=c_3\|\Phi\|$
for some constant $c_3$ which is independent of $\Phi$.
\end{proof}

Note that the space $\Omega$ with the metric $d$ is convex, compact and separable. Separability 
follows from the fact that for every $k\in\mathbb{N}$ there are finitely many points 
in $\ii_k \Omega_k $ that are $\vartheta^k$-dense in $\Omega$ for any $\vartheta\in(0,1)$.
In this way one produces a countable dense set in $\Omega$. 
This implies that the set $\mathscr{M}$ of probability measures on $\Omega$ is 
compact in the weak* topology. Thus following~\cite{MR2423393} we can define an operator
$\mathcal{M}:\mathscr{M}\rightarrow\mathscr{M}$ by $\mathcal{M}\nu=\frac{P^*\nu}{P^*\nu(\mathds1)}$
where $\nu\in\mathscr{M}$. By the theorem of Schauder-Tychonoff $\mathcal{M}$ has 
a fixed point $\nu$ in $\mathscr{M}$. Thus $P^*\nu=\lambda\nu$, where $\lambda=P^*\nu(\mathds1)$.

\begin{defn}
Let $f\in\mathcal{C}$ and put $B(z) = \exp\left({|f|_\beta \frac{\eta^\beta}{1 -\eta^\beta} z^\beta}\right), z\ge0$.
Define the function set
\[
\Delta_{f}:= \set{\Phi\in C(\Omega): \Phi\ge0, \nu(\Phi)=1, \Phi(x) \leq B(d(x, y)) \Phi(y)\forall x,y\in\Omega}.
\] 
\end{defn}

Notice that $B(\eta z)e^{|f|_\beta \eta^\beta z^\beta}=B(z)$.

\begin{lemma} \label{delta-is-in-C}
$\Delta_f \subset \mathcal{C}.$
\end{lemma}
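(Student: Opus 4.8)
The plan is to show that every \(\Phi \in \Delta_f\) has finite \(|\cdot|_\infty\) and finite \(|\cdot|_\beta\), hence lies in \(\mathcal{C}\). The boundedness in sup-norm is immediate: since \(\nu(\Phi) = 1\) and \(\Phi \geq 0\), there exists a point (or a set of positive \(\nu\)-measure) where \(\Phi\) is at most \(1\); then the defining inequality \(\Phi(x) \leq B(d(x,y))\Phi(y)\), combined with the fact that \(\Omega\) has finite diameter \(D := \sup_{x,y} d(x,y) < \infty\) (the metric \(d\) is bounded because \(d_I\) is a metric on the compact interval \(I\) and the weights \(\theta^{|k|}\) are summable-dominated), gives \(|\Phi|_\infty \leq B(D) < \infty\). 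More carefully, integrating the inequality \(\Phi(x) \leq B(d(x,y))\Phi(y) \leq B(D)\Phi(y)\) against \(\nu(dy)\) yields \(\Phi(x) \leq B(D)\nu(\Phi) = B(D)\), so \(|\Phi|_\infty \leq B(D)\).

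Next I would bound \(|\Phi|_\beta = \sup_k |\Phi_k|_\beta\). The key observation is that the distortion bound propagates to the restrictions \(\Phi_k = \Phi \circ \pi_k\): for any \(x, y \in \Omega\), the projected points \(\pi_k x\) and \(\pi_k y\) satisfy \(d(\pi_k x, \pi_k y) \leq d(x,y)\) (projection does not increase the metric, since it only replaces some coordinates by the common value \(p_\tau\), and those coordinates then contribute \(0\) to the sup defining \(d\)). Therefore
\[
|\Phi_k(x) - \Phi_k(y)| = |\Phi(\pi_k x) - \Phi(\pi_k y)| \leq \big(B(d(\pi_k x, \pi_k y)) - 1\big)\Phi(\pi_k y) \vee \big(1 - B(d(\pi_k x,\pi_k y))^{-1}\big)\Phi(\pi_k y),
\]
where I use the two-sided consequence of \(\Phi(u) \leq B(d(u,v))\Phi(v)\) (apply it both ways). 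Using \(|\Phi_k|_\infty \leq |\Phi|_\infty \leq B(D)\) and the elementary estimate \(B(z) - 1 \leq C z^\beta\) for \(z\) in the bounded range \([0,D]\) — which holds because \(B(z) = \exp(c\, z^\beta)\) with \(c = |f|_\beta \eta^\beta/(1-\eta^\beta)\), so \(B(z) - 1 = e^{c z^\beta} - 1 \leq e^{cD^\beta} c\, z^\beta\) — I get \(|\Phi_k(x) - \Phi_k(y)| \leq B(D)\, e^{cD^\beta} c\, d(x,y)^\beta\). Taking the supremum over \(x,y\) and then over \(k\) gives \(|\Phi|_\beta \leq B(D) e^{cD^\beta} c < \infty\), uniformly in \(\Phi \in \Delta_f\). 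Combining the two bounds, \(\|\Phi\| = |\Phi|_\infty + |\Phi|_\beta < \infty\), so \(\Phi \in \mathcal{C}\).

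The main obstacle, and the only place requiring care, is making precise the claim that \(d(\pi_k x, \pi_k y) \leq d(x,y)\) and that \(\Omega\) has finite diameter under \(d\); both rest on the specific form \(d(x,y) = \sup_k \theta^{|k|} d_I(x_k, y_k)\) and the boundedness of \(d_I\) on \(I\). Once these geometric facts are in hand, the argument is just a two-sided reading of the Harnack-type inequality defining \(\Delta_f\) together with the convexity estimate \(e^t - 1 \leq t e^t\) applied on a compact parameter range. I expect no serious difficulty, but one should state explicitly that the bounds obtained are uniform over \(\Delta_f\), since that uniformity (in particular \(\sup_{\Phi \in \Delta_f}\|\Phi\| < \infty\)) is what will be used later when \(\Delta_f\) is shown to be a compact convex set preserved by the normalized transfer operator.
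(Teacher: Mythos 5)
Your proof is correct and follows essentially the same route as the paper: the defining Harnack-type inequality together with $\nu(\Phi)=1$ and the bounded diameter of $\Omega$ gives the sup-norm bound, and the estimate $B(z)-1\le c\,e^{cD^\beta}z^\beta$ gives the H\"older bound. The only difference is that you explicitly check the bound passes to the restrictions $\Phi_k$ via $d(\pi_k x,\pi_k y)\le d(x,y)$ (and note uniformity over $\Delta_f$), points the paper's proof leaves implicit.
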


\begin{proof} For $\Phi\in\Delta_f$ one has $\Phi(x)/\Phi(y)  \leq B(d(x,y))$ and $\Phi(y)/\Phi(x) \leq B(d(x, y))$
which implies
\[
|\Phi(x) - \Phi(y)|= \left|
\frac{\Phi(x)}{\Phi(y)} -1
\right| |\Phi(y)| \leq |\Phi(y)| |B(d(x,y))-1| \leq |\Phi|_\infty \left[
d(x, y)^\beta |f|_\beta  \frac{\eta^\beta}{1 -\eta^\beta}+ o(d(x, y)^\beta)
\right]
\] 
where $|\Phi|_\infty\le B(1)$ as the diameter of $\Omega$ is equal to $1$ and $\nu(\Phi)=1$
(i.e. $\inf \Phi\le1$).
This implies that $|\Phi|_\beta  \leq |f|_\beta \left(\frac{\eta^\beta}{1 -\eta^\beta}+c_1\right) < \infty$
for some $c_1$. Hence $\|\Phi\|<\infty$.
\end{proof}

In order to apply the theorem of Schauder-Tychonoff we must first show that the operator
 $\frac{1}{\lambda} P$ maps $\Delta_f$ into itself.

\begin{lemma}
$\frac{1}{\lambda} P$ maps $\Delta_f$ into itself.
\end{lemma}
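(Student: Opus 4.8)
The plan is to verify the three defining conditions of $\Delta_f$ for $\frac1\lambda P\Phi$ when $\Phi\in\Delta_f$: non-negativity, the normalization $\nu(\frac1\lambda P\Phi)=1$, and the cone inequality $\frac1\lambda P\Phi(x)\le B(d(x,y))\,\frac1\lambda P\Phi(y)$. Non-negativity is immediate since $P$ is a non-negative operator and $\lambda>0$. The normalization is equally immediate from the fixed point relation $P^*\nu=\lambda\nu$ established above via Schauder--Tychonoff: indeed $\nu(\frac1\lambda P\Phi)=\frac1\lambda(P^*\nu)(\Phi)=\nu(\Phi)=1$.

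The substance of the lemma is therefore the cone inequality, and I would first prove it at the level of the finite approximations $P_k$ and then pass to the limit $k\to\infty$ using the pointwise convergence $P_k\Phi\to P\Phi$ from Lemma~\ref{PkCauchy}. Fix $x,y\in\Omega$ and an inverse branch $\zeta$ of $\bar\tau|_{\Omega_k}$. The key geometric input is the expansion hypothesis $(\tau2)$: applying inverse branches contracts the metric by a factor $\eta$, and since outside $\Lambda_k$ the point $\ii_k\zeta_x$ agrees with $x$ (and likewise for $y$), one gets $d(\ii_k\zeta_x,\ii_k\zeta_y)\le\eta\, d(x,y)$. Hence, using $\Phi\in\Delta_f$ coordinate-wise on the inverse images,
\[
\Phi(\ii_k\zeta_x)\le B\bigl(d(\ii_k\zeta_x,\ii_k\zeta_y)\bigr)\,\Phi(\ii_k\zeta_y)\le B(\eta\, d(x,y))\,\Phi(\ii_k\zeta_y).
\]
Similarly, the H\"older bound on $f$ gives $\exp\bigl(f(\ii_k\zeta_x)\bigr)\le \exp\bigl(|f|_\beta\,\eta^\beta d(x,y)^\beta\bigr)\exp\bigl(f(\ii_k\zeta_y)\bigr)$, again because $d(\ii_k\zeta_x,\ii_k\zeta_y)^\beta\le\eta^\beta d(x,y)^\beta$. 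Multiplying these two estimates and summing over all $b_k$ inverse branches,
\[
P_k\Phi(x)\le B(\eta\, d(x,y))\,e^{|f|_\beta\,\eta^\beta d(x,y)^\beta}\;P_k\Phi(y).
\]
By the identity noted right after the definition of $\Delta_f$, namely $B(\eta z)e^{|f|_\beta\eta^\beta z^\beta}=B(z)$, the product of the two prefactors is exactly $B(d(x,y))$. Thus $P_k\Phi(x)\le B(d(x,y))\,P_k\Phi(y)$ for every $k$, and letting $k\to\infty$ yields $P\Phi(x)\le B(d(x,y))\,P\Phi(y)$; dividing by $\lambda$ preserves the inequality.

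I expect the only genuinely delicate point to be the metric contraction estimate $d(\ii_k\zeta_x,\ii_k\zeta_y)\le\eta\,d(x,y)$, i.e.\ checking carefully that the truncation operator $\ii_k$ interacts correctly with the inverse branches so that the coordinates beyond $\Lambda_k$ contribute nothing new to the sup defining $d$, while the coordinates inside $\Lambda_k$ are each contracted by the one-dimensional expansion; this is where hypotheses $(\tau1)$--$(\tau2)$ and the product structure of $\bar\tau$ are used. Everything else — non-negativity, normalization, and the algebraic collapse of the two exponential factors into $B(d(x,y))$ via the stated functional identity for $B$ — is routine.
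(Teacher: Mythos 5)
Your proposal is correct and follows essentially the same route as the paper: verify positivity and the normalization $\nu(\tfrac1\lambda P\Phi)=1$ directly, then establish the cone inequality by combining the H\"older bound $f(\ii_k\zeta_x)\le f(\ii_k\zeta_y)+|f|_\beta\eta^\beta d(x,y)^\beta$ with $\Phi(\ii_k\zeta_x)\le B(\eta d(x,y))\Phi(\ii_k\zeta_y)$ and the identity $B(\eta z)e^{|f|_\beta\eta^\beta z^\beta}=B(z)$, passing to the limit in $k$. The only difference is cosmetic (you prove the inequality for each $P_k$ before taking $k\to\infty$, while the paper estimates inside the limit), so nothing further is needed.
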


\begin{proof}
Clearly $\nu(\frac1\lambda P\Phi)=1$ for all $\Phi\in\Delta_f$. Since $P$ is a positive operator 
we also get $\frac1\lambda P\Phi\ge0$ for all $\Phi\in\Delta_f$. It remains to verify the regularity 
property. Since 
$$
f(i_k\zeta_x)\le f(i_k\zeta_y)+|f|_\beta d(i_k\zeta_x,i_k\zeta_y)^\beta
\le f(i_k\zeta_y)+|f|_\beta \eta^\beta d(x,y)^\beta
$$
one obtains
\begin{eqnarray*}
\frac1\lambda P\Phi(x) & = & \frac{1}\lambda \lim_{k\to\infty} \frac{1}{b_k} \sum_{|\zeta|=k} e^{f(i_k\zeta_x)}\Phi(i_k\zeta_x)\\
&\leq & \frac{1}\lambda \lim_{k\to\infty} \frac{1}{b_k} \sum_{|\zeta|=k} e^{f(i_k \zeta_y)} \Phi(i_k \zeta_y) B(\eta d(x, y))e^{|f|_\beta \eta^\beta d(x, y)^\beta}\\
&\leq & \frac{1}\lambda P\Phi(y) B(\eta d(x, y)) e^{|f|_\beta \eta^\beta d(x, y)^\beta}\\
&\leq & \frac{1}\lambda P\Phi(y)B(d(x, y)).
\end{eqnarray*}
Therefore, $\frac{1}\lambda P\Phi\in \Delta_f.$

\end{proof}

\begin{lemma}
There exist a unique $h\in\Delta_f$ so that $Ph=\lambda h$ and moreover $h$ is strictly
positive.
\end{lemma}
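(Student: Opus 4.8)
The statement to prove is the existence and uniqueness of a strictly positive $h\in\Delta_f$ with $Ph=\lambda h$. I have already shown that $\Delta_f$ is a convex subset of $C(\Omega)$ which is compact in the topology of uniform convergence (it is a closed subset of a bounded, equicontinuous family, by the modulus-of-continuity bound $\Phi(x)\le B(d(x,y))\Phi(y)$ together with $|\Phi|_\infty\le B(1)$, so Arzelà–Ascoli applies), and that $\frac1\lambda P$ maps $\Delta_f$ into itself. Since $P$ is continuous on $\mathcal C$ (and hence continuous in $|\cdot|_\infty$ on the equicontinuous set $\Delta_f$), the Schauder–Tychonoff fixed point theorem yields an $h\in\Delta_f$ with $\frac1\lambda Ph=h$, i.e.\ $Ph=\lambda h$. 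Here $\lambda=P^*\nu(\mathds 1)>0$ is the eigenvalue obtained earlier from the fixed point $\nu$ of $\mathcal M$ on $\mathscr M$.

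**Strict positivity.** Once $h\in\Delta_f$ is in hand, strict positivity is immediate from the regularity inequality: for any $x,y\in\Omega$ one has $h(y)\le B(d(x,y))h(x)\le B(1)h(x)$ since $\operatorname{diam}\Omega=1$. Because $\nu(h)=1$, there is some point $y_0$ with $h(y_0)\ge 1$ (indeed $\inf h\le 1\le\sup h$), and if $h$ vanished anywhere, say $h(x_0)=0$, the inequality $h(y)\le B(1)h(x_0)=0$ would force $h\equiv 0$, contradicting $\nu(h)=1$. Hence $h\ge B(1)^{-1}>0$ everywhere.

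**Uniqueness.** This is the step I expect to be the main obstacle, since $\Delta_f$ is infinite-dimensional and the naive Perron–Frobenius/Hilbert-metric argument needs care. The plan is to use a projective-contraction argument in the Birkhoff cone metric associated with the cone of nonnegative functions satisfying the $B$-regularity condition. Concretely, suppose $h_1,h_2\in\Delta_f$ both satisfy $Ph_i=\lambda h_i$. Consider $s=\sup_x h_1(x)/h_2(x)$ and $t=\inf_x h_1(x)/h_2(x)$, both finite and positive by the two-sided bound $h_i(x)/h_i(y)\le B(d(x,y))$. One shows that applying $\frac1\lambda P$ strictly improves these ratios unless $h_1/h_2$ is constant: writing $h_1-th_2\ge 0$ and not identically zero (else $h_1=th_2$ and, normalising by $\nu$, $t=1$ so $h_1=h_2$), the positivity and the averaging/mixing built into $P$ — each value $(Ph)(x)$ is a strictly positive weighted average over the $b_k\to\infty$ preimages, and the full-branch property $(\tau 1)$ guarantees that preimages of distinct points eventually separate — force $\frac1\lambda P(h_1-th_2)$ to be bounded below by a positive multiple of $h_2$, which strictly raises the infimum of the ratio. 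Iterating, $s_n-t_n\to 0$ geometrically, and since $\nu(h_1)=\nu(h_2)=1$ we conclude $h_1=h_2$. The technical heart is quantifying the "strictly raises the infimum" step uniformly; this is exactly the Lasota–Yorke / cone-contraction estimate that the subsequent Proposition (the bound $|\LL^n\Phi|_\beta\le|\Phi|_\beta\theta_1^n+C_2|\Phi|_\infty$) is designed to supply, so in the write-up I would either invoke that machinery or reproduce the short self-contained version of it using only the inequality $d(\zeta_x,\zeta_y)\le\eta\, d(x,y)$ from $(\tau 2)$ and the definition of $B$.
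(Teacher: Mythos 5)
Your existence step (Schauder--Tychonoff applied to the convex, equicontinuous, $|\cdot|_\infty$-compact set $\Delta_f$, into which $\frac1\lambda P$ maps) is exactly the paper's argument. Your positivity step is correct and in fact simpler than the paper's: since $h\in\Delta_f$ obeys the two-sided bound $h(y)\le B(d(x,y))h(x)\le B(1)h(x)$, one zero of $h$ forces $h\equiv 0$, contradicting $\nu(h)=1$; the paper instead propagates a zero along the dense set of $\bar\tau$-preimages. So far, fine.

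The genuine gap is in uniqueness. Your skeleton ($t=\inf h_1/h_2$, consider $h_1-th_2\ge 0$) is the same as the paper's, but the step that closes it is only asserted, and the tools you point to cannot supply it. First, $h_1-th_2$ is \emph{not} in $\Delta_f$ (differences do not inherit the $B$-regularity), so the Harnack-type argument that worked for positivity of $h$ does not apply to it. Second, the phrase ``iterating, $s_n-t_n\to 0$ geometrically'' is vacuous here: $h_1$, $h_2$, and hence $h_1-th_2$, are fixed points of $\frac1\lambda P$, so the ratios do not change under iteration; the only viable form of your argument is the contradiction ``applying $P^n$ strictly raises the infimum of the ratio,'' and that strict raising is precisely what must be proved. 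Third, the Lasota--Yorke Proposition you invoke controls $|\LL^n\Phi|_\beta$ and yields no pointwise lower bound at all, so it is not the cone-contraction estimate you need; likewise ``preimages of distinct points eventually separate'' is not the relevant property. What is needed (and what the paper uses) is that for every $x$ the set $\bigcup_n\bigcup_{\zeta\in\bar\tau^{-n}}\zeta_x$ is dense in $\Omega$: at a point $x_0$ where the infimum $t$ is attained (it is attained, by compactness and $h_2>0$), write $0=(h_1-th_2)(x_0)=\lambda^{-n}P^n(h_1-th_2)(x_0)$; nonnegativity forces $h_1-th_2$ to vanish on all $\bar\tau^n$-preimages of $x_0$, hence, by density and continuity, identically, so $h_1=th_2$ and $t=1$ from $\nu(h_1)=\nu(h_2)=1$. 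Equivalently you could prove a uniform positive lower bound for $P^n\psi$ when $\psi\ge 0$, $\psi\not\equiv 0$, but that again rests on the density of $\bar\tau$-preimages coming from the full-branch and expansion assumptions, not on the Lasota--Yorke inequality. Until one of these facts is supplied, your uniqueness argument is incomplete.
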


\begin{proof}
The set $\Delta_f$ is convex and equicontinuous by Lemma~\ref{delta-is-in-C}. Therefore $\Delta_f$ 
is compact in the $|\cdot|_\infty$-norm by Arzela-Ascoli and $\frac1\lambda P$ has by Schauder-Tychonoff
a fixed point $h\in\Delta_f$. That is $Ph=\lambda h$. To see that $h$ is strictly positive assume 
that $h$ has a zero at $x\in\Omega$, i.e. $h(x)=0$. Then 
$$
0=\frac1{\lambda^n}P^n h(x)=\frac1{\lambda^n}\lim_kP_k^n h(x)
=\frac1{\lambda^n} \lim_{k\to\infty} \frac{1}{b_k} \sum_{\zeta} e^{f^n(i_k\zeta_x)}h(i_k\zeta_x)
$$
where the sum is over all inverse branches $\zeta$ of $\bar\tau^{-n}$ in $i_k\Lambda_k$.
Since $h\ge0$ this implies that $h(\zeta_x)=0$ for all inverse branches $\zeta$ of $\bar\tau^n$.
Since the set $\bigcup_n\bigcup_{\zeta\in\bar\tau^{-n}} \zeta_x$ is dense in $\Omega$ 
and $h$ is continuous we conclude that $h$ is identically zero which contradicts the 
assumption $\nu(h)=1$.

To obtain uniqueness of $h$ assume that there is a second eigenfunction $h'\in\Delta_f$ so that 
$Ph'=\lambda h'$ and put $t=\inf\frac{h'}h$. By convexity of $\Delta_f$ one has 
$h-th\in\Delta_f$ and by choice of $t$ there exists an $x\in\Omega$ so that $(h-th')(t)=0$.
By the argument above we conclude that $h-th'$ must vanish identically, which is impossible.
Hence $h$ is unique.
\end{proof}

We now define the normalized transfer operator $L:\mathcal{C}\rightarrow\mathcal{C}$ by 
putting $L(\Phi):= P(h \Phi) / (\lambda h)$ for $\Phi\in\mathcal{C}$. Note that $L(\Phi)$ is well defined since $h>0$ 
and has the potential function $g=f-\log\lambda-\log h\circ\bar\tau+\log h$.
Moreover $L$ has the (dominant) simple eigenvalue $1$ with eigenfunction $\mathds1$ as 
 $L(\mathds1) = \mathds1$. The associated eigen-functional $\mu=h\nu$ is a $\bar\tau$-invariant
 probability measure. Define, also, $L_k (\Phi)$ as follows:
\[
L_k(\Phi)(x) = \frac{1}{b_k} \sum_{|\zeta|=k} e^{g(i_k \zeta_x)} \Phi(i_k \zeta_x)
\] where $g(x) = f(x) - \log \lambda - \log h\circ \bar\tau (x)+\log h (x).$ Note that by definition, 
$|g|_\beta \leq |f|_\beta + 2|h|_\beta < \infty$, and $|g|_\infty<\infty.$

Hence, we state a corollary to Lemma \ref{PkCauchy}:

\begin{corollary}
For each $x$, and for each $\Phi$, the sequence $L_k(\Phi)(x)$ is Cauchy, and hence it converges to $L(\Phi)(x)$.
\end{corollary}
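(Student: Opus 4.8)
The plan is to recognize $L_k$ as precisely the finite-lattice operator $P_k$ of Lemma~\ref{PkCauchy}, but built from the normalized potential $g=f-\log\lambda-\log h\circ\bar\tau+\log h$ in place of $f$; Lemma~\ref{PkCauchy}(II) then delivers the Cauchy property directly, and a short computation identifies the limit with $L(\Phi)(x)=P(h\Phi)(x)/(\lambda h(x))$.

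First I would check that $g$ is admissible. Since $h\in\Delta_f\subset\mathcal{C}$ by Lemma~\ref{delta-is-in-C}, one has $|g|_\infty<\infty$ together with $|g|_\beta\le|f|_\beta+2|h|_\beta<\infty$, hence $|g|_\infty+V_\alpha(g)\le|g|_\infty+|g|_\beta<\infty$ because $\alpha\ge\theta^\beta$; and for any $\Phi\in\mathcal{C}$ one trivially has $|\Phi|_\infty+V_\alpha(\Phi)\le\|\Phi\|<\infty$. Applying Lemma~\ref{PkCauchy}(II) with $g$ in the role of $f$ then shows that $(L_k(\Phi)(x))_k$ is a Cauchy sequence of real numbers, hence convergent, for every $x\in\Omega$.

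It remains to identify the limit. The key point is that for every inverse branch $\zeta$ of $\bar\tau|_{\Omega_k}$ one has $\bar\tau(i_k\zeta_x)=\pi_k x$: the coordinates in $\Lambda_k$ are mapped back to $x|_{\Lambda_k}$, while the coordinate $p_\tau$ outside $\Lambda_k$ is fixed by $\tau$. Consequently the factor $h(\bar\tau(i_k\zeta_x))=h(\pi_k x)$ is constant over the sum defining $L_k(\Phi)(x)$ and factors out, leaving $L_k(\Phi)(x)=P_k(h\Phi)(x)/(\lambda h(\pi_k x))$. Now let $k\to\infty$: since $h,\Phi\in\mathcal{C}$ and products of bounded H\"older functions are again bounded H\"older, $h\Phi\in\mathcal{C}$, so $P_k(h\Phi)(x)\to P(h\Phi)(x)$ by the very definition of $P$; and $\pi_k x\to x$ in the metric $d$, so continuity and strict positivity of $h$ give $h(\pi_k x)\to h(x)>0$. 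Therefore $L_k(\Phi)(x)\to P(h\Phi)(x)/(\lambda h(x))=L(\Phi)(x)$. I expect the only real bookkeeping to be making the branch identity $\bar\tau(i_k\zeta_x)=\pi_k x$ precise from the definitions of $i_k$, $\pi_k$ and the inverse branches, and checking that $h\Phi$ genuinely meets the hypotheses of Lemma~\ref{PkCauchy}; both are immediate from what has already been set up, so there is no serious obstacle here.
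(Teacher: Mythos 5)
Your proposal is correct and follows essentially the paper's own route: the Cauchy property comes from Lemma~\ref{PkCauchy} applied with the normalized potential $g$ (whose finiteness of $|g|_\infty$ and $|g|_\beta$ the paper records just before the corollary), and the limit is identified by rewriting the summands so that $L_k(\Phi)$ is compared with $P_k(h\Phi)/(\lambda h)$. The only difference is cosmetic: you factor out $h(\bar\tau(i_k\zeta_x))=h(\pi_k x)$ exactly and pass to the limit using continuity and positivity of $h$, whereas the paper reaches the same conclusion by a $(1\pm\epsilon)$ sandwich based on $\var_k(h)\to0$.
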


\begin{proof}
The fact that $L_k(\Phi)(x)$ is Cauchy, and hence has a point wise limit, follows directly from 
Lemma~\ref{PkCauchy}. Also, notice that
\begin{eqnarray*}
L(\Phi)(x) & =& \frac{P(h\Phi)(x)}{\lambda h(x)}\\
& = & \lim_{k\to\infty} \frac{P_k(h\Phi)(x)}{\lambda h(x)}\\
& \geq& \lim_{k\to\infty} \frac{1}{b_k} \sum_{|\zeta|=k}
e^{f(i_k\zeta_x)}h(i_k\zeta_x) \Phi(i_k\zeta_x)\frac{1}{\lambda h(x)}.
\end{eqnarray*}
The term in the last summation can be written as
\[
e^{f(i_k\zeta_x)}h(i_k\zeta_x)\Phi(i_k\zeta_x)\frac{1}{\lambda h(x)} 
= e^{g(i_k \zeta_x)}\Phi(i_k\zeta_x)\frac{h(\tau(i_k\zeta_x))}{h(x)},
\] 
and so, because $\var(h) < \infty$, for any $\epsilon>0$, we have that
\[
L(\Phi)(x) \geq (1 - \epsilon) \lim_{k\to\infty} L_k(\Phi)(x).
\]
Similarly, we obtain that $L(\Phi)(x) \leq (1+\epsilon)\lim_{k\to\infty} L_k(\Phi)(x).$ This completes the proof.
\end{proof}

\subsection{Lasota-Yorke inequality for $L$} \label{sec:LasotaYorkeL}

We now establish a Lasota-Yorke inequality for the operator $L$. We do this by obtaining the corresponding inequality for each approximation $L_k$. 
We use the following notation: For a function $\Phi$ we denote by 
$\Phi^{(n)}=\sum_{j=0}^{n-1}\Phi\circ\bar\tau$ its $n$th ergodic sum.
We shall need the following technical estimate.

\begin{lemma}\label{lemma:technical}
Let $f\in\mathcal{C}$ be a potential and let $g = f - \log \lambda - \log h\circ \bar\tau+\log h.$ 

Then for all $x$ and $y\in\I$:
\[
\left|
g^{(n)}(i_k\zeta_{x}) - g^{(n)}(i_k\zeta_{y})
\right|
\leq  \left(3 |h|_\beta +\frac{ \eta^\beta}{1-\eta^\beta} |f|_\beta \right)d(x, y)^\beta .
\]
\end{lemma}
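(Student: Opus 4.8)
The plan is to unwind the definition $g^{(n)} = \sum_{j=0}^{n-1} g\circ\bar\tau^j$ and exploit the telescoping structure coming from $g = f - \log\lambda - \log h\circ\bar\tau + \log h$. First I would write
\[
g^{(n)} = f^{(n)} - n\log\lambda - \log h\circ\bar\tau^n + \log h,
\]
since the terms $-\log h\circ\bar\tau^{j+1} + \log h\circ\bar\tau^j$ telescope. Applying this at the two points $i_k\zeta_x$ and $i_k\zeta_y$, the constant $n\log\lambda$ cancels in the difference, leaving three pieces to estimate: the difference of $f^{(n)}$, the difference of $\log h$ evaluated at $i_k\zeta_x$ versus $i_k\zeta_y$, and the difference of $\log h\circ\bar\tau^n$ at those two points.

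The key geometric input is assumption $(\tau 2)$, expansion: each inverse branch contracts distances by $\eta$, so $d(i_k\zeta_x, i_k\zeta_y) \le \eta\, d(x,y)$ (using that $i_k$ and $\bar\tau$ commute with $\pi_k$, so this reduces to the one-coordinate estimate). More generally, the $j$-th iterate of $\bar\tau$ applied to $i_k\zeta_x$ is an inverse-branch image under $\bar\tau^{n-j}$, so $d(\bar\tau^j i_k\zeta_x, \bar\tau^j i_k\zeta_y) \le \eta^{n-j} d(x,y)$. Then for the $f^{(n)}$ term:
\[
\bigl|f^{(n)}(i_k\zeta_x) - f^{(n)}(i_k\zeta_y)\bigr|
\le \sum_{j=0}^{n-1} |f|_\beta\, d(\bar\tau^j i_k\zeta_x, \bar\tau^j i_k\zeta_y)^\beta
\le |f|_\beta\, d(x,y)^\beta \sum_{j=0}^{n-1} \eta^{\beta(n-j)}
\le \frac{\eta^\beta}{1-\eta^\beta}\,|f|_\beta\, d(x,y)^\beta.
\]
For the $\log h$ term: since $h \in \Delta_f$, the ratio bound $h(x)/h(y) \le B(d(x,y))$ gives $|\log h(x) - \log h(y)| \le \log B(d(x,y)) \le |f|_\beta \frac{\eta^\beta}{1-\eta^\beta} d(x,y)^\beta$; but it is cleaner here to use $|\log h|_\beta \le |h|_\beta / \inf h$, or directly the Hölder bound on $h$ established in Lemma~\ref{delta-is-in-C}, to get $|\log h(i_k\zeta_x) - \log h(i_k\zeta_y)| \le |h|_\beta\, d(i_k\zeta_x, i_k\zeta_y)^\beta \le |h|_\beta\, d(x,y)^\beta$ (absorbing $\eta^\beta \le 1$). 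For the $\log h\circ\bar\tau^n$ term, note $\bar\tau^n i_k\zeta_x = $ the point $x$ itself at coordinates $|i|\le k$ (up to the $\pi_k$ truncation), so $d(\bar\tau^n i_k\zeta_x, \bar\tau^n i_k\zeta_y) \le d(x,y)$ and this contributes at most $|h|_\beta\, d(x,y)^\beta$. Wait — I should be careful: to be safe I would estimate it as $|h|_\beta\, d(\bar\tau^n i_k\zeta_x, \bar\tau^n i_k\zeta_y)^\beta \le |h|_\beta\, d(x,y)^\beta$ regardless, since $\bar\tau$ is at worst distance-nonincreasing-up-to-constant on the relevant points, or just bound by $2|h|_\infty$ if needed and note the Hölder form is what is wanted. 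Summing the three contributions $|h|_\beta + |h|_\beta + \frac{\eta^\beta}{1-\eta^\beta}|f|_\beta$ gives at most $\left(3|h|_\beta + \frac{\eta^\beta}{1-\eta^\beta}|f|_\beta\right) d(x,y)^\beta$, with one $|h|_\beta$ to spare, matching the claimed bound.

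The main obstacle I anticipate is bookkeeping the contraction rates correctly under iteration and being careful about the $i_k$ truncation: one must verify that $\bar\tau^j(i_k\zeta_x)$ really is an inverse-branch image of $i_k(\bar\tau^j$-something$)$ so that the $\eta^{n-j}$ contraction applies, which uses the commutation relation $\bar\tau^i\pi_k = \pi_k\bar\tau^i$ noted in the excerpt. A secondary subtlety is controlling $|\log h|$ rather than $|h|$: this needs strict positivity of $h$ (already established) plus the Hölder estimate on $h$ from Lemma~\ref{delta-is-in-C}; alternatively, working directly with the multiplicative bound $h(x)/h(y) \le B(d(x,y))$ sidesteps the division. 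Everything else is the geometric series $\sum \eta^{\beta j} = \frac{1}{1-\eta^\beta}$ and the triangle inequality applied term by term.
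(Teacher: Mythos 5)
Your proposal follows essentially the same route as the paper's proof: telescope $g^{(n)}$ into $f^{(n)} - n\log\lambda + \log h - \log h\circ\bar\tau^n$, bound the $f^{(n)}$ difference by $|f|_\beta d(x,y)^\beta\sum_{j}\eta^{\beta(n-j)}\le \frac{\eta^\beta}{1-\eta^\beta}|f|_\beta d(x,y)^\beta$ using the $\eta$-contraction of inverse branches and the commutation $\bar\tau^i\pi_k=\pi_k\bar\tau^i$, and bound the two boundary terms by a H\"older estimate (the paper contracts the term at $i_k\zeta_x$ by $\eta^{\beta n}$ and uses $\bar\tau^n i_k\zeta_x=\pi_k x$ for the other, exactly as you do). The one wrinkle you flag --- that the telescoped terms involve $\log h$ rather than $h$, so the constant should really be $|\log h|_\beta$ (controlled via $\inf h>0$ or the multiplicative bound from $\Delta_f$) rather than $|h|_\beta$ --- is glossed over in the paper's own proof in the same way, so your argument is correct and, on this point, slightly more careful than the original.
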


\begin{proof}
Since
\[
g^{(n)}(i_k \zeta_x) = \sum_{j = 0}^{n-1} g\circ \bar\tau^{j}(i_k\zeta_x) = \sum_{j =0}^{n-1} f\circ \bar\tau^{j}(i_k\zeta_x) - n\log\lambda  + h(i_k\zeta_x) - h\circ\bar\tau^n(i_k\zeta_x)\]
we get that 
\[
g^{(n)}(i_k\zeta_x) - g^{(n)}(i_k\zeta_y) = f^{(n)}(i_k\zeta_x) - f^{(n)}(i_k\zeta_y) + h(i_k\zeta_x) - h(i_k\zeta_y) + h\circ \bar\tau^{n}(i_k\zeta_y) - h\circ \bar\tau^{n}(i_k\zeta_x).
\]
Note that $|h\circ \bar\tau^{n}(i_k\zeta_y) - h\circ \bar\tau^{n}(i_k\zeta_x)|$ is bounded by 
$|h|_\beta d(i_k x|_{\Lambda_k}, i_k y|_{\Lambda_k})^\beta \leq |h|_\beta d(x, y)^\beta$
and  $|h(i_k\zeta_x) - h(i_k\zeta_y)|$ is bounded by $|h|_\beta \eta^{\beta n} d(x, y)^\beta$.
Define $\tau_{(k)}:= \bar \tau \circ \mathfrak{i}_k$. The above combined with 
\begin{eqnarray*}
\left|f^{(n)}(i_k\zeta_{x}) - f^{(n)}(i_k\zeta_y)\right|
&\leq & \sum_{i = 0}^{n-1} \left|f\circ \bar \tau^{i}(i_k\zeta_{x}) - f \circ \bar\tau^{i}(i_k\zeta_{y})\right|\\
&\leq & \sum_{i = 0}^{n-1} \left| f (i_k (\tau_{(k)}^{i}\zeta_{x})) - f (i_k (\tau_{(k)}^{i}\zeta_{ y}))\right|\\
&\leq & \sum_{i = 0}^{n-1}
\left| f (i_k ((\tau^{i}i_k\zeta_{x}))|_{\Omega_k}) - f (i_k ((\tau^{i}i_k\zeta_{y}))|_{\Omega_k})\right|\\
&\leq & |f_k|_\beta \sum_{i = 0}^{n-1}d(i_k(\bar \tau^{i}_{(k)} \zeta_{x}), i_k(\bar \tau^{i}_{(k)} \zeta_{y}))^\beta\\
&\leq & |f|_\beta d(x,  y)^\beta \sum_{i = 0}^{n-1} \eta^{\beta(n -i)}.
\end{eqnarray*}
proves the desired bound.
\end{proof}

\begin{proposition}[Lasota-Yorke inequality for $L_k$]\label{lemma:LYlk}
Let $\Phi\in\mathcal{C}$ and $f, h, \tau$ be as before. Let $k\in\mathbb{Z}^+$. Then $\exists \;C_4>0$ 
(depending on $f, h$ and $\tau$) such that
\[
|L^n_k(\Phi)|_\beta \leq \left(|\Phi|_\beta  \eta^{\beta n} + C_4|\Phi|_\infty \right) L_k^n(\mathds1).
\]
\end{proposition}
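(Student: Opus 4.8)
The plan is to establish the Lasota-Yorke inequality for the finite-lattice operators $L_k$ directly, using the multiplicative cocycle structure of $L_k^n$ together with the distortion estimate of Lemma~\ref{lemma:technical}. First I would write out $L_k^n(\Phi)(x)$ as a sum over inverse branches $\zeta$ of $\bar\tau^{-n}$ supported on $\Lambda_k$, namely
\[
L_k^n(\Phi)(x) = \frac1{b_k^n}\sum_{\zeta} e^{g^{(n)}(i_k\zeta_x)}\Phi(i_k\zeta_x),
\]
so that in particular $L_k^n(\mathds1)(x) = \frac1{b_k^n}\sum_\zeta e^{g^{(n)}(i_k\zeta_x)}$. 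Then for two points $x,y\in\Omega$ I would estimate $|L_k^n(\Phi)(x) - L_k^n(\Phi)(y)|$ by pairing up terms over the same inverse branch $\zeta$ and using the triangle inequality to split the difference into two parts: one controlling the variation of $\Phi$ along the branch, and one controlling the variation of the weight $e^{g^{(n)}}$ along the branch.

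For the first part, I would use that for each inverse branch $\zeta$ of $\bar\tau^{-n}$ one has, by the expansion hypothesis $(\tau 2)$ iterated $n$ times, $d(i_k\zeta_x, i_k\zeta_y) \le \eta^n d(x,y)$ (noting that $i_k$ does not increase the relevant distances since points agree outside $\Lambda_k$), so
\[
|\Phi(i_k\zeta_x) - \Phi(i_k\zeta_y)| \le |\Phi|_\beta\, \eta^{\beta n} d(x,y)^\beta.
\]
Summing against the weights $e^{g^{(n)}(i_k\zeta_x)}/b_k^n$ produces the term $|\Phi|_\beta\,\eta^{\beta n} L_k^n(\mathds1)(x) d(x,y)^\beta$ — modulo replacing $L_k^n(\mathds1)(x)$ by $L_k^n(\mathds1)(y)$ up to a bounded distortion factor, which is where Lemma~\ref{lemma:technical} enters: it gives $e^{g^{(n)}(i_k\zeta_x)} \le e^{C} e^{g^{(n)}(i_k\zeta_y)}$ with $C = 3|h|_\beta + \frac{\eta^\beta}{1-\eta^\beta}|f|_\beta$ uniformly in $n,k,\zeta$, so $L_k^n(\mathds1)(x)$ and $L_k^n(\mathds1)(y)$ are comparable. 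For the second part, I would write $|e^{g^{(n)}(i_k\zeta_x)} - e^{g^{(n)}(i_k\zeta_y)}| = e^{g^{(n)}(i_k\zeta_y)} |e^{g^{(n)}(i_k\zeta_x) - g^{(n)}(i_k\zeta_y)} - 1|$ and bound $|e^t - 1| \le |t| e^{|t|}$ with $|t| \le C d(x,y)^\beta$ from Lemma~\ref{lemma:technical}, giving a bound of the form $c_1 |\Phi|_\infty L_k^n(\mathds1)(y) d(x,y)^\beta$. Collecting, one gets $|L_k^n(\Phi)|_\beta \le (|\Phi|_\beta \eta^{\beta n} + C_4 |\Phi|_\infty) L_k^n(\mathds1)$ with $C_4$ absorbing $e^C$, $c_1$, and the comparison constants, and — crucially — independent of $k$ and $n$.

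The main obstacle is the bookkeeping needed to ensure the constant $C_4$ is genuinely uniform in both $k$ and $n$: the distortion constant from Lemma~\ref{lemma:technical} must be $n$-independent (which it is, since the geometric series $\sum_i \eta^{\beta(n-i)}$ is summed to $\frac{\eta^\beta}{1-\eta^\beta}$ regardless of $n$), and one must be careful that the $|\Phi_k|_\beta$ appearing when one restricts $\Phi$ to finite sublattices is dominated by $|\Phi|_\beta = \sup_k |\Phi_k|_\beta$, so that the final estimate is stated in terms of the genuine norm on $\mathcal{C}$. A secondary subtlety is the interplay between the projections $i_k$ and the dynamics: one needs $\bar\tau^i \circ i_k = i_k \circ \bar\tau^i$ on $\Omega_k$ (established earlier via the fixed-point choice $p_\tau$) so that iterating $\tau_{(k)} = \bar\tau\circ i_k$ genuinely tracks the finite-lattice dynamics and the contraction $\eta$ applies at each step. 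Once uniformity is secured, passing to the limit $k\to\infty$ to obtain the corresponding inequality for $L$ itself is routine given the earlier Cauchy/convergence corollary, though that step is not required for this proposition.
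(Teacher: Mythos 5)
Your proposal is correct and follows essentially the same route as the paper: split the difference over each inverse branch into the variation of $\Phi$ (controlled by the branch contraction $d(i_k\zeta_x,i_k\zeta_y)\le\eta^n d(x,y)$) and the variation of the weight $e^{g^{(n)}}$ (controlled by Lemma~\ref{lemma:technical}), then factor out $L_k^n(\mathds1)$. Your extra care about the normalization $b_k^n$ for the $n$-th iterate and about which point $L_k^n(\mathds1)$ is evaluated at is if anything slightly more precise than the paper's write-up, but it is the same argument.
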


\begin{proof}
For $x, y\in \I.$:
\begin{eqnarray*}
|L^n_k(\Phi)(x) - L^n_k(\Phi)(y)| & \leq  &\frac{1}{b_k} \sum_{|\zeta|=}   \left|    e^{g^{(n)}(i_k\zeta_x)}\Phi(i_k \zeta_x) - e^{g^{(n)}(i_k\zeta_y)}\Phi(i_k\zeta_y) \right| \\
&\leq & \frac{1}{b_k} \left[ 
\sum_\zeta e^{g^{(n)}(i_k \zeta_x)} \left|   \Phi(i_k\zeta_x) - \Phi(i_k\zeta_y)     \right|     +      \sum_\zeta |\Phi|_\infty \left|  e^{g^{(n)}(i_k\zeta_x)} - e^{g^{(n)}(i_k\zeta_y)}   \right|
\right] \\
&\leq & \frac{1}{b_k} \left[ 
\sum_\zeta |\Phi|_\beta  \eta^{\beta n} d(x, y)^\beta L_k^n(\mathds1) \right.\\
&&\hspace{2cm}+ \left.   \sum_\zeta |\Phi|_\infty e^{g^{(n)}(i_k\zeta_y)} \left|    1 - \exp(g^{(n)}(i_k\zeta_x) - g^{(n)}(i_k\zeta_y))        \right|
  \right]
\end{eqnarray*}
as $d(i_k\zeta_x,i_k\zeta_y)\le\eta^nd(x,y)$.
By Lemma~\ref{lemma:technical}, we have 
\[
\left|  1 - \exp(g^{(n)}(i_k\zeta_x) - g^{(n)}(i_k\zeta_y))      \right|  
\leq c_1 d(x, y)^\beta + o(d(x, y)^\beta)
\leq c_2 d(x, y)^\beta
\] 
where $ c_1\le3 |h|_\beta  +\frac{ \eta^\beta}{1-\eta^\beta}|f|_\beta $. Consequently
\[
\sup_{x\neq y}\frac{|L_k^n(\Phi)(x) - L_k^n(\Phi)(y)|}{d(x, y)^\beta} 
\leq |\Phi|_\beta \eta^{\beta n} L_k^n(\mathds1) + c_2|\Phi|_\infty L^n_k(\mathds1).
\]
\end{proof}

Now we can prove the 'Lasota-Yorke' inequality for the operator $L$ for the uncoupled map $\bar\tau$ on the full, infinite lattice $\I$.

\begin{thm}
Let $\Phi\in\mathcal{C}$ and let $f, h, \tau$ be as before, and $n\in\mathbb{N}$. There exists a constant $C_5>0$ 
depending only on $f, h, \tau$ such that 
\[
|L^n(\Phi)|_\beta \leq |\Phi|_\beta \eta^{\beta n}+ C_5 |\Phi|_\infty.
\]
\end{thm}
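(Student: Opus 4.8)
The plan is to obtain the infinite-lattice Lasota-Yorke inequality by passing to the limit $k\to\infty$ in the inequality already established for the approximations $L_k$ in Proposition~\ref{lemma:LYlk}. Recall that Proposition~\ref{lemma:LYlk} gives, for every $k$,
\[
|L^n_k(\Phi)|_\beta \leq \bigl(|\Phi|_\beta \eta^{\beta n} + C_4|\Phi|_\infty\bigr)\, L_k^n(\mathds1),
\]
so the first step is to control $L_k^n(\mathds1)$ uniformly in $k$. Since $L=P(h\,\cdot\,)/(\lambda h)$ is normalized so that $L(\mathds1)=\mathds1$, and $L_k^n(\mathds1)(x)$ converges pointwise to $L^n(\mathds1)(x)=1$ by the Corollary following the definition of $L$, one expects $\sup_k |L_k^n(\mathds1)|_\infty \le C_6$ for a constant depending only on $f,h,\tau$ and $n$; in fact, tracking the ratios $h(\tau(i_k\zeta_x))/h(x)$ as in that Corollary's proof, $L_k^n(\mathds1)$ is pinched between $(1-\epsilon)$ and $(1+\epsilon)$ times a quantity tending to $1$, so it is bounded (and one can even take the bound close to $1$). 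Substituting this uniform bound into Proposition~\ref{lemma:LYlk} yields
\[
|L^n_k(\Phi)|_\beta \leq C_6\bigl(|\Phi|_\beta \eta^{\beta n} + C_4|\Phi|_\infty\bigr)
\]
for all $k$, possibly after absorbing $C_6$ into the constants to restore the clean form $|\Phi|_\beta\eta^{\beta n}+C_5|\Phi|_\infty$; if the constant in front of $|\Phi|_\beta\eta^{\beta n}$ must be exactly $1$ one argues instead with the sharper estimate $L_k^n(\mathds1)\to 1$ and a limiting/rescaling argument, which is where a little care is needed.

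The second step is a lower-semicontinuity argument for the H\"older seminorm under pointwise limits. Fix $x\neq y$ in $\Omega$. By the Corollary, $L_k(\Phi)(x)\to L(\Phi)(x)$ and $L_k(\Phi)(y)\to L(\Phi)(y)$ (and similarly for $L^n$, since $L^n$ is the transfer operator for the potential $g^{(n)}$ and the same Cauchy/convergence statement applies to its finite approximations $L_k^n$). Hence
\[
\frac{|L^n(\Phi)(x)-L^n(\Phi)(y)|}{d(x,y)^\beta}
= \lim_{k\to\infty}\frac{|L^n_k(\Phi)(x)-L^n_k(\Phi)(y)|}{d(x,y)^\beta}
\le \liminf_{k\to\infty} |L^n_k(\Phi)|_\beta
\le |\Phi|_\beta\eta^{\beta n}+C_5|\Phi|_\infty.
\]
Taking the supremum over $x\neq y$ gives $|L^n(\Phi)|_\beta\le |\Phi|_\beta\eta^{\beta n}+C_5|\Phi|_\infty$. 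One must also confirm $L^n(\Phi)\in\mathcal{C}$, i.e.\ that $|L^n(\Phi)|_\infty<\infty$, but this is immediate from $|L(\Phi)|_\infty\le|\Phi|_\infty$ (the normalized operator is a contraction in sup norm, being an averaging operator with $L\mathds 1=\mathds 1$), so $|L^n(\Phi)|_\infty\le|\Phi|_\infty$.

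The main obstacle I anticipate is the bookkeeping around $L_k^n(\mathds1)$ and the passage to the infinite lattice: one needs the convergence $L_k^n\Phi\to L^n\Phi$ to hold \emph{pointwise} (which is exactly what the Corollary provides, extended from $L_k$ to $L_k^n$ by applying it with the $n$-step potential $g^{(n)}$ and noting $|g^{(n)}|_\beta,|g^{(n)}|_\infty<\infty$ by Lemma~\ref{lemma:technical}), and one needs the seminorm bound on $L^n_k\Phi$ to be uniform in $k$ before taking $\liminf$. A secondary subtlety is that the definition $|\Phi|_\beta=\sup_k|\Phi_k|_\beta$ is in terms of finite restrictions, so strictly one should bound $|(L^n\Phi)_k|_\beta$ for each $k$; since $(L^n\Phi)_j(x)=L^n\Phi(\pi_j x)$ and $d(\pi_j x,\pi_j y)\le d(x,y)$, the difference-quotient bound for $L^n\Phi$ over all of $\Omega$ dominates the one for each $(L^n\Phi)_j$, so no loss occurs and the displayed inequality with the $\sup_k$-definition follows. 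Everything else is a matter of collecting the constants $C_4,C_6$ into a single $C_5$ depending only on $f,h,\tau$.
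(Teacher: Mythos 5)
Your proposal is correct and follows essentially the same route as the paper: apply the Lasota--Yorke inequality for the finite approximations $L_k^n$ (Proposition~\ref{lemma:LYlk}), use the Cauchy/pointwise convergence $L_k^n\Phi\to L^n\Phi$ from Lemma~\ref{PkCauchy}, and recover the sharp coefficient $\eta^{\beta n}$ by the fact that $L_k^n(\mathds1)\to\mathds1$ pointwise, exactly the ``care'' the paper takes by letting $k_2\to\infty$ and then $k_1\to\infty$. The only cosmetic difference is that the paper telescopes with the quantitative estimate $C_3e^{|g|_\infty}|\Phi|_\infty\alpha^{k_1}$ where you invoke a liminf, which changes nothing of substance.
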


\begin{proof} Recall $\eta$ from Assumption ($\tau$\ref{tau2}). 
Let $k_2\geq k_1\in \mathbb{Z}^+$. Then for all $x, y\in\I$ one has
\begin{eqnarray*}
|L_{k_2}^n(\Phi)(y) - L_{k_1}^n(\Phi)(x)| &\leq & |L_{k_1}^n(\Phi)(x) - L_{k_1}^n(\Phi)(y)| + |L_{k_1}^n(\Phi)(y) - L_{k_2}^n(\Phi)(y)| \\
&\leq & \left[ |\Phi|_\beta \eta^{n\beta} + C_4 |\Phi|_\infty \right] L_{k_1}^n(\mathds1) d(x, y)^\beta+ C_3 |L_{k_1}^n(\Phi)|_\infty \alpha^{k_1}\\
&\leq &\left[ |\Phi|_\beta  \eta^{\beta n} + C_4 |\Phi|_\infty \right]L_{k_1}^n(\mathds1)  d(x, y)^\beta+ C_3 e^{|g|_\infty}|\Phi|_\infty \alpha^{k_1}, 
\end{eqnarray*}
where the second line uses Lemmas~\ref{lemma:LYlk} and~\ref{PkCauchy}~(III) while 
the last line follows from the proof of Lemma \ref{PkCauchy}~(I). Letting $k_2\to\infty$ we see that
\[
|L^n(\Phi)(y) - L_{k_1}^n(\Phi)(x)| \leq \left[ |\Phi|_\beta \eta^{\beta n} + C_4 |\Phi|_\infty \right] 
L_{k_1}^n(\mathds1) d(x, y)^\beta+ C_3 e^{|g|_\infty}|\Phi|_\infty \alpha^{k_1}
\] and then on letting $k_1\to\infty$, and recalling that $L_{k_1}(\mathds1)\to L(\mathds1) =\mathds1$ we get
\[
|L^n(\Phi)(y) - L^n(\Phi)(x)| \leq \left[ |\Phi|_\beta \eta^{\beta n} + C_4 |\Phi|_\infty \right] d(x, y)^\beta
\] 
The proof is complete on dividing by $d(x, y)^\beta$ and taking the supremum. We put $C_5=C_4$.
\end{proof}

\section{The perturbation $E$.}\label{sec:E}

Denote by $\sigma:\Omega\to\Omega$ the shift map which is given by
 $\sigma((x_i)_{i = -\infty}^\infty) = (x_{i+1})_{i = -\infty}^\infty$.
Let $E:\I \to \I$ be a function such that $E^{-1}$ exists, and there exists a constant $C_E \in (0, \eta^{-1})$ 
satisfying the following Assumption (E\ref{E1}):
\begin{enumerate}[(E1)]

\item\label{E1}  For each $n\in\mathbb{Z}$ one has
\[
d(\sigma^n E^{-1}x, \sigma^n E^{-1}y) \leq C_E d(\sigma^n x, \sigma^n y).
\]

\end{enumerate}
Now define the coupled map $T:\I\to \I$ by $T(x) = E(\bar\tau(x)).$ Define, for $\Phi\in\C$, $\LL(\Phi)(x)  = L(\Phi) (E^{-1}(x)).$ It is straightforward to verify that for $\Phi, \Psi\in \C$, 
$\LL(\Phi\circ T\cdot \Psi) = \Phi \cdot \LL(\Psi)$ (as $T=E\circ\bar\tau$).

\begin{thm} \label{t:LY-perturb}
There exists a constant $C_6$ such that 
for every $\Phi \in \C$, $\LL(\Phi) \in \C$ one has 
\[
|\LL^n(\Phi)|_\beta 
\leq |\Phi|_\beta (C_E \eta)^{\beta n} + C_6 |\Phi|_\infty C_E^\beta \sum_{i = 0}^\infty (C_E \eta)^{i\beta}
\]
\end{thm}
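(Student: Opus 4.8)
The plan is to transfer the Lasota--Yorke inequality for $L$ (the previous theorem) to $\mathcal{L}$ using the identity $\mathcal{L}(\Phi)(x) = L(\Phi)(E^{-1}x)$ and the contraction hypothesis (E\ref{E1}). First I would unwind the iterates: since $\mathcal{L}(\Phi) = L(\Phi)\circ E^{-1}$ and, more generally, $\mathcal{L}^n(\Phi)(x) = ?$ — here one must be careful, because $\mathcal{L}$ is \emph{not} simply $L^n$ composed with $E^{-n}$; rather, iterating $\mathcal{L}(\Psi) = L(\Psi)\circ E^{-1}$ gives $\mathcal{L}^2(\Phi)(x) = L\big(L(\Phi)\circ E^{-1}\big)(E^{-1}x)$, and so on. The key observation is that $L$ is built from inverse branches of $\bar\tau$ and each such branch contracts by $\eta$ (Assumption ($\tau$\ref{tau2})), while $E^{-1}$ contracts distances by $C_E$ on every shifted copy; the metric $d$ is a supremum over shifts of $\theta^{|k|}d_I$, so (E\ref{E1}) in fact gives $d(E^{-1}x, E^{-1}y)\le C_E\, d(x,y)$ directly (take $n=0$), and likewise the branches of $\bar\tau$ commute appropriately with the projections $\pi_k$. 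So the composition $x\mapsto$ (inverse branch of $T^n = (E\bar\tau)^n$) is a contraction by at most $(C_E\eta)^n$.

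Concretely, I would argue as follows. Fix $x,y\in\Omega$ and unwind $\mathcal{L}^n(\Phi)(x)-\mathcal{L}^n(\Phi)(y)$ as a single sum over inverse branches of $T^n$, of terms $e^{G}\Phi(\text{branch point})$ where $G$ collects the (normalised) potential sums; the weights sum to $\mathcal{L}^n(\mathds1)$, which equals $\mathds1$ after one checks $\mathcal{L}(\mathds1) = \mathds1$ (this follows from $L(\mathds1)=\mathds1$ and $\mathds1\circ E^{-1}=\mathds1$). Split the difference, as in the proof of Proposition~\ref{lemma:LYlk} and the theorem preceding, into a ``$\Phi$-variation'' term, bounded by $|\Phi|_\beta$ times the contraction factor $(C_E\eta)^{\beta n}$ (since each branch of $T^n$ contracts by $(C_E\eta)^n$ and the remaining $E^{-1}$ at the outermost level contributes another $C_E^\beta$ — hence the $C_E^\beta$ prefactor in the stated bound), and a ``potential-variation'' term bounded using a Lemma~\ref{lemma:technical}-type estimate: the Hölder variation of the accumulated potential along the $i$-th step is controlled by $d(x,y)^\beta$ times $(C_E\eta)^{i\beta}$ (the further back in the composition, the more contraction has already occurred), whose sum is the geometric series $\sum_{i=0}^\infty (C_E\eta)^{i\beta}$ — this converges precisely because $C_E\eta<1$, which is guaranteed by $C_E\in(0,\eta^{-1})$. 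Collecting the two contributions and dividing by $d(x,y)^\beta$, then taking the supremum over $x\ne y$ and over the projection level $k$, yields the claimed inequality with a suitable $C_6$ absorbing the constants $|f|_\beta, |h|_\beta$ from Lemma~\ref{lemma:technical}.

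The main obstacle I anticipate is bookkeeping the interaction between $E^{-1}$ and the inverse branches of $\bar\tau$ inside the iterated operator: one needs that applying $E^{-1}$ and then an inverse branch of $\bar\tau$ (the building block of $T^{-1}$) is a contraction by $C_E\eta$ \emph{in the metric $d$ uniformly over all relevant points}, and that this persists through $n$-fold composition and through the finite-lattice truncations $\mathcal{L}_k$ (on which one must first prove the inequality, then pass to the limit $k\to\infty$ exactly as in the uncoupled case, using Lemma~\ref{PkCauchy}(III) and $\mathcal{L}_k(\mathds1)\to\mathds1$). The commutation property $\bar\tau^i\pi_k = \pi_k\bar\tau^i$ recorded earlier, together with (E\ref{E1}) stated for all shifts $\sigma^n$, is exactly what makes this go through; verifying that the geometric bound on the potential variation really comes out with exponent $(C_E\eta)^{i\beta}$ rather than something weaker is the delicate point, and it hinges on re-running the telescoping estimate of Lemma~\ref{lemma:technical} with $\eta$ replaced by $C_E\eta$ at each stage.
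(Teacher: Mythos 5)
Your outline is workable, but it takes a substantially heavier route than the paper, which proves this theorem by a two-line induction rather than by expanding $\mathcal{L}^n$ over inverse branches of $T^n$. The paper's argument is: for $n=1$, write $|\LL(\Phi)(x)-\LL(\Phi)(y)|=|L(\Phi)(E^{-1}x)-L(\Phi)(E^{-1}y)|\le |L(\Phi)|_\beta\, d(E^{-1}x,E^{-1}y)^\beta$, apply the uncoupled Lasota--Yorke theorem with $n=1$ and $d(E^{-1}x,E^{-1}y)\le C_E d(x,y)$ to get $|\LL(\Phi)|_\beta\le|\Phi|_\beta(C_E\eta)^\beta+C_5C_E^\beta|\Phi|_\infty$; then, since $|\LL(\Phi)|_\infty=|L(\Phi)|_\infty\le|\Phi|_\infty$, iterate this one-step bound and sum the geometric series $\sum_i(C_E\eta)^{i\beta}$. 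This completely avoids what you identify as the delicate points: there is no need to unwind $\mathcal{L}^n$ into branches of $T^n$, no need to re-run the telescoping estimate of Lemma~\ref{lemma:technical} with $\eta$ replaced by $C_E\eta$, and no need to redo the truncation/limit argument with the operators $\mathcal{L}_k$ (which is in fact messier than you suggest, since $E^{-1}$ does not commute with the projections $\pi_k$ the way $\bar\tau$ does). Your direct expansion would also yield a bound of the stated form, with the potential-variation term controlled by a geometric series exactly as you describe, so the approach is not wrong --- it is essentially a re-proof of the uncoupled theorem with interleaved $C_E$-contractions --- but it buys nothing over the induction, which gets the result from facts already proved. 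One small slip in your accounting: the $C_E^\beta$ prefactor in the statement sits on the $|\Phi|_\infty$ term (it comes from the factor $C_E^\beta$ produced at each application of the one-step bound), not from ``the remaining $E^{-1}$ at the outermost level'' acting on the $|\Phi|_\beta$ term, whose coefficient is exactly $(C_E\eta)^{\beta n}$ with no extra factor; this does not affect the validity of your argument since such constants are absorbed into $C_6$.
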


\begin{proof} By Lemma~3.10 ($n=1$)
\begin{eqnarray*}
|\LL(\Phi)(x) - \LL(\Phi)(y)| &  = & |L(\Phi)(E^{-1}(x)) - L(\Phi)(E^{-1}(y))|\\
&\leq & |L(\Phi)|_\beta  d(E^{-1} x, E^{-1} y)^\beta\\
&\leq & \left(   |\Phi|_\beta \eta^\beta   + C_5 |\Phi|_\infty  \right) C_E^\beta d(x, y)^\beta
\end{eqnarray*}
and so
\[
|\LL(\Phi)|_\beta  \leq|\Phi|_\beta (C_E \eta )^\beta + C_5 C_E^\beta |\Phi|_\infty.
\]

Assume the formula for $n-1$. Since $|\LL(\Phi)|_\infty  = |L(\Phi)|_\infty \leq |\Phi|_\infty$, we obtain inductively
\begin{eqnarray*}
|\LL^n(\Phi)|_\beta  &\leq & |\LL^{n-1}(\Phi)|_\beta  (C_E \eta )^\beta + C_5 C_E^\beta |\Phi|_\infty\\
&\leq & |\Phi|_\beta (C_E \eta )^{n\beta} 
+ C_5 |\Phi|_\infty C_E^\beta \sum_{i = 1}^\infty (C_E \eta )^{\beta i} + C_5 C_E^\beta |\Phi|_\infty\\
&=& |\Phi|_\beta (C_E \eta )^{n\beta} 
+ C_5 |\Phi|_\infty C_E^\beta \sum_{i = 0}^\infty (C_E \eta )^{\beta i}.
\end{eqnarray*}
\end{proof}

Let us note that $C_4=C_5=C_6$. 
We record, for future use, two elementary topological properties of the map $T:\Omega\to\Omega$. 

\begin{lemma} \label{l:dense-back-orbit}
For each $x\in\Omega$, $\set{T^{-n}x, n\geq 0}$ is dense in $\Omega.$  
\end{lemma}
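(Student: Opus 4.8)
The statement to prove is Lemma~\ref{l:dense-back-orbit}: for each $x \in \Omega$, the set $\{T^{-n}x : n \geq 0\}$ is dense in $\Omega$. The plan is to reduce this to the corresponding density statement for the uncoupled map $\bar\tau$, which follows from the full-branch and expansion assumptions ($\tau$\ref{tau1}) and ($\tau$\ref{tau2}), and then transport it along the homeomorphism $E$. Recall that $T = E \circ \bar\tau$, so $T^{-1} = \bar\tau^{-1} \circ E^{-1}$ and more generally $T^{-n} = \bar\tau^{-1} \circ E^{-1} \circ \bar\tau^{-1} \circ E^{-1} \circ \cdots$ ($n$ times). Since $E^{-1}$ is a bijection, the preimage set $T^{-n}x$ has the same cardinality structure as $\bar\tau^{-n}(E^{-1}x)$ at the outermost level, but the composition is not simply $\bar\tau^{-n}E^{-n}x$; however, a cleaner route is available.

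First I would establish density of $\bigcup_{n\geq 0}\bar\tau^{-n}y$ in $\Omega$ for every $y\in\Omega$. Fix $z \in \Omega$ and $\varepsilon > 0$; choose $k$ with $\theta^k < \varepsilon$ (using the metric ($d$1)). Because $\tau$ has full branches, for each coordinate $i$ with $|i| \leq k$ there is an inverse branch of $\tau^n$ (for $n$ large, using expansion ($\tau$\ref{tau2}) so that the branch images have diameter $\leq \eta^n < \varepsilon$) whose image contains a point within $\varepsilon$ of $z_i$; since $\bar\tau$ acts coordinate-wise, one assembles an inverse branch $\zeta$ of $\bar\tau^n$ with $d(\zeta_y, z) < \varepsilon$ once $\eta^n \theta^0 < \varepsilon$ and the coordinates beyond $\Lambda_k$ contribute at most $\theta^{k+1} < \varepsilon$. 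This gives the claim for $\bar\tau$.

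Next I would handle $E$. The key observation is that $E$ being a homeomorphism of $\Omega$ (its inverse is Lipschitz with constant $C_E$ by Assumption (E\ref{E1}) applied with $n=0$, and one should note $E$ itself is continuous — this may need to be assumed or is implicit in "$E^{-1}$ exists" together with compactness of $\Omega$, which makes a continuous bijection a homeomorphism) implies that $E$ maps dense sets to dense sets. I would argue directly: given $z\in\Omega$ and $\varepsilon>0$, I want some $n$ and some $w$ with $T^nw = x$ and $d(w,z)<\varepsilon$. Set $y = E^{-1}x$. By the $\bar\tau$-density just proved, there is $n$ and an inverse branch $\zeta$ of $\bar\tau^n$ with $\bar\tau^n(\zeta_y) = y$, i.e. $E(\bar\tau^n(\zeta_y)) = x$, and with $d(\zeta_y, z)$ small. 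But $E(\bar\tau^n(\cdot)) = T\circ \bar\tau^{n-1}(\cdot)$ is only $T^1$ composed with $\bar\tau^{n-1}$, not $T^n$. To get genuine $T$-preimages I instead iterate: $T^{-n}x = \bar\tau^{-1}E^{-1}\cdots\bar\tau^{-1}E^{-1}x$. Using that each $E^{-1}$ is bijective and each $\bar\tau^{-1}$ has full branches, I can choose the branches of $\bar\tau^{-1}$ successively (from the outside in) so that the accumulated contraction is at worst $(C_E\eta)^n$ per the Lasota–Yorke-type estimates already in the paper, and by ($\tau$\ref{tau1}) every point of $\Omega$ lies within $(C_E\eta)^n + \theta^{k}$ of some such fully-specified preimage once $k$ is large. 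Since $C_E\eta < 1$ by the standing assumption $C_E \in (0,\eta^{-1})$, this can be made $<\varepsilon$.

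The main obstacle I anticipate is bookkeeping the interleaved composition $T^{-n} = (\bar\tau^{-1}E^{-1})^n$ correctly: one must verify that choosing inverse branches of $\bar\tau$ at each stage, while applying $E^{-1}$ in between, still yields points whose spread over any finite coordinate window $\Lambda_k$ can be made arbitrarily small and, crucially, can be steered to approximate an arbitrary target $z$. This is where full branches of $\tau$ on each coordinate, combined with the uniform contraction bound $C_E\eta < 1$ for $E^{-1}\circ(\text{branch of }\bar\tau^{-1})$ coming from (E\ref{E1}) and ($\tau$\ref{tau2}), do the work. Everything else — the metric estimates, the coordinate-wise structure of $\bar\tau$ — is routine given the assumptions already in place.
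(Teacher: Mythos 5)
Your final argument --- interleaving inverse branches of $\bar\tau$ with $E^{-1}$, using the per-step contraction factor $C_E\eta<1$ coming from assumption (E1) and ($\tau$2) together with full branches, so that the $T^{-n}$-preimages of $x$ are $(C_E\eta)^n$-dense with the far coordinates controlled by the weights $\theta^{|i|}$ --- is essentially the paper's own proof, which expresses the same estimate by saying the partition generated by $T^{-n}(x)$ on each node has mesh at most $(C_E\eta)^n$. Your initial detour (proving density of $\bar\tau$-preimages of $E^{-1}x$ and trying to transport it through $E$) indeed does not produce genuine $T$-preimages, but you recognize this yourself and discard it, so the proposal as it stands is correct and follows the same route as the paper.
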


\begin{proof}
Let $x\in\Omega$, and let $\epsilon > 0.$ Since $\set{\tau^{-1}x_0}\cup \set{0,1}$ forms a partition of $[0,1]$ with mesh $< \eta$, $\bar\tau^{-1}(E^{-1} x)|_{\Lambda_0}$ also forms a partition with mesh smaller than $C_E\eta < 1$. On iterating, we see that $T^{-n}(x)|_{\Lambda_0}$ generates a partition with mesh less than $(C_E\eta)^n.$ This remains true on each node of the lattice; denoting by $\Delta_i$ the mesh of the $i$th node of the lattice and defining $\Delta = \sup_{i} \theta^{|i|} \Delta_i $, we see that there will be a pre-image of $x$ within $\epsilon$ of an arbitrarily chosen point $y$ provided $n\geq \log(1/\epsilon)/ \log (1/C_E\eta)$.
\end{proof}

\begin{lemma} \label{l:top-trans}
If $U$ and $V$ are any open subsets of $\Omega$, then there exists an $N\in\mathbb{Z^+}$ such that for all $n\geq N$, $U\cap T^n V \neq \emptyset.$
\end{lemma}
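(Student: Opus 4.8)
The plan is to reduce topological transitivity to the density result of Lemma~\ref{l:dense-back-orbit}. Given nonempty open sets $U$ and $V$, first I would fix a point $y\in V$ and choose $\delta>0$ small enough that the ball $B(y,\delta)\subset V$. Then I would pick any point $x\in U$ and a radius $\epsilon>0$ with $B(x,\epsilon)\subset U$. The key observation is that Lemma~\ref{l:dense-back-orbit} gives more than mere density of $\bigcup_{n\ge 0}T^{-n}x$: inspecting its proof, for every $n\ge \log(1/\delta)/\log(1/C_E\eta)$ the set $T^{-n}(x)$ is already $\delta$-dense in $\Omega$ (the backward images at step $n$ form a partition of mesh at most $(C_E\eta)^n$ on each node, hence mesh at most $(C_E\eta)^n$ in the metric $d$ after the $\theta^{|i|}$ weighting, since $C_E\eta<1$). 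So I would record this uniform statement as the real content extracted from the earlier lemma.

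Next, set $N=\lceil \log(1/\delta)/\log(1/C_E\eta)\rceil$ and fix any $n\ge N$. By the uniform density just noted, there exists $z\in T^{-n}(x)$ with $d(z,y)<\delta$, i.e.\ $z\in B(y,\delta)\subset V$ and $T^n z = x\in U$. Hence $x\in T^n V$, so $x\in U\cap T^n V$, proving $U\cap T^n V\neq\emptyset$ for all $n\ge N$. That is exactly the assertion of the lemma.

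The only point requiring a little care — and the one I expect to be the main (minor) obstacle — is making the quantitative ``$\delta$-dense'' claim genuinely uniform in the target point $y$, rather than the pointwise density literally stated in Lemma~\ref{l:dense-back-orbit}. This is not really an obstruction, since the cited proof already produces a partition of $\Omega$ of controlled mesh at level $n$, which is a uniform statement; I would simply reorganize that proof's conclusion to say ``$T^{-n}(x)$ is $(C_E\eta)^n$-dense'' (using compactness of $\Omega$ and the product structure to pass from per-node meshes $\Delta_i$ to the global mesh $\Delta=\sup_i\theta^{|i|}\Delta_i\le (C_E\eta)^n$). With that reformulation in hand the proof is a two-line argument, and no estimate beyond comparing $(C_E\eta)^n$ with $\delta$ is needed.
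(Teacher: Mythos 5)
Your proof is correct and follows essentially the same route as the paper: both arguments rest on the uniform mesh estimate from the proof of Lemma~\ref{l:dense-back-orbit}, namely that $T^{-n}(x)$ is $(C_E\eta)^n$-dense, so that for $n$ large a preimage of $x\in U$ lands in a ball contained in $V$. The only difference is cosmetic: the paper first locates a preimage $T^{-n_1}x\in V$ and centers the ball there, whereas you center the ball at an arbitrary $y\in V$, which slightly streamlines the same argument.
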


\begin{proof}
Let $x\in U$. Since $\set{T^{-n}x, n\geq 0}$ is dense in $\Omega$, there exists an $n_1$ such that $T^{-n_1}x\in V.$ Since $V$ is open, there exists a $\delta>0$ such that $B_\delta(T^{-n_1}x)\in V.$ Choose $n_2  \geq \log(2/\delta)/\log(1/C_E\eta)$. Then for $n\geq n_2$, there always exists an element of $\set{T^{-n}x}$ inside $B_\delta(T^{-n_1}x)$ since the mesh $\Delta$ as defined above is less than $\delta$ for $n\geq n_2.$ Hence $T^{-n} U \cap V \neq \emptyset \;\forall \; n\geq n_2.$
\end{proof}

We note that the above lemmas imply that the map $T$ is topologically mixing, and since $\Omega$ is compact, forward transitive. Further, the map $T$ is expansive, namely, if $x\in\Omega$ and $y\in\Omega$ are such that $T^n x = T^n y$ for every $n\in\mathbb{N}$ then $x = y.$

\section{Spectral properties of $\mathcal{L}$.} \label{sec:mathL}

\subsection{Ionescu-Tulcea and Marinescu Theorem.}
In this section we recall the classical machinery of quasi-compactness that is used to establish the statistical properties of a deterministic dynamical system, once a Lasota-Yorke type inequality (see Theorem \ref{t:LY-perturb}) can be established. The main ingredient is the theorem by Ionescu-Tulcea and Marinescu.

\begin{defn}
Let $L$ be an operator on a Banach space $(V, \| \cdot \|)$. $L$ is {\em quasi-compact} if there exists a positive integer $r$ and a compact operator $K$ such that $\| L^r -K \| <1.$
\end{defn}

If $L$ is quasi-compact, then $V = F \oplus H $ with $F$ and $H$ invariant under $L$, $\dim F <\infty$, $r(L|_H) < r(L)$ and each eigenvalue of $L|_F$ has modulus $r(L)$, where $r(\cdot)$ denotes the spectral radius.

As noted in \cite{MR1461536, MR2518822}, quasi compactness can be restated in many equivalent ways. We give below the following definition \cite{MR2518822} as this is the form in which we will use quasi-compactness.

\begin{defn}[Theorem 2.5.3, \cite{MR2518822}]
$L:V\to V$ is quasi-compact if and only if there are bounded linear operators $\set{Q_{\sigma}:\sigma\in\Upsilon}$ and $R$ on $V$ such that 
\begin{eqnarray*}
L^n &=& \sum_{\sigma\in\Upsilon}\sigma^n \phi_\sigma + R^n \quad \forall \quad n = 1, 2, \dots, \\
\phi_\sigma \phi_{\sigma'} &=& 0 \text{ if } \sigma \neq \sigma'\\
\phi^2_\sigma &=& \phi_\sigma, \quad \forall \quad \sigma \in \Upsilon\\
\phi_\sigma R =R\phi_\sigma &=& 0 \quad \forall \quad \sigma\in\Upsilon\\
\phi_\sigma V &=& D(\sigma), \quad \forall \quad \sigma\in\Upsilon\\
r(R) &<&1
\end{eqnarray*}
where $\Upsilon$ is the set of the eigenvalues of $L$ with modulus 1, $D(\sigma) = \set{f\in V: Lf = \sigma f}$ is the eigen-space of $L$ corresponding to the eigenvalue $\sigma$ and $r(R):= \lim_{n\to\infty} \|R^n\|^{1/n}$ is the spectral radius of $R$.
\end{defn}

Next, we recall a version of the Ionescu-Tulcea and Marinescu theorem, established by Hennion and Herv\'e\cite[Theorem II.5]{MR1862393}.

\begin{thm}
Let $|\cdot|$ be a continuous semi-norm on a Banach space $(V, \| \cdot \|)$ and let $Q$ be a bounded operator on $V$ such that
\begin{enumerate}
\item \[
Q(\set{f: f\in V, \| f\| \leq 1})
\] is conditionally compact in $(V, |\cdot|)$
\item there exists a constant $M$ such that for all $f\in V$, $|Q(f)| \leq M |f|$
\item there exists a $k\in\mathbb{N}$ and real numbers $r$ and $R$ such that $r<r(Q)$ and for all $f\in V$
\[
\|Q^k f\| \leq R |f| + r^k \|f\|.
\]
\end{enumerate} 
Then $Q$ is quasi-compact.
\end{thm}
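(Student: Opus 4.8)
The statement is the Ionescu--Tulcea--Marinescu theorem in the Hennion--Herv\'e form, and the plan is to bound the essential spectral radius $r_{\mathrm{ess}}(Q)$ by $r$ using the Kuratowski measure of non-compactness together with Nussbaum's formula $r_{\mathrm{ess}}(Q)=\lim_{m\to\infty}\kappa(Q^m(B))^{1/m}$, where $B=\{f\in V:\|f\|\le1\}$ and $\kappa(\cdot)$ denotes the measure of non-compactness with respect to $\|\cdot\|$. Once $r_{\mathrm{ess}}(Q)\le r<r(Q)$ is established, the part of the spectrum lying in the annulus $r<|z|\le r(Q)$ consists of finitely many eigenvalues of finite algebraic multiplicity, and the associated Riesz spectral projection produces a decomposition $V=F\oplus H$ with $\dim F<\infty$ and $r(Q|_H)<r(Q)$; this is precisely quasi-compactness in the spectral form recorded after the first definition above.

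First I would iterate the Lasota--Yorke inequality (3). Combining $\|Q^kf\|\le R|f|+r^k\|f\|$ with the iterate of (2), namely $|Q^jf|\le M^j|f|$, an induction on $n$ yields
\[
\|Q^{nk}f\|\le R_n|f|+r^{nk}\|f\|,\qquad R_n=R\sum_{j=0}^{n-1}M^{(n-1-j)k}r^{jk}.
\]
The only features that matter later are that $R_n$ is finite for each fixed $n$ and that the coefficient of $\|f\|$ decays like $r^{nk}$; the possible growth of $R_n$ in $n$ is harmless because it will always be paired with a quantity sent to $0$.

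Next I would record the compactness input: since $Q(B)$ is conditionally (i.e.\ totally) bounded in $(V,|\cdot|)$ by (1), and each $Q^j$ is $|\cdot|$-to-$|\cdot|$ continuous by (2), every image $Q^m(B)$ with $m\ge1$ is totally bounded in $(V,|\cdot|)$. Fixing $n$ and $\delta>0$, I would cover the totally bounded set $Q^k(B)$ by finitely many pieces $A_1,\dots,A_p$ of $|\cdot|$-diameter at most $\delta$, each of $\|\cdot\|$-diameter at most $2\|Q^k\|$. Applying the iterated inequality to differences $g-g'$ with $g,g'\in A_i$ bounds the $\|\cdot\|$-diameter of $Q^{(n-1)k}(A_i)$ by $R_{n-1}\delta+2\|Q^k\|r^{(n-1)k}$. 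Since $Q^{nk}(B)=\bigcup_{i}Q^{(n-1)k}(A_i)$ is a finite union, $\kappa(Q^{nk}(B))\le R_{n-1}\delta+2\|Q^k\|r^{(n-1)k}$, and letting $\delta\to0$ gives $\kappa(Q^{nk}(B))\le 2\|Q^k\|r^{(n-1)k}$.

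Feeding this into Nussbaum's formula along the subsequence $m=nk$ yields $r_{\mathrm{ess}}(Q)\le\lim_{n\to\infty}\bigl(2\|Q^k\|r^{(n-1)k}\bigr)^{1/(nk)}=r<r(Q)$, which finishes the argument. I expect the second step to be the main obstacle: the unit ball $B$ is \emph{not} itself $|\cdot|$-precompact, so the measure-of-non-compactness estimate cannot be read off directly from (3); the resolution is to peel off one factor $Q^k$ first so as to land in the precompact set $Q^k(B)$, and only then apply the decay inequality to differences within the small-diameter pieces. The final passage from $r_{\mathrm{ess}}(Q)<r(Q)$ to the spectral decomposition is standard Riesz--Fredholm theory and can be quoted from the references cited just above.
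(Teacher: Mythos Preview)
The paper does not prove this theorem; it is merely quoted from Hennion and Herv\'e \cite[Theorem~II.5]{MR1862393} as background material, so there is no ``paper's own proof'' to compare against. Your argument is correct and is in fact essentially Hennion's original approach: bound the essential spectral radius via Nussbaum's formula and the Kuratowski (or ball) measure of non-compactness, using one application of $Q^k$ to land in the $|\cdot|$-precompact set $Q^k(B)$ before invoking the iterated Lasota--Yorke inequality on differences. The step you flag as the main obstacle---that $B$ itself is not $|\cdot|$-precompact, forcing you to peel off one factor of $Q^k$ first---is exactly the point of the argument, and you handle it correctly. The only items to cite rather than reprove are Nussbaum's formula for $r_{\mathrm{ess}}$ and the Riesz--Fredholm consequence that $r_{\mathrm{ess}}(Q)<r(Q)$ yields the finite-dimensional spectral decomposition; both are standard and available in the reference the paper already cites.
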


\subsection{Quasi-compactness and other properties of $\mathcal{L}$, $P$.}

\begin{lemma}
$\mathcal{L}$ is quasi-compact.
\end{lemma}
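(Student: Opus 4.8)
The plan is to verify the three hypotheses of the Hennion--Herv\'e version of the Ionescu-Tulcea--Marinescu theorem quoted above, applied to the operator $Q=\mathcal{L}$ on the Banach space $(\mathcal{C},\|\cdot\|)$ with the continuous semi-norm taken to be $|\cdot|_\infty$. The choice of $|\cdot|_\infty$ as the auxiliary semi-norm is the standard one in this circle of ideas, and the key analytic input, the Lasota--Yorke inequality, is exactly Theorem~\ref{t:LY-perturb}; since $C_E\eta<1$ we may pick $k$ large enough that $(C_E\eta)^{\beta k}<1=r(\mathcal{L})$, which will supply hypothesis~(3).

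First I would record that $\mathcal{L}$ is a bounded operator on $\mathcal{C}$: boundedness in $|\cdot|_\infty$ is immediate from $|\mathcal{L}(\Phi)|_\infty=|L(\Phi)|_\infty\le|\Phi|_\infty$, and boundedness of the $\beta$-H\"older part follows from the $n=1$ case of Theorem~\ref{t:LY-perturb}. Hypothesis~(2), the inequality $|\mathcal{L}(f)|_\infty\le M|f|_\infty$, then holds with $M=1$. For hypothesis~(3) I would iterate Theorem~\ref{t:LY-perturb}: for any $f\in\mathcal{C}$,
\[
\|\mathcal{L}^k f\|=|\mathcal{L}^k f|_\infty+|\mathcal{L}^k f|_\beta\le |f|_\infty+|f|_\beta(C_E\eta)^{\beta k}+C_6 C_E^\beta\Bigl(\sum_{i=0}^\infty(C_E\eta)^{i\beta}\Bigr)|f|_\infty,
\]
which is of the required form $\|\mathcal{L}^k f\|\le R\,|f|_\infty+r^k\|f\|$ with $r=(C_E\eta)^\beta<1$ and $R$ a constant absorbing the remaining $|f|_\infty$ coefficients, once $k$ is chosen so that $r^k<1$; note $r(\mathcal{L})=1$ since $\mathcal{L}(\mathds1)=\mathds1$ and $|\mathcal{L}^n\Phi|_\infty\le|\Phi|_\infty$, so $r<r(\mathcal{L})$ as demanded.

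The remaining and genuinely substantive point is hypothesis~(1): the image under $\mathcal{L}$ of the unit ball $\{f\in\mathcal{C}:\|f\|\le1\}$ must be conditionally (pre)compact in $(\mathcal{C},|\cdot|_\infty)$, i.e.\ totally bounded in the sup norm. I would deduce this from an Arzel\`a--Ascoli argument: if $\|f\|\le1$ then $|f|_\infty\le1$, and by the $n=1$ Lasota--Yorke bound $|\mathcal{L}(f)|_\beta$ is bounded by a constant independent of $f$, so the family $\{\mathcal{L}(f):\|f\|\le1\}$ is uniformly bounded and uniformly $\beta$-H\"older, hence equicontinuous, on the compact metric space $(\Omega,d)$. (Here one uses that $\Omega$ is compact, as established earlier in the excerpt.) Arzel\`a--Ascoli then gives relative compactness in $(C(\Omega),|\cdot|_\infty)$, which is precisely the conditional compactness required. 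The only mild subtlety is that the relevant equicontinuity modulus is controlled by the full sup-over-$k$ H\"older seminorm $|\cdot|_\beta$ rather than by a pointwise H\"older constant on $\Omega$; but the definition of $|\Phi|_\beta$ as $\sup_k|\Phi_k|_\beta$ together with the fact that $\Phi_k\to\Phi$ uniformly for $\Phi\in\mathcal{C}$ (because $\var_k(\Phi)\le|\Phi|_\beta\alpha^k\to0$) lets one pass this control to $\Phi$ itself, so equicontinuity on $\Omega$ follows. With all three hypotheses checked, Hennion--Herv\'e yields that $\mathcal{L}$ is quasi-compact.
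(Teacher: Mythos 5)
Your proposal is correct and follows essentially the same route as the paper: both verify the three hypotheses of the Hennion--Herv\'e theorem for $Q=\mathcal{L}$ on $(\mathcal{C},\|\cdot\|)$ with auxiliary semi-norm $|\cdot|_\infty$, using $|\mathcal{L}\Phi|_\infty\le|\Phi|_\infty$ for hypothesis~(2) and Theorem~\ref{t:LY-perturb} together with $\mathcal{L}\mathds1=\mathds1$ (so $r(\mathcal{L})\ge1>(C_E\eta)^\beta$) for hypothesis~(3). The only difference is that the paper disposes of hypothesis~(1) in one line (the unit ball is mapped into a $\|\cdot\|$-bounded set), whereas you spell out the Arzel\`a--Ascoli argument on the compact space $\Omega$, including the passage from the sup-over-$k$ seminorm $|\cdot|_\beta$ to an actual H\"older modulus for $\Phi$, which is a welcome, slightly more careful rendering of the same step.
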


\begin{proof}
We will take $|\cdot | = |\cdot |_\infty$ and $\| \cdot \| = |\cdot|_\beta + | \cdot |_\infty.$ The set $\mathcal{C}$ is a Banach space under $\|\cdot\|. $ Clearly, condition 1 is satisfied since the unit ball of $(V, \| \cdot \|)$ is mapped under $\mathcal{L}$ inside a ball of finite radius. Condition 2 is true because $|\mathcal{L}(\Phi)|_\infty \leq |\Phi|_\infty.$ Condition 3 follows very easily from Theorem \ref{t:LY-perturb} and the observation that 
$\mathcal{L}\mathds1 = \mathds1$ implies that $r(\mathcal{L}) \geq 1$.
\end{proof}

The next sequence of lemmas establish that 1 is the unique eigenvalue of $\mathcal{L}$ on the unit disk, and that 1 is a simple eigenvalue. The proofs of these statements follow standard lines (see, for instance, the proof of the Ruelle-Perron-Frobenius Theorem, \cite{MR1793194}), and require that the map $T$ be forward transitive, and topologically mixing. We have to, however, account for the non-standard definition of $\mathcal{L}$.

As a technical point, we observe that the operators $\mathcal{L}_k$ defined as
\[
\mathcal{L}_k(\Phi)(x) = L_k(\Phi)(E^{-1}x)
\] are point-wise approximations to the operator $\mathcal{L}$, that is, $\lim_{k\to\infty}\mathcal{L}_k(\Phi)(x) = \mathcal{L}(\Phi)(x)$ for each $\Phi\in\mathcal{C}$ and $x\in\Omega.$

\begin{lemma}\label{l:1-simple-L}
$1$ is a simple eigenvalue for $\mathcal{L}$ with eigenfunction $\mathds1$.
\end{lemma}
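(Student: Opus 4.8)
The plan is to adapt the classical argument that establishes simplicity of the leading eigenvalue for a normalized transfer operator (as in the Ruelle-Perron-Frobenius theorem), carefully working around the non-standard definition $\mathcal{L}(\Phi)(x) = L(\Phi)(E^{-1}x)$ and the fact that $\mathcal{L}$ is defined only as a pointwise limit of the $\mathcal{L}_k$. First I would record that $\mathcal{L}\mathds{1} = \mathds{1}$, so $1$ is indeed an eigenvalue with $\mathds{1}$ an eigenfunction; this is immediate from $L\mathds{1}=\mathds{1}$ together with the definition of $\mathcal{L}$. The content of the lemma is therefore that the $1$-eigenspace $D(1) = \{\Phi\in\mathcal{C} : \mathcal{L}\Phi = \Phi\}$ is one-dimensional, i.e. every fixed point of $\mathcal{L}$ is a constant multiple of $\mathds{1}$.

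The main step is to show that if $\Phi\in\mathcal{C}$ satisfies $\mathcal{L}\Phi=\Phi$ and $\Phi$ is real-valued, then $\Phi$ is constant. I would proceed by a maximum-principle / minimum-principle argument. Iterating, $\mathcal{L}^n\Phi=\Phi$ for all $n$, and by Theorem~\ref{t:LY-perturb} the sequence $\mathcal{L}^n\Phi$ has uniformly bounded H\"older seminorm, while $|\mathcal{L}^n\Phi|_\infty \le |\Phi|_\infty$; thus $\Phi$ itself lies in a compact subset of $\mathcal{C}$ in the $|\cdot|_\infty$ topology — but more usefully, the Lasota-Yorke bound applied to $\mathcal{L}^n\Phi = \Phi$ gives no decay by itself, so instead I would use the positivity and contraction structure directly. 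Write $M = \max_\Omega \Phi$ and suppose $\Phi(x_0)=M$ for some $x_0$ (using compactness of $\Omega$). Expanding $\Phi(x_0) = \mathcal{L}^n\Phi(x_0) = L^n(\Phi)(E^{-n}x_0)$ — here I need the identity $\mathcal{L}^n(\Phi)(x) = L^n(\Phi)(E^{-n}x)$, which follows from the intertwining relation $\mathcal{L}(\Phi\circ T\cdot\Psi)=\Phi\cdot\mathcal{L}(\Psi)$ together with $T=E\circ\bar\tau$, or more directly by induction on the definition — and $L^n(\Phi)(y) = \lim_k \frac{1}{b_k}\sum_{|\zeta|=k} e^{g^{(n)}(i_k\zeta_y)}\Phi(i_k\zeta_y)$ with $\sum$ of the weights converging to $L^n(\mathds{1})(y)=1$. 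Since $\Phi\le M$ on all the preimage points and the normalized weights sum to $1$, equality $\Phi(x_0)=M$ forces $\Phi(i_k\zeta_{E^{-n}x_0}) = M$ for (essentially) all inverse branches $\zeta$ of $\bar\tau^n$; letting $k\to\infty$ and $n\to\infty$ and invoking Lemma~\ref{l:dense-back-orbit} (density of the backward orbit $\{T^{-n}x_0\}$ in $\Omega$) together with continuity of $\Phi$, we conclude $\Phi\equiv M$, i.e. $\Phi$ is constant.

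To finish, I would handle the general (complex-valued) case: if $\mathcal{L}\Phi=\Phi$ with $\Phi$ complex, then $\overline{\mathcal{L}\Phi}=\mathcal{L}\bar\Phi$ since all the weights $e^{g^{(n)}}$ are real and positive, so $\mathrm{Re}\,\Phi$ and $\mathrm{Im}\,\Phi$ are each real fixed points, hence each constant by the above; thus $\Phi$ is constant. This shows $\dim D(1)=1$. Simplicity in the stronger sense (that $1$ is a simple eigenvalue, i.e. there is no nontrivial Jordan block) follows because $\mathcal{L}$ is quasi-compact with $r(\mathcal{L})=1$ and $\mathcal{L}$ preserves the cone of non-negative functions with $\mathcal{L}\mathds{1}=\mathds{1}$: the eigen-projection onto $D(1)$ is $\Phi\mapsto \mu(\Phi)\mathds{1}$ where $\mu$ is the $\mathcal{L}^*$-fixed probability measure, and one checks $\mathcal{L}^n\Phi\to\mu(\Phi)\mathds{1}$ for the eigen-projection being rank one with no nilpotent part; alternatively, a Jordan block of size $\ge 2$ would produce a function $\Psi$ with $\mathcal{L}\Psi = \Psi + \mathds{1}$, whence $\mathcal{L}^n\Psi = \Psi + n\mathds{1}$, contradicting $|\mathcal{L}^n\Psi|_\infty\le|\Psi|_\infty$.

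The main obstacle I anticipate is the careful bookkeeping in the maximum-principle step: because $\mathcal{L}$ (and $L$) are only pointwise limits of the finite-lattice operators $\mathcal{L}_k$, and the weights in $L_k^n$ do not sum exactly to $L_k^n(\mathds{1})$ before taking $k\to\infty$, one must be slightly delicate in extracting the conclusion "$\Phi = M$ at all relevant preimages" — the right way is to first pass to the limit to get the genuine operator $L^n$ (using the Corollary after the definition of $L_k$, and the analogous statement for $\mathcal{L}_k$), and only then run the equality-in-a-convex-combination argument, using that $L^n(\mathds{1})=\mathds{1}$ exactly. The density of backward orbits (Lemma~\ref{l:dense-back-orbit}) and continuity of $\Phi$ then close the argument without any further subtlety.
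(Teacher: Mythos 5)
Your overall strategy is sound and close in spirit to the paper's, but one step you explicitly lean on is false as stated: the identity $\mathcal{L}^n(\Phi)(x) = L^n(\Phi)(E^{-n}x)$. Since $\mathcal{L}\Phi = L(\Phi)\circ E^{-1}$, iterating gives $\mathcal{L}^2\Phi = L\bigl(L(\Phi)\circ E^{-1}\bigr)\circ E^{-1}$, which is not $L^2(\Phi)\circ E^{-2}$ because $L$ does not commute with composition by $E^{-1}$; the intertwining relation $\mathcal{L}(\Phi\circ T\cdot\Psi)=\Phi\cdot\mathcal{L}(\Psi)$ only identifies $\mathcal{L}^n$ as a transfer operator for $T^n$, not as $L^n$ followed by $E^{-n}$. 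The correct representation --- and the one the paper itself uses --- is $\mathcal{L}^n\Phi(x)=\lim_{k\to\infty}\mathcal{L}^n_k\Phi(x)$, a positive weighted sum over $T^n$-preimages of $x$ whose total weight $\mathcal{L}^n_k\mathds1(x)$ tends to $1$. With that substitution your maximum-principle step goes through unchanged, since Lemma~\ref{l:dense-back-orbit} concerns density of $T$-backward orbits, which is exactly what you need. Moreover, the ``equality in a convex combination with sign-changing $\Phi$ and weights not summing exactly to one'' bookkeeping you flag can be bypassed entirely: set $\Psi:=(\max_\Omega\Phi)\mathds1-\Phi$, which is a non-negative fixed point of $\mathcal{L}$ vanishing at the maximizer, and conclude $\Psi\equiv0$ from positivity of the weights, density of backward orbits and continuity --- this is literally the paper's vanishing-point argument applied to $\Psi$.

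Apart from this repair, your route differs from the paper's only mildly, and arguably for the better: the paper first writes a real eigenfunction as a difference of two non-negative eigenfunctions (Ces\`aro averages of $\mathcal{L}^j\psi^{\pm}$ plus Arzel\`a--Ascoli), shows a non-negative eigenfunction with a zero vanishes identically, and then gets geometric simplicity via the infimum trick $t=\inf\phi/\psi$; your maximum principle collapses these into the single statement that every real fixed point is constant. Your reduction of the complex case to the real case (conjugation commutes with $\mathcal{L}$ since the weights are real and positive) matches the paper's ``real linear operator'' remark, and your second argument for algebraic simplicity ($\mathcal{L}\Psi=\Psi+k\mathds1$ forces $\mathcal{L}^n\Psi=\Psi+nk\mathds1$, contradicting $|\mathcal{L}^n\Psi|_\infty\le|\Psi|_\infty$) is exactly the paper's. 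Your first suggested route to algebraic simplicity, via the eigenprojection $\Phi\mapsto\mu(\Phi)\mathds1$ having no nilpotent part, is circular as stated (it presupposes what is to be proved), so keep the Jordan-block argument.
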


\begin{proof}
First, we show that any eigenfunction $\psi$ for $\mathcal{L}$ to the eigenvalue $1$ is either zero, or nowhere vanishing. 

Since $\mathcal{L}$ is a real linear operator it is enough to consider real valued eigenfunctions.

First we show that any real valued eigenfunction can be written as a linear combination of non-negative eigenfunctions. 
Let $\psi^+$ and $\psi^-$ be the positive and negative parts of $\psi$. Since $\psi^{\pm} \leq |\psi|$, $\psi^\pm \in\mathcal{C}.$ Further, the set 
\[
F_+ := \set{\left. \frac{1}{n}\sum_{j = 0}^{n-1}\mathcal{L}^j\psi^{+} \right| n\geq 1}
\] has a bounded diameter in the Lipschitz norm and hence is equicontinuous and bounded in the $|\cdot| _\infty$ norm. The Arzel\`{a}-Ascoli theorem implies that  there exists a subsequence $n_j$ such that $\lim_{n_j\to\infty} \frac{1}{n_j} \sum_{j = 0}^{n_j-1} \mathcal{L}^j(\psi^+) \to \psi^+_\infty$ uniformly in the $|\cdot |_\infty$ norm. Clearly, $\psi^{+}_\infty\geq 0$, and $\mathcal{L}\psi^+_\infty = \psi^+_\infty.$ Finally, $|\psi^+_\infty|_\infty <\infty$ and by Theorem \ref{t:LY-perturb}, $|\psi^+_\infty|_\beta < \infty$; this implies that $\psi^+_\infty \in \mathcal{C}.$ A similar analysis can be performed for $\psi^-. $ It then follows from a straightforward diagonalization argument that 
\[
\psi = \lim_{n_j\to\infty} \frac{1}{n_j} \sum_{j = 0}^{n_j-1} \mathcal{L}^j \psi^+ - \frac{1}{n_j}\sum_{j = 0}^{n_j-1}\mathcal{L}^j \psi^{-} = \psi^+_\infty - \psi^-_\infty.
\]

For the rest of this proof, it will be assumed that all eigenfunctions are non-negative and real. Let $x\in\Omega$ be a point such that $\psi(x) = 0$, with $\psi$ an eigenfunction. Then
\[
0 = \psi(x) = \mathcal{L}^n (\psi)(x) = \lim_{k\to\infty} \mathcal{L}^n_k (\psi)(x) 
= \lim_{k\to\infty} \sum_{|\zeta|=k} \frac{1}{b_k} e^{g^{(n)}(i_k\zeta_x)} \psi( i_k\zeta_x).
\] By Lemma~\ref{l:dense-back-orbit} and the non-negativity and continuity of $\psi$, it follows that $\psi$ is identically $0$. 

To show that the geometric multiplicity of $1$ is $1$, suppose there are 
two positive real eigenfunctions $\phi$ and $\psi$  and put
\[
t = \inf_{x\in\Omega}\frac{\phi(x)}{\psi(x)},
\] 
which equals $\phi(z) / \psi(z)$ at some point $z\in\Omega$. Then
$ h(x) = \phi(x) - t \psi(x)$ is an eigenfunction  to the eigenvalue $1$, and $h(z) = 0$ which, by the 
previous paragraph, implies that $h \equiv 0.$ Therefore $\psi$ is some multiple of $\phi$.

Finally, we show that the algebraic multiplicity of $1$ is also 1. Suppose not. Then there exists a $\psi$ with $(1 - \mathcal{L})^2 \psi = 0$ but $(1 - \mathcal{L})\psi \neq 0. $ Since $(1 - \mathcal{L})\psi$ is an eigenvector for $\mathcal{L}$, we must have $\mathcal{L}\psi - \psi = k\mathds1$ for some $k\not=0$. Iteration yields
 $\mathcal{L}^n \psi = nk\mathds1 + \psi $ which contradicts the uniform boundedness of $\mathcal{L}^n$
 as $|\mathcal{L}^n\psi|_\infty\le|\psi|_\infty\;\forall n$.
\end{proof}

\begin{lemma}
Let $\mathscr{M}(\Omega)$ be the space of complex Radon measures on $\Omega$. The operator $\mathcal{L}^*:\mathscr{M}(\Omega)\to\mathscr{M}(\Omega)$ is well defined. There exists a unique probability measure $\nu$ such that $\mathcal{L}^*(\nu) = \nu.$
\end{lemma}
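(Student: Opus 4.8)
The plan is to establish three separate assertions: that $\mathcal{L}^*$ is a well-defined operator on $\mathscr{M}(\Omega)$, that there exists a probability measure fixed by $\mathcal{L}^*$, and that this fixed point is unique. For the first, since $\mathcal{L}$ is a bounded linear operator on $\mathcal{C} = C(\Omega)$ (with respect to the sup norm, as $|\mathcal{L}\Phi|_\infty \le |\Phi|_\infty$), its Banach-space adjoint acts on the dual. By the Riesz representation theorem the dual of $C(\Omega)$ is $\mathscr{M}(\Omega)$, so $\mathcal{L}^*$ is a well-defined bounded operator there, acting by $(\mathcal{L}^*\mu)(\Phi) = \mu(\mathcal{L}\Phi)$. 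One should note that $\mathcal{L}$ is a positive operator (since $L$ and hence $\mathcal{L}$ preserve non-negativity), so $\mathcal{L}^*$ maps positive measures to positive measures.

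Next I would produce a fixed probability measure by the same Schauder--Tychonoff argument already used in the excerpt for $P$: the set $\mathscr{M}$ of probability measures on $\Omega$ is convex and weak-$*$ compact (using compactness and separability of $\Omega$, as established earlier), and because $\mathcal{L}\mathds1 = \mathds1$ we have $(\mathcal{L}^*\mu)(\mathds1) = \mu(\mathds1) = 1$, so $\mathcal{L}^*$ already maps $\mathscr{M}$ into itself without any renormalisation. The map $\mu \mapsto \mathcal{L}^*\mu$ is weak-$*$ continuous since $\mathcal{L}\Phi \in C(\Omega)$ for every $\Phi \in C(\Omega)$. Schauder--Tychonoff then yields a fixed point $\nu \in \mathscr{M}$, i.e. $\mathcal{L}^*\nu = \nu$.

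For uniqueness I would combine the spectral information already in hand with a standard argument. By Lemma~\ref{l:1-simple-L} the eigenvalue $1$ of $\mathcal{L}$ is simple with one-dimensional eigenspace spanned by $\mathds1$, and by quasi-compactness (together with the forward transitivity and topological mixing of $T$ recorded after Lemma~\ref{l:top-trans}) there are no other eigenvalues of modulus $1$; hence $\mathcal{L}^n\Phi \to \nu(\Phi)\mathds1$ uniformly for every $\Phi \in \mathcal{C}$, where $\nu$ is the fixed functional normalised by $\nu(\mathds1)=1$. If $\nu'$ is any probability measure with $\mathcal{L}^*\nu' = \nu'$, then for all $\Phi$, $\nu'(\Phi) = \nu'(\mathcal{L}^n\Phi) \to \nu'(\nu(\Phi)\mathds1) = \nu(\Phi)$, forcing $\nu' = \nu$. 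Alternatively, one can avoid invoking the full convergence and argue directly: the fixed point $\nu$ of $\mathcal{L}^*$ is precisely the eigen-functional dual to the eigenfunction $\mathds1$; since the algebraic multiplicity of $1$ is one, the corresponding spectral projection $\Phi \mapsto \nu(\Phi)\mathds1$ has rank one, and its transpose $\mu \mapsto \mu(\mathds1)\nu$ is the rank-one spectral projection of $\mathcal{L}^*$ at $1$, so its fixed points are exactly the scalar multiples of $\nu$; among these only $\nu$ itself is a probability measure.

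I expect the main obstacle to be handling uniqueness cleanly in the presence of the non-standard definition $\mathcal{L}(\Phi)(x) = L(\Phi)(E^{-1}x)$: one must be sure that the quasi-compactness and the simplicity of the eigenvalue $1$ (both established for $\mathcal{L}$ directly, not merely for $L$) genuinely transfer to give the decomposition $\mathcal{C} = \mathbb{C}\mathds1 \oplus H$ with $r(\mathcal{L}|_H) < 1$, which is what powers the $\mathcal{L}^n\Phi \to \nu(\Phi)\mathds1$ step. Everything else is routine functional analysis: Riesz representation, Schauder--Tychonoff, and the positivity of $\mathcal{L}$.
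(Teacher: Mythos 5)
Your overall structure (well-definedness via duality, existence via a fixed-point theorem, uniqueness via spectral information) is sound, but your primary uniqueness route has a genuine gap: you assert that ``by quasi-compactness (together with the forward transitivity and topological mixing of $T$) there are no other eigenvalues of modulus $1$,'' and this is precisely the substantive content of the paper's proof, not a consequence one can simply cite. Quasi-compactness plus simplicity of the eigenvalue $1$ does not by itself exclude peripheral eigenvalues $\lambda\neq 1$, $|\lambda|=1$, and topological mixing only enters through an actual argument: the paper shows directly that $\mathcal{L}^n\psi\to\int\psi\,d\nu$ for every $\psi\in\mathcal{C}$ by taking a continuous accumulation point $\tilde\psi$ of $\mathcal{L}^n\psi$, showing $\sup\mathcal{L}^n\tilde\psi=\sup\tilde\psi$ is attained at points $x_n$, exploiting $\mathcal{L}^n_k\mathds1(x_n)\to 1$ to force $\tilde\psi$ to equal $\tilde\psi(x_0)$ on $\bigcup_n\{T^{-n}x_n\}$, and invoking Lemma~\ref{l:top-trans} to conclude $\tilde\psi$ is constant; the absence of other peripheral eigenvalues then follows because $\mathcal{L}^n\phi_\lambda=\lambda^n\phi_\lambda$ would contradict $\int\phi_\lambda\,d\nu=0$. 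You yourself flag this as ``the main obstacle,'' but flagging it is not supplying it. Note also the minor inaccuracy ``$\mathcal{C}=C(\Omega)$'': $\mathcal{C}$ is the H\"older class, and well-definedness of $\mathcal{L}^*$ on $\mathscr{M}(\Omega)$ requires extending $\mathcal{L}$ from $\mathcal{C}$ to $C(\Omega)$ using the sup-norm bound and density (Stone--Weierstrass), which is exactly how the paper proceeds.

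That said, your two deviations from the paper are legitimate and worth comparing. For existence, the paper argues tersely via ``$\mathcal{L}$ and $\mathcal{L}^*$ have the same spectrum,'' while your Schauder--Tychonoff argument on the weak-$*$ compact convex set of probability measures (using $\mathcal{L}\mathds1=\mathds1$ and positivity of the extension, so no renormalisation is needed) is a clean alternative that in fact handles positivity of the fixed functional more explicitly than the paper does. For uniqueness, your fallback argument via the rank-one spectral projection is essentially correct and genuinely different: since $1$ is an isolated, algebraically simple eigenvalue of $\mathcal{L}$ on $\mathcal{C}$, the adjoint projection has rank one, any measure fixed by $\mathcal{L}^*$ restricts on $\mathcal{C}$ to a multiple of $\nu$, and density of $\mathcal{C}$ in $C(\Omega)$ plus normalisation at $\mathds1$ forces equality; to make this airtight you should spell out that eigenvectors of $\mathcal{L}^*$ at $1$ lie in the range of the adjoint spectral projection (because $1-\mathcal{L}^*$ is invertible on the complementary spectral subspace) and that restriction from $C(\Omega)^*$ to $\mathcal{C}^*$ is injective. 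The trade-off is that this route proves only the stated lemma, whereas the paper's argument additionally establishes that $1$ is the \emph{only} eigenvalue of modulus $1$, a fact the paper relies on later (Theorem~\ref{t:cordecay}); so if you adopt your shortcut here, the peripheral-spectrum statement still has to be proved somewhere.
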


\begin{proof}
Note that $\mathcal{C}$ is dense in $C(\Omega)$ (by the Stone-Weierstrass Theorem, since $\mathcal{C}$ separates points, $\Omega$ is compact, Hausdorff, and $\mathcal{C}$ contains the constant functions). Since $\mathcal{L}$ is Lipschitz on $\mathcal{C}$, it has a continuous extension to $C(\Omega)$, that we also denote by $\mathcal{L}$. Since the dual of $C(\Omega)$ is $\mathscr{M}(\Omega)$, the operator $\mathcal{L}^*:\mathscr{M} \to \mathscr{M}$ is well defined.  Since $\mathcal{L}$ and $\mathcal{L}^*$ have the same spectrum, 1 is also a simple eigenvalue for $\mathcal{L}^*$. By Lemma \ref{l:1-simple-L} we conclude that there exists a probability measure $\nu$ such that $\mathcal{L}^*\nu = \nu.$

To establish the uniqueness of $\nu$ we prove that 1 is the only eigenvalue for $\mathcal{L}$ on the unit disk. Let $\lambda$ be another eigenvalue of modulus 1, and let $\phi_\lambda$ be the eigenvector corresponding to $\lambda$. By orthogonality, $\int \phi_\lambda d\nu = 0. $ But, for any $\psi\in \C$,
we will show that $\mathcal{L}^n(\psi)\to \int \psi d\nu$. This will then give us a contradiction, because $\mathcal{L}^n \phi_\lambda = \lambda^n \phi_\lambda$ does not converge to 0. 

We note that it is sufficient to prove this claim for positive $\psi\in\C$ because if $\psi'\in\C$, then on decomposing $\psi' = \psi^+ - \psi^-$ we get $\mathcal{L}^n \psi' = \mathcal{L}^n\psi'^+ - \mathcal{L}^n \psi'^- \to \int (\psi'^+ - \psi'^-) d\nu = \int \psi' d\nu.$ Now, if $\psi$ is some positive element of $\C$, denote by $\tilde \psi$ any continuous accumulation point of $\mathcal{L}^n \psi.$ We will show that $\tilde \psi$ must be constant.

To see this, observe that $\tilde \psi \geq 0$ and
\[
\sup \tilde \psi \geq \sup \mathcal{L} \tilde \psi \geq \dots \geq \sup \mathcal{L}^n \tilde \psi \geq \dots.
\] Since $\tilde \psi$ is an accumulation point for $\mathcal{L}^n \psi$ none of these inequalities can be strict, so in fact $\sup \mathcal{L}^n \tilde \psi = \sup \tilde \psi$ for all $n\geq 0.$ By continuity, there exists a point $x_n$ with $\sup \mathcal{L}^n \tilde \psi= \mathcal{L}^n \tilde \psi (x_n).$ This implies that
$$
\tilde \psi(x_0) = \mathcal{L}^n \tilde \psi (x_n)
 = \lim_{k\to\infty} \frac{1}{b_k} \sum_{|\zeta|=k} e^{g^{(n)} i_k \zeta_{x_n}} \tilde \psi(i_k \zeta_{x_n})
$$
from where it follows that
\[
\lim_{k\to\infty} \left|
\frac{1}{b_k} \sum_{|\zeta|=k} e^{g^{(n)}(i_k \zeta_{x_n})} \left(
\tilde \psi(x_0) - \tilde \psi(i_k\zeta_{x_n})
\right)
\right| = 0.
\]
Using again the fact that 
\[
\frac{1}{b_k} \sum_{|\zeta|=k} e^{g^{(n)}(i_k\zeta_{x_n})} = \mathcal{L}_k^n \mathds1(x_n) \to 1 ~\text{as}~ k\to\infty
\] 
for each $n\geq 0$, we get that, on writing 
$ \mathcal{L}_k^n \mathds1(x_n)= 1+\epsilon_k$ with $\epsilon_k\to 0$, and observing that $\tilde \psi(x_0) \geq \tilde \psi (i_k\zeta)$ for each branch, 
\[
\lim_{k\to\infty}(1+\epsilon_k) (\tilde \psi(x_0) - \tilde \psi (i_k\zeta_{x_n})) = 0.
\]
Since for each branch $\lim_{k\to\infty} i_k\zeta_{x_n} = y$ where $y$ satisfies $T^n y = x_n$ 
we have that $\tilde \psi (y) = \tilde \psi (x_0)$ for all $y \in \set{T^{-n} x_n}$ for each $n\geq 0.$ It follows from lemma~\ref{l:top-trans} that $\cup_{n} \set{T^{-n} x_n}$ is dense, and so $\tilde \psi$ is constant.

Therefore, 
\[
\tilde \psi = \int \tilde \psi d\nu = \int \mathcal{L}^{n_j} \psi d\nu = \int \psi d\nu
\] where the last equality uses the fact that $\mathcal{L}^* \nu = \nu.$
\end{proof}

We now show that the measure $\nu$ constructed above is conformal. Recall that for any measurable function $f$, a measure $\mu$ is said to be $f-$conformal if for every measurable set $A$ on which the map $T:A \to TA$ is invertible, 
\begin{equation}\label{eq:conformal}
\int_A e^{-f} d\mu = \mu(TA). 
\end{equation} In the context of continuous transformations $T$ on a compact metric space $\Omega$, when the map $T$ has a finite generating partition, one requires that \eqref{eq:conformal} hold for all measurable $A$. In our setup, we do not obtain a finite, or countable, generating partition for the map $T$. Therefore, we only check \eqref{eq:conformal} on open sets.

\begin{proposition}
The measure $\nu$ is $g$-conformal, i.e., if $A$ be an open set on which $T: A \to TA$ is invertible, then
\[
\int_A e^{-g} d\nu = \nu(TA).
\]
\end{proposition}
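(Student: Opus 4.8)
The plan is to establish the conformality relation $\int_A e^{-g}\,d\nu = \nu(TA)$ first on a convenient algebra of sets built from the inverse branches of $T$, and then extend to all open sets by approximation. Recall that $\mathcal{L}^*\nu = \nu$, and that $\mathcal{L}\Phi = L(\Phi)\circ E^{-1}$ with $L$ the normalized transfer operator having potential $g$. The starting point is the dual identity: for any $\Phi \in \mathcal{C}$,
\[
\int \Phi\,d\nu = \int \mathcal{L}\Phi\,d\nu = \int L(\Phi)(E^{-1}x)\,d\nu(x).
\]
Since $T = E\circ\bar\tau$, an open set $A$ on which $T$ is invertible maps forward to an open set $TA$, and the preimage $T^{-1}(TA)$ decomposes into $b$ disjoint pieces, one of which is $A$ itself; write $\chi_A$ for the indicator of $A$. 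The key computation is to apply the pointwise formula $L(\Phi)(y) = \lim_{k\to\infty}\frac{1}{b_k}\sum_{|\zeta|=k} e^{g^{(1)}(i_k\zeta_y)}\Phi(i_k\zeta_y)$ (via the corollary to Lemma~\ref{PkCauchy}) to $\Phi$ chosen to approximate $\chi_A\cdot e^{-g}$, and to recognize that exactly one inverse branch of $\bar\tau$, composed with $E^{-1}$, recovers the inverse branch of $T$ landing in $A$; the factor $e^{g(i_k\zeta_y)}$ then cancels against $e^{-g}$ evaluated at the corresponding preimage point.

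Concretely, I would first prove the identity for $\Phi$ continuous: I claim $\int L(\Phi\, e^{-g}\,\mathbf{1}_{\text{branch}})\circ E^{-1}\,d\nu$ collapses, after the cancellation just described, to $\int \Phi\circ(\text{that branch of }T^{-1})\circ T \cdot (\text{something})\,d\nu$ — the cleanest route is to use the already-noted intertwining relation $\mathcal{L}(\Phi\circ T\cdot\Psi) = \Phi\cdot\mathcal{L}(\Psi)$ stated just before Theorem~\ref{t:LY-perturb}. Taking $\Psi$ to be (an approximation of) $e^{-g}\chi_A$ and $\Phi$ to be (an approximation of) $\chi_{TA}$, one gets $\chi_{TA}\cdot\mathcal{L}(e^{-g}\chi_A) = \mathcal{L}(\chi_{TA}\circ T\cdot e^{-g}\chi_A) = \mathcal{L}(e^{-g}\chi_A)$, because $\chi_{TA}\circ T = 1$ on $A$ (as $T(A)=TA$) and $\chi_A$ kills everything off $A$. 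Integrating against $\nu$ and using $\mathcal{L}^*\nu=\nu$ gives $\int e^{-g}\chi_A\,d\nu = \int \chi_{TA}\,d\nu = \nu(TA)$, which is exactly \eqref{eq:conformal}.

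The rigor issue, and the main obstacle, is that $\chi_A$ and $\chi_{TA}$ are not continuous, so the intertwining relation and the $\nu$-duality cannot be applied to them directly; one must approximate. Here is where openness of $A$ is used: since $A$ is open and $T$ is continuous, $TA$ is open (as $T$ is an open map on the pieces where it is invertible — this itself needs a short argument, or one restricts to $A$ small enough that $T|_A$ is a homeomorphism onto its image, which holds because the inverse branches of $T$ are contractions by $C_E\eta<1$). Then choose sequences $\Phi_n\uparrow\chi_A$, $\Psi_n\uparrow\chi_{TA}$ of continuous (indeed $\mathcal{C}$-class, e.g.\ built from $d(x,\Omega\setminus A)$) functions supported inside $A$, $TA$ respectively, and $\Phi_n'\downarrow\chi_{\bar A}$, $\Psi_n'\downarrow\chi_{\overline{TA}}$ from outside; squeeze the identity $\int e^{-g}\Phi_n\,d\nu = \int \Psi_n\circ T\cdot e^{-g}\Phi_n\,d\nu$ (valid for the continuous approximants) and pass to the limit by dominated convergence, noting $e^{-g}$ is bounded since $|g|_\infty<\infty$. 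The only delicate point in the limit is ensuring $\nu(\partial A)=0$ and $\nu(\partial(TA))=0$ are \emph{not} actually needed — because the relation is an equality between the inner-approximation limits on both sides, and the map $T$ restricted to $A$ being a homeomorphism guarantees $\Phi_n\circ(T|_A)^{-1}$ converges to $\chi_{TA}$ monotonically, so both sides converge to the same thing without any boundary-measure hypothesis. I would therefore organize the proof as: (i) reduce to $A$ on which $T|_A$ is a bi-Lipschitz homeomorphism onto the open set $TA$, using the contraction estimate from Lemma~\ref{l:dense-back-orbit}; (ii) prove the identity for continuous compactly-supported-in-$A$ functions via the intertwining relation and $\mathcal{L}^*\nu=\nu$; (iii) take monotone limits and invoke dominated convergence; (iv) reassemble a general open $A$ on which $T$ is invertible from countably many such pieces using $\sigma$-additivity of $\nu$.
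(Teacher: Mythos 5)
Your concluding step contains a genuine gap. From the intertwining relation you correctly obtain $\chi_{TA}\cdot\mathcal{L}(e^{-g}\chi_A)=\mathcal{L}(e^{-g}\chi_A)$, i.e.\ that $\mathcal{L}(e^{-g}\chi_A)$ vanishes off $TA$. But integrating this against $\nu$ and using $\mathcal{L}^*\nu=\nu$ yields only $\int_{TA}\mathcal{L}(e^{-g}\chi_A)\,d\nu=\int_A e^{-g}\,d\nu$, which is a tautology; it does not give $\int_A e^{-g}\,d\nu=\nu(TA)$ unless you additionally know that $\mathcal{L}(e^{-g}\chi_A)$ equals $1$ ($\nu$-a.e.) on $TA$. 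That pointwise identity is the actual content of conformality, and it cannot be extracted from the two algebraic facts you invoke: the intertwining relation and duality hold for any normalized transfer-type operator regardless of its weights, whereas conformality ties $\nu$ to the specific weight $e^{g}$. The paper's proof consists precisely of establishing $\mathcal{L}(e^{-g}\chi_A)=\chi_{TA}$ by expanding $\mathcal{L}$ through its finite-sublattice approximations: the factor $e^{g(i_k\zeta)}$ cancels against $e^{-g(i_k\zeta)}$ inside the branch sum, and one then counts how many of the $b_k$ level-$k$ branches $\zeta$ satisfy $i_k\zeta\in A$ (the paper's assertion being that this count is $b_k\chi_{TA}(x)$, which balances the $1/b_k$ normalization). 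Your proposal never carries out this evaluation, so the identity you need in step (ii) is not proved.

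There is a related secondary problem: in your sketch of the pointwise route you say that ``exactly one inverse branch'' of $T$ lands in $A$. With the $1/b_k$ averaging built into the definition of $L$ and $\mathcal{L}$, a single surviving branch would make the limit $0$, not $1$; the $\zeta$ at level $k$ are finite-sublattice branches, so the bookkeeping of how many of them land in $A$ relative to $b_k$ is unavoidable, and your proposal does not engage with this normalization at all. By contrast, the approximation machinery you set up (continuous functions increasing to $\chi_A$, monotone limits, dominated convergence, decomposing $A$ into pieces on which $T$ is a homeomorphism) addresses the legitimate but peripheral issue that $\chi_A$ and $\chi_{TA}$ are not in $\mathcal{C}$ --- a point the paper's own terse proof glosses over --- but it cannot substitute for the missing evaluation of $\mathcal{L}(e^{-g}\chi_A)$ on $TA$.
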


\begin{proof}
\begin{eqnarray*}
\int_A e^{-g} d\nu & = & \int_\Omega \mathcal{L} (\chi_A e^{-g})(x)\,d\nu(x)\\
& = & \int_\Omega \lim_{k\to\infty} (1/ b_{k})\sum_{\zeta\in T^{-1}(\pi_k x)} e^{g(i_k \zeta)} e^{-g(i_k\zeta)}\chi_A (i_k\zeta)\,d\nu(x)\\
& = & \int_\Omega \lim_{k\to\infty} (1/b_{k})\sum_{\zeta\in T^{-1}(\pi_k x)} \chi_A(i_k \zeta)\,d\nu(x)\\
& = & \int_{\Omega} \lim_{k\to\infty} (1/b_{k})\chi_{TA} (x) b_{k}\,d\nu(x)\\
& = & \nu (TA).
\end{eqnarray*} The third equality from the bottom follows by observing that $\zeta \in T^{-1} (x) \in A$ if and only if $x \in TA.$

\end{proof}

\begin{thm}\label{t:cordecay}
There exists a constant $0<\varsigma<1$ with the property that for any $\phi_1, \phi_2\in\C$ there exists a constant $C_7$ such that 
\[
\left|
\int \phi_1 \circ T^n \phi_2 d\nu - \int \phi_1 d\nu \int \phi_2 d\nu
\right| 
\leq C_7 |\phi_1|_\infty \|\phi_2\| \varsigma^n.
\]
\end{thm}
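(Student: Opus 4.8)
The plan is to derive exponential decay of correlations from the quasi-compactness of $\mathcal{L}$ together with the facts established in the preceding lemmas: that $1$ is a simple eigenvalue of $\mathcal{L}$ with eigenfunction $\mathds1$, that it is the only eigenvalue of modulus $1$, and that $\mathcal{L}^*\nu=\nu$. First I would invoke the Ionescu-Tulcea--Marinescu decomposition (the second \texttt{defn} in Section~\ref{sec:mathL}): since the peripheral spectrum consists of the single simple eigenvalue $1$, we may write $\mathcal{L}^n = \phi + R^n$ where $\phi$ is the rank-one projection onto $\mathbb{C}\mathds1$ along the complement, $\phi R = R\phi = 0$, and $r(R)=:\varsigma_0<1$. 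The projection $\phi$ must be $\phi(\Psi) = \bigl(\int \Psi\,d\nu\bigr)\mathds1$: indeed $\phi(\Psi)$ is a multiple of $\mathds1$, and applying $\nu$ and using $\mathcal{L}^*\nu=\nu$ together with $\nu R^n \to 0$ (which follows from $r(R)<1$ in operator norm on $\mathcal{C}$, and $\nu$ being continuous on $\mathcal{C}$) gives $\int\Psi\,d\nu = \nu(\phi(\Psi)) = c$, so $\phi(\Psi)=\bigl(\int\Psi\,d\nu\bigr)\mathds1$. Consequently, picking any $\varsigma$ with $\varsigma_0<\varsigma<1$, there is a constant $C$ with
\[
\Bigl\| \mathcal{L}^n(\Psi) - \Bigl(\textstyle\int \Psi\,d\nu\Bigr)\mathds1 \Bigr\| \le C\,\|\Psi\|\,\varsigma^n
\]
for all $\Psi\in\mathcal{C}$ and all $n$.

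Next I would exploit the commutation identity proved in Section~\ref{sec:E}, namely $\mathcal{L}(\Phi\circ T\cdot\Psi) = \Phi\cdot\mathcal{L}(\Psi)$, which iterates to $\mathcal{L}^n(\phi_1\circ T^n\cdot\phi_2) = \phi_1\cdot\mathcal{L}^n(\phi_2)$. Integrating against $\nu$ and using $\mathcal{L}^*\nu=\nu$ once more,
\[
\int \phi_1\circ T^n\,\phi_2\,d\nu = \int \mathcal{L}^n(\phi_1\circ T^n\cdot\phi_2)\,d\nu = \int \phi_1\cdot\mathcal{L}^n(\phi_2)\,d\nu.
\]
Now subtract $\int\phi_1\,d\nu\int\phi_2\,d\nu = \int \phi_1\cdot\bigl(\int\phi_2\,d\nu\bigr)\mathds1\,d\nu$ to obtain
\[
\left|\int \phi_1\circ T^n\,\phi_2\,d\nu - \int \phi_1\,d\nu\int\phi_2\,d\nu\right|
= \left|\int \phi_1\cdot\Bigl(\mathcal{L}^n(\phi_2) - \Bigl(\textstyle\int\phi_2\,d\nu\Bigr)\mathds1\Bigr)\,d\nu\right|.
\]
Bounding the integral by $|\phi_1|_\infty$ times the sup-norm of the bracketed term, and applying the spectral estimate above (with $\|\mathcal{L}^n\phi_2 - (\int\phi_2\,d\nu)\mathds1\|_\infty \le C\|\phi_2\|\varsigma^n$), yields the claim with $C_7 = C$ and the $\varsigma$ chosen above, and one checks $\varsigma$ does not depend on $\phi_1,\phi_2$ (only $C_7$ might, but here even $C_7$ is uniform).

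The main obstacle, I expect, is not any single hard estimate but rather carefully justifying that the abstract Ionescu-Tulcea--Marinescu / Hennion-Herv\'e decomposition genuinely applies with the \emph{sup-norm} as the auxiliary semi-norm and delivers operator-norm (not merely pointwise) control of $R^n$, so that the contraction $\|R^n\|\lesssim\varsigma^n$ holds in $\|\cdot\|$; this is what makes the bound uniform in $\phi_2$ through $\|\phi_2\|$ rather than through some a priori larger quantity. One must also confirm that the continuous extension of $\mathcal{L}$ to $C(\Omega)$ and the duality $\mathcal{L}^*\nu=\nu$ interact correctly with the projection $\phi$ restricted to $\mathcal{C}$ — in particular that $\nu$, though defined on all of $C(\Omega)$, annihilates $R^n\Psi$ in the limit. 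Both points are routine given the quasi-compactness already proved, but they are where the argument needs care; the remaining manipulations are the short computation with the commutation relation displayed above.
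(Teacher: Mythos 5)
Your proposal is correct and follows essentially the same route as the paper: quasi-compactness plus the simplicity and uniqueness of the peripheral eigenvalue $1$ yield the rank-one projection $\Psi\mapsto(\int\Psi\,d\nu)\mathds1$ with exponential contraction of the remainder, and the identity $\int\phi_1\circ T^n\,\phi_2\,d\nu=\int\phi_1\,\mathcal{L}^n\phi_2\,d\nu$ (via $\mathcal{L}^*\nu=\nu$ and the commutation relation) converts this spectral gap into the stated correlation bound. The only cosmetic difference is that the paper centers $\phi_1$ while you center $\mathcal{L}^n\phi_2$; the substance is identical.
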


\begin{proof}
$\mathcal{L}$ has a unique eigenvalue of modulus 1, therefore the projection operator on the eigen-space of 1 is defined by $\mathcal{P}(\phi) = \int \phi d\nu$. Since $\mathcal{L}$ is quasi-compact, there must exist a constant $\varsigma \in (0,1)$ such that (for some $c_1$)
\[
\|\mathcal{L}^n (\phi - \mathcal{P}(\phi))\| \leq c_1 \varsigma^n \|\phi\|.
\] Let $\tilde \phi_1 = \phi_1 - \int \phi_1d\nu.$ Observe that 
\[
\left|
\int \phi_1 \circ T^n \phi_2 d\nu - \int \phi_1 d\nu \int \phi_2 d\nu
\right| 
=
\left|
\int \tilde \phi_1\circ T^n \phi_2 d\nu
\right|
=
\left|\int \tilde \phi_1 \mathcal{L}^n \phi_2 d\nu\right|
\]
which is bounded by $|\tilde \phi_1|_\infty c_1 \varsigma^n \|\phi_2\| + \int \tilde \phi_1 d\nu \int \mathcal{L}^n \mathcal{P}\phi_2 d\nu$ with the second term being $0$. On adjusting the constant $c_1$, we finish the proof.
\end{proof}

\section{Almost sure invariance principle}

\begin{thm}
The invariant measure $\nu$ satisfies the almost sure invariance principle.
\end{thm}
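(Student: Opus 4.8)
The plan is to invoke the abstract criterion of Gou\"ezel \cite{Go2010}, which guarantees the almost sure invariance principle for a sequence of observations $\Phi\circ T^j$ provided one has (i) an underlying transfer operator with a spectral gap on a suitable Banach space, and (ii) the stability of the Lasota--Yorke inequality under a family of ``twisted'' or perturbed operators obtained by multiplying by a bounded family of multipliers. The space $\mathcal{C}$ with the norm $\|\cdot\|=|\cdot|_\infty+|\cdot|_\beta$ will play the role of the strong space and $|\cdot|_\infty$ the role of the weak norm; the operator $\mathcal{L}$ already has a spectral gap by the lemmas in Section \ref{sec:mathL} (1 is a simple isolated eigenvalue, all other spectrum inside a disk of radius $\varsigma<1$), and $\nu$ is the unique $\mathcal{L}^*$-invariant probability measure.

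First I would fix an observable $\Phi\in\mathcal{C}$ with $\nu(\Phi)=0$ (subtracting $\nu(\Phi)$ does not leave $\mathcal{C}$) and, following \cite{Go2010}, introduce the one-parameter family of perturbed operators $\mathcal{L}_t(\Psi)=\mathcal{L}(e^{it\Phi}\Psi)$ for $t$ in a neighbourhood of $0$. The key analytic input is that this family depends continuously (indeed real-analytically) on $t$ in the operator norm on $\mathcal{C}$, and that each $\mathcal{L}_t$ satisfies a uniform Lasota--Yorke inequality: there exist constants $0<\rho<1$ and $C$, independent of $t$ for $|t|$ small, with $\|\mathcal{L}_t^n\Psi\|\le C\rho^n\|\Psi\|+C|\Psi|_\infty$ for all $n$. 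This follows by the same argument that produced Theorem \ref{t:LY-perturb}: the multiplier $e^{it\Phi}$ has $|\,e^{it\Phi}\,|_\infty=1$ and $|e^{it\Phi}|_\beta\le|t|\,|\Phi|_\beta$, so it only perturbs the additive constant in the inequality, not the contraction rate $(C_E\eta)^\beta$; here the hypothesis $C_E\eta<1$ is exactly what is needed. One also checks the continuity estimate $\|\mathcal{L}_t-\mathcal{L}\|\le c\,|t|\,(\|\cdot\|\to|\cdot|_\infty)$ in the mixed operator norm used by Gou\"ezel, together with the uniform boundedness of $\mathcal{L}_t$ in the strong norm; these are immediate from $|e^{it\Phi}-1|\le|t|\,|\Phi|_\infty$ and the analogous H\"older estimate.

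Next, standard perturbation theory for quasi-compact operators (Keller--Liverani, as used in \cite{Go2010}) upgrades these estimates: for $|t|$ small $\mathcal{L}_t$ has a simple leading eigenvalue $\lambda(t)$ depending analytically on $t$, with $\lambda(0)=1$, a corresponding eigenprojection $\mathcal{P}_t$, and a uniform spectral gap on the complementary subspace. One then computes $\lambda'(0)=i\nu(\Phi)=0$ and $\lambda''(0)=-\sigma^2$, where $\sigma^2=\nu(\Phi^2)+2\sum_{n\ge1}\nu(\Phi\cdot\Phi\circ T^n)$ is the usual variance, which is finite and convergent because of the exponential decay of correlations from Theorem \ref{t:cordecay}. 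The characteristic-function expansion $\int e^{it\sum_{j=0}^{n-1}\Phi\circ T^j}\,d\nu=\lambda(t)^n\nu(\mathcal{P}_t\mathds 1)+O(\rho^n)$ then has exactly the form required by the hypotheses of Gou\"ezel's theorem, so the ASIP follows directly, and with it the functional CLT, the law of the iterated logarithm, and the other corollaries stated above.

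The main obstacle I anticipate is purely bookkeeping rather than conceptual: verifying that the operators $\mathcal{L}_t$ and $\mathcal{L}$ genuinely satisfy the \emph{uniform} Lasota--Yorke and continuity estimates in the particular mixed-norm framework of \cite{Go2010}, given the nonstandard definition $\mathcal{L}(\Phi)(x)=L(\Phi)(E^{-1}x)$ and the fact that $\mathcal{L}$ is only a pointwise limit of the finite-lattice approximations $\mathcal{L}_k$. In practice this means repeating the argument of Lemma \ref{lemma:LYlk} and Theorem \ref{t:LY-perturb} with the extra multiplier $e^{it\Phi}$ inserted, checking that all constants can be taken uniform over a small $t$-interval, and confirming that the degenerate case $\sigma^2=0$ (which forces $\Phi$ to be a measurable coboundary) is either excluded or handled by the degenerate branch of Gou\"ezel's statement.
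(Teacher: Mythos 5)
Your proposal follows essentially the same route as the paper: invoke Gou\"ezel's spectral-method ASIP for the twisted family $\mathcal{L}_t(\Psi)=\mathcal{L}(e^{itf}\Psi)$, and verify its hypothesis by showing that the multiplier $e^{itf}$ has $|e^{itf}|_\infty=1$ and H\"older seminorm $O(|t|\,|f|_\beta)$, so that Theorem \ref{t:LY-perturb} yields a uniform bound on $\|\mathcal{L}_{t}^n\|_{\mathcal{C}\to\mathcal{C}}$ for small $t$ --- which is exactly the content of the paper's final lemma. The additional Keller--Liverani perturbation theory, the expansion of the leading eigenvalue $\lambda(t)$, and the variance computation you outline are not needed at this stage, since Gou\"ezel's theorem (as quoted in the paper) only requires the Lasota--Yorke inequality together with the uniform boundedness of the twisted operators; otherwise your argument is correct.
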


For a function $f\in \mathcal{C}$ (note that this $f\in L^\infty(\nu)$) and for $t$ sufficiently close to 0, define a family of operators $\mathcal{L}_{t,f}, t\in\mathbb{R}$, on $\mathcal{C}$ by
\[
\mathcal{L}_{t,f}(\Phi) = \mathcal{L} \left( e^{it f} \Phi\right).
\] We now recall an abstract theorem by Gou\"ezel \cite{Go2010}, stated in a manner relevant to our setting.

\begin{proposition}
Suppose $\mathcal{L}$ satisfies Theorem \ref{t:LY-perturb} and there exists a constant $C_8>0$ such that $\| \mathcal{L}_{f,t}^n \|_{\mathcal{C}\to\mathcal{C}} \leq C_8$ for all $n\in\mathbb{N}$ and for all $t$ small enough. Then there exists a probability space $(\Gamma)$ and two processes $(A_j)$ and $(B_j)$ on $\Gamma$ such that
\begin{enumerate}
\item the processes $(f\circ T^j)$ and $(A_j)$ have the same distribution
\item the random variables $(B_j)$ are independent and distributed as $\mathcal{N}(0, \sigma^2)$ for an appropriately chosen $\sigma^2$, and
\item almost surely in $\Gamma$
\[
\left|
\sum_{l = 0}^{n-1} A_j - \sum_{l = 0}^{n-1} B_j
\right| = o(n^\gamma)
\] for any $\gamma > 0.25.$
\end{enumerate} 
\end{proposition}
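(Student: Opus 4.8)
The plan is to recognize that the three hypotheses of Gou\"ezel's abstract almost sure invariance principle \cite{Go2010} --- analyticity of the twisted family, a spectral gap at the origin, and a uniform-in-$n$ bound on the twisted iterates --- are exactly the ingredients assembled above, and then to invoke that theorem. The bridge between the dynamics and the operators is the identity
$$
\int \mathcal{L}_{t,f}^n \mathds 1 \, d\nu = \int e^{i t S_n f}\, d\nu, \qquad S_n f = \sum_{j=0}^{n-1} f\circ T^j,
$$
valid because $\nu$ is $T$-invariant and $\mathcal{L}$ is its dual transfer operator, i.e.\ $\int (\mathcal{L}\Phi)\Psi\,d\nu = \int \Phi\,(\Psi\circ T)\,d\nu$, which follows from the relation $\mathcal{L}(\Phi\cdot\Psi\circ T)=\Psi\,\mathcal{L}(\Phi)$ noted in Section~\ref{sec:E} together with $\mathcal{L}^*\nu=\nu$. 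Thus the characteristic function of every Birkhoff sum $S_n f$ is read off from the iterates $\mathcal{L}_{t,f}^n\mathds 1$, and the ASIP reduces to spectral control of the family $t\mapsto \mathcal{L}_{t,f}$. Throughout we center the observable, replacing $f$ by $\hat f = f-\nu(f)$, so that the comparison Gaussian is centered; this alters $\mathcal{L}_{t,f}$ only by the scalar $e^{-it\nu(f)}$, and the resulting conclusion for $(\hat f\circ T^j)$ is exactly what the statistical corollaries require.

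First I would verify analyticity. Since $\mathcal{C}$ is a Banach algebra under an equivalent norm (one has $\|\Phi\Psi\|\le c\,\|\Phi\|\,\|\Psi\|$ because $|\Phi\Psi|_\beta \le |\Phi|_\infty|\Psi|_\beta + |\Psi|_\infty|\Phi|_\beta$), the exponential series for $e^{itf}$ converges in $\|\cdot\|$ and $t\mapsto (e^{itf}\,\cdot)$ is an entire family of bounded multiplication operators on $\mathcal{C}$; composing with the bounded operator $\mathcal{L}$ shows $t\mapsto\mathcal{L}_{t,f}=\mathcal{L}\circ(e^{itf}\,\cdot)$ is analytic into the bounded operators on $\mathcal{C}$. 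At $t=0$ the operator $\mathcal{L}_{0,f}=\mathcal{L}$ is quasi-compact (established above), with $1$ a simple, isolated, dominant eigenvalue and the remainder of the spectrum inside a disk of radius $\rho<1$ by Lemma~\ref{l:1-simple-L} and Theorem~\ref{t:cordecay}. Kato analytic perturbation theory then yields, for $|t|$ small, a decomposition
$$
\mathcal{L}_{t,f}^n = \lambda(t)^n\, \Pi_t + N_t^n, \qquad \|N_t^n\|\le C\rho_1^n \ (\rho_1<1),
$$
with $\lambda(t)$ and the rank-one projection $\Pi_t$ analytic, $\lambda(0)=1$, $\lambda'(0)=i\,\nu(\hat f)=0$, and $\lambda''(0)=-\sigma^2$, where $\sigma^2 = \nu(\hat f^2)+2\sum_{n\ge1}\nu(\hat f\cdot \hat f\circ T^n)$ is the Green--Kubo variance; the series converges by the exponential decay of correlations of Theorem~\ref{t:cordecay}, and $\sigma^2\ge0$, vanishing precisely when $\hat f$ is an $L^2(\nu)$-coboundary.

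With these pieces in place I would feed the data into Gou\"ezel's criterion. For $t$ infinitesimally near $0$ the expansion of $\lambda(t)$ governs the characteristic function, and the factorization $\mathcal{L}_{t,f}^{m+n}\mathds 1 \approx \lambda(t)^{m+n}\Pi_t\mathds 1$ up to the exponentially small remainders $N_t^n$ yields Gou\"ezel's multidimensional estimate: the joint characteristic function of the increments of $S_\cdot f$ over well-separated time blocks factorizes up to an error summable in the block separation. For $t$ inside the fixed neighborhood of $0$ supplied by the hypothesis but outside this infinitesimal regime, the eigenvalue $\lambda(t)$ need no longer dominate, and it is precisely the uniform bound $\|\mathcal{L}_{t,f}^n\|\le C_8$ that keeps $\int\mathcal{L}_{t,f}^n\mathds 1\,d\nu$ controlled and suppresses resonances in the frequency integration underlying Gou\"ezel's construction. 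Together these are exactly the hypotheses of Gou\"ezel's abstract theorem \cite{Go2010}, whose conclusion produces the probability space $\Gamma$ and the processes $(A_j)$, $(B_j)$ with $(A_j)\overset{d}{=}(\hat f\circ T^j)$, $(B_j)$ independent and $\mathcal{N}(0,\sigma^2)$-distributed, and $|\sum_{l=0}^{n-1}A_l-\sum_{l=0}^{n-1}B_l|=o(n^\gamma)$ almost surely for every $\gamma>1/4$.

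The main obstacle is the passage from the spectral decomposition to Gou\"ezel's precise block-factorization hypothesis: one must track the accumulation of the remainders $N_t^n$ across many disjoint time-windows and confirm that the uniform bound delivers the required control in the intermediate-frequency regime where the dominant-eigenvalue expansion is unavailable. By contrast, verifying analyticity on the H\"older space $\mathcal{C}$ and the identification $\lambda''(0)=-\sigma^2$ are routine, and the degenerate case $\sigma^2=0$, in which the limiting Gaussian is trivial and the centered partial sums remain bounded, is subsumed in Gou\"ezel's statement.
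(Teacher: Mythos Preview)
Your approach is essentially the same as the paper's: the proposition is not proved in the paper at all but is simply a restatement, tailored to the present setting, of Gou\"ezel's abstract ASIP from \cite{Go2010}. You go further than the paper by sketching why the spectral hypotheses (analyticity of $t\mapsto\mathcal{L}_{t,f}$, quasi-compactness and simple dominant eigenvalue at $t=0$, block-factorization via the Kato decomposition) match Gou\"ezel's framework, but since the proposition already \emph{assumes} the Lasota--Yorke inequality and the uniform bound $\|\mathcal{L}_{t,f}^n\|\le C_8$, all that is strictly required is the citation.
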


As noted in their paper, since a Brownian motion at integer times coincides with a sum of iid Gaussian random variables, this theorem can be formulated as as an almost sure approximation by a Brownian motion. To establish the almost sure invariance principle for our setup, we only need to check that $\| \mathcal{L}_{f,t}^n \|_{\mathcal{C}\to\mathcal{C}}$ stays bounded for $t$ small enough.

\begin{lemma}
There exists a constant $C_9>0$ such that $\| \mathcal{L}_{f,t}^n \|_{\mathcal{C}\to\mathcal{C}} \leq C_9$ 
for all $n\in\mathbb{N}$ and for all $t$ small enough.
\end{lemma}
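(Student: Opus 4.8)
The plan is to establish a uniform-in-$n$, uniform-in-$t$ bound on $\|\mathcal{L}_{f,t}^n\|_{\mathcal{C}\to\mathcal{C}}$ by mimicking the Lasota--Yorke argument of Theorem~\ref{t:LY-perturb}, now carrying along the oscillatory multiplier $e^{itf}$. Write $\mathcal{L}_{f,t}^n(\Phi)(x)=L^n((\Psi_{n,t})(E^{-1}x))$ where one unwinds, as in Section~\ref{sec:E}, that iterating $\mathcal{L}$ against $e^{itf}\cdot$ produces, at level $k$, the sum
\[
\mathcal{L}_{f,t,k}^n(\Phi)(x)=\frac{1}{b_k}\sum_{|\zeta|=k} e^{g^{(n)}(i_k\zeta_{E^{-1}x})}\, e^{it f^{(n)}(i_k\zeta_{E^{-1}x})}\,\Phi(i_k\zeta_{E^{-1}x}),
\]
so the only new feature compared with $L_k^n$ is the unimodular factor $e^{it f^{(n)}}$. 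Since $|e^{itf^{(n)}}|=1$, the sup-norm bound is immediate: $|\mathcal{L}_{f,t}^n(\Phi)|_\infty\le|\Phi|_\infty$ for all $t$, exactly as for $\mathcal{L}$.

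For the H\"older seminorm I would repeat the two-term splitting used in Proposition~\ref{lemma:LYlk}, separating $|\Phi(i_k\zeta_x)-\Phi(i_k\zeta_y)|$ (which contracts by $\eta^{\beta n}$, and then by $C_E^{\beta n}$ after composing with $E^{-1}$) from the variation of the weight $e^{g^{(n)}+itf^{(n)}}$. The key estimate is that
\[
\left|e^{g^{(n)}(a)+itf^{(n)}(a)}-e^{g^{(n)}(b)+itf^{(n)}(b)}\right|
\le e^{g^{(n)}(b)}\left(\left|e^{g^{(n)}(a)-g^{(n)}(b)}-1\right|+|t|\,\bigl|f^{(n)}(a)-f^{(n)}(b)\bigr|\right),
\]
and both Lemma~\ref{lemma:technical} and the geometric bound $|f^{(n)}(i_k\zeta_x)-f^{(n)}(i_k\zeta_y)|\le|f|_\beta\frac{\eta^\beta}{1-\eta^\beta}d(x,y)^\beta$ used inside it show these are $\le c\, d(x,y)^\beta$ uniformly in $n$, with the $|t|$-term harmlessly absorbed for $|t|\le 1$. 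Crucially the distance $d(x,y)^\beta$ here carries a genuine contraction factor $(C_E\eta)^{\beta n}$ coming from the inverse branches composed with $E^{-1}$, exactly as in the proof of Theorem~\ref{t:LY-perturb}; only the constant term lacks it. Passing $k\to\infty$ via the corollary to Lemma~\ref{PkCauchy} (the oscillatory factor does not affect the Cauchy estimate since it has modulus one and its increments are $O(d(x,y)^\beta)$), one obtains
\[
|\mathcal{L}_{f,t}^n(\Phi)|_\beta\le|\Phi|_\beta (C_E\eta)^{\beta n}+C\,|\Phi|_\infty\,C_E^\beta\sum_{i=0}^\infty (C_E\eta)^{i\beta}
\]
with $C$ independent of $t$ for $|t|$ small. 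Combining with the sup-norm bound and using $C_E\eta<1$ gives $\|\mathcal{L}_{f,t}^n(\Phi)\|\le C_9\|\Phi\|$ uniformly in $n$ and in small $t$, which is the claim; then the preceding Proposition (Gou\"ezel's theorem) applies and yields the almost sure invariance principle.

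The main obstacle, and the only place requiring care beyond transcribing earlier arguments, is checking that the perturbed weight $e^{g^{(n)}+itf^{(n)}}$ still enjoys the same \emph{$n$-uniform} distortion bound: this hinges on the telescoping identity for $f^{(n)}$ together with the expansion $d(i_k\bar\tau^i_{(k)}\zeta_x,i_k\bar\tau^i_{(k)}\zeta_y)\le\eta^{\beta(n-i)}d(x,y)$ from Lemma~\ref{lemma:technical}, so that the Birkhoff sum $f^{(n)}$ has bounded H\"older variation along backward orbits uniformly in $n$ --- the same dynamical Bowen-type property already exploited for $g^{(n)}$. Once that is in hand, the $|t|$ dependence is benign because it enters only linearly and multiplies a quantity that is already $O(d(x,y)^\beta)$; restricting to $|t|\le t_0$ for some fixed $t_0$ (as the statement permits) makes all constants uniform.
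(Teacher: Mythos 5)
Your argument is correct in substance, but it takes a genuinely different route from the paper. The paper's proof is a two-line reduction: the sup-norm bound is immediate since $|e^{itf}|=1$, and for the H\"older seminorm it simply applies the already-established Theorem~\ref{t:LY-perturb} to the function $e^{itf}\Phi$, so that the only new work is the elementary product estimate $|e^{itf}\Phi|_\beta\le|\Phi|_\beta+c_2|\Phi|_\infty$ (obtained by the same triangle-inequality computation you use for the multiplier); the twist is thus attached to the \emph{test function}, and the twisted Lasota--Yorke machinery is never re-opened. You instead attach the accumulated twist $e^{itf^{(n)}}$ to the \emph{weight} and re-derive the full Lasota--Yorke inequality for the twisted operator at the level of the approximating operators, controlling the Bowen-type distortion of the Birkhoff sum $f^{(n)}$ along backward branches uniformly in $n$ via the geometric-series bound from Lemma~\ref{lemma:technical}, and then passing $k\to\infty$. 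Your route is longer but has a real advantage: it treats the genuine $n$-fold composition $\mathcal{L}_{f,t}^n$ head-on (the paper's display, read literally, identifies $\mathcal{L}_{f,t}^n(\Phi)$ with $\mathcal{L}^n(e^{itf}\Phi)$, which for $n\ge2$ really requires iterating the one-step bound, harmless since $(C_E\eta)^\beta<1$), and it makes the uniformity in $n$ and the linear dependence on $|t|$ explicit. Two minor caveats in your write-up, neither fatal: the unwound formula for $\mathcal{L}_{f,t,k}^n$ should interleave $E^{-1}$ at every step, so the ergodic sums are along $T=E\circ\bar\tau$ rather than $\bar\tau$ and the relevant branch contraction is $(C_E\eta)^n$ rather than $\eta^nC_E$ (this matches the loose notation the paper itself uses elsewhere, e.g.\ in Lemma~\ref{l:1-simple-L}); correspondingly, the distortion constant is $\frac{(C_E\eta)^\beta}{1-(C_E\eta)^\beta}|f|_\beta$, i.e.\ Lemma~\ref{lemma:technical} needs the routine replacement of $\eta$ by $C_E\eta$, which is available because $C_E\eta<1$.
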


\begin{proof}
Since $|\mathcal{L}_{f,t}^n (\Phi)|_\infty \leq |e^{itf}\Phi|_\infty \leq |\Phi|_\infty$, we only need to check if 
$|\mathcal{L}_{f,t}^n (\Phi)|_\beta $ is bounded. By Theorem \ref{t:LY-perturb} 
\[
|\mathcal{L}_{f,t}^n (\Phi)|_\beta  \leq |e^{itf}\Phi|_\beta  (C_E\eta)^n + C_6 |\Phi|_\infty.
\] 
It remains to bound $|e^{itf}\Phi|_\beta $. Indeed, by the triangle inequality:
\begin{eqnarray*}
\left|
e^{itf(x)}\Phi(x) - e^{itf(y)}\Phi(y)
\right| & \leq & 
\left|
e^{itf(x)}\Phi(x) - e^{itf(x)}\Phi(y)
\right| +
\left|
e^{itf(x)}\Phi(y) - e^{itf(y)}\Phi(y)
\right|\\
& \leq& |\Phi(x) - \Phi(y)| + |\Phi|_\infty |e^{itf(x)} - e^{itf(y)}|\\
& \leq & |\Phi|_\beta d(x, y)^\beta + |\Phi|_\infty c_1 d(x, y)^\beta + o(d(x, y)^\beta)
\end{eqnarray*}
where the constant $c_1$ depends on $f$ (as well as $\eta$ and $\beta$). 
The last inequality uses an argument from lemma~\ref{lemma:LYlk}. On dividing by $d(x, y)^\beta$ 
and taking the supremum we obtain (for some $c_2$)
\[
|e^{itf}\Phi|_\beta  \leq |\Phi|_\beta + c_2 |\Phi|_\infty.
\] 
Hence we can put
\[
C_9:= \max \set{\sup_n \set{(|\Phi|_\beta  + c_2 |\Phi|_\infty) (C_E\eta)^n} + C_6, 1}
\] 
which is finite.
 \end{proof}

\bibliographystyle{alpha}
\bibliography{biblio}

\begin{thebibliography}{BLMMR92}

\bibitem[Bal00]{MR1793194}
Viviane Baladi.
\newblock {\em Positive transfer operators and decay of correlations},
  volume~16 of {\em Advanced Series in Nonlinear Dynamics}.
\newblock World Scientific Publishing Co. Inc., River Edge, NJ, 2000.

\bibitem[Bar02]{MR1936014}
Jean-Baptiste Bardet.
\newblock Limit theorems for coupled analytic maps.
\newblock {\em Probab. Theory Related Fields}, 124(2):151--177, 2002.

\bibitem[BC91]{MR1087346}
Michael Benedicks and Lennart Carleson.
\newblock The dynamics of the {H}\'enon map.
\newblock {\em Ann. of Math. (2)}, 133(1):73--169, 1991.

\bibitem[BG97]{MR1461536}
Abraham Boyarsky and Pawe{\l} G{\'o}ra.
\newblock {\em Laws of chaos}.
\newblock Probability and its Applications. Birkh\"auser Boston Inc., Boston,
  MA, 1997.
\newblock Invariant measures and dynamical systems in one dimension.

\bibitem[BGK07]{MR2293068}
Jean-Baptiste Bardet, S{\'e}bastien Gou{\"e}zel, and Gerhard Keller.
\newblock Limit theorems for coupled interval maps.
\newblock {\em Stoch. Dyn.}, 7(1):17--36, 2007.

\bibitem[BLMMR92]{MR1184470}
L.~A. Bunimovich, R.~Livi, G.~Mart{\'{\i}}nez-Mekler, and S.~Ruffo.
\newblock Coupled trivial maps.
\newblock {\em Chaos}, 2(3):283--291, 1992.

\bibitem[Bow08]{MR2423393}
Rufus Bowen.
\newblock {\em Equilibrium states and the ergodic theory of {A}nosov
  diffeomorphisms}, volume 470 of {\em Lecture Notes in Mathematics}.
\newblock Springer-Verlag, Berlin, revised edition, 2008.
\newblock With a preface by David Ruelle, Edited by Jean-Ren{\'e} Chazottes.

\bibitem[BS88]{MR967468}
L.~A. Bunimovich and Ya.~G. Sina{\u\i}.
\newblock Spacetime chaos in coupled map lattices.
\newblock {\em Nonlinearity}, 1(4):491--516, 1988.

\bibitem[Bun97]{MR1464237}
Leonid~A. Bunimovich.
\newblock Coupled map lattices: some topological and ergodic properties.
\newblock {\em Phys. D}, 103(1-4):1--17, 1997.
\newblock Lattice dynamics (Paris, 1995).

\bibitem[BY93]{MR1218323}
Michael Benedicks and Lai-Sang Young.
\newblock Sina\u\i-{B}owen-{R}uelle measures for certain {H}\'enon maps.
\newblock {\em Invent. Math.}, 112(3):541--576, 1993.

\bibitem[DZ09]{MR2518822}
Jiu Ding and Aihui Zhou.
\newblock {\em Statistical properties of deterministic systems}.
\newblock Tsinghua University Texts. Springer-Verlag, Berlin, 2009.

\bibitem[Gou10]{Go2010}
S{\'e}bastien Gou{\"e}zel.
\newblock Almost sure invariance principle for dynamical systems by spectral
  methods.
\newblock {\em Annals of Probability}, 38(4):1639--1671, 2010.

\bibitem[GR93a]{MR1211566}
V.~M. Gundlach and D.~A. Rand.
\newblock Spatio-temporal chaos. {I}. {H}yperbolicity, structural stability,
  spatio-temporal shadowing and symbolic dynamics.
\newblock {\em Nonlinearity}, 6(2):165--200, 1993.

\bibitem[GR93b]{MR1211567}
V.~M. Gundlach and D.~A. Rand.
\newblock Spatio-temporal chaos. {II}. {U}nique {G}ibbs states for
  higher-dimensional symbolic systems.
\newblock {\em Nonlinearity}, 6(2):201--213, 1993.

\bibitem[GR93c]{MR1211568}
V.~M. Gundlach and D.~A. Rand.
\newblock Spatio-temporal chaos. {III}. {N}atural spatio-temporal measures for
  coupled circle map lattices.
\newblock {\em Nonlinearity}, 6(2):215--230, 1993.

\bibitem[HH01]{MR1862393}
Hubert Hennion and Lo{\"{\i}}c Herv{\'e}.
\newblock {\em Limit theorems for {M}arkov chains and stochastic properties of
  dynamical systems by quasi-compactness}, volume 1766 of {\em Lecture Notes in
  Mathematics}.
\newblock Springer-Verlag, Berlin, 2001.

\bibitem[KK92]{MR1176625}
Gerhard Keller and Martin K{\"u}nzle.
\newblock Transfer operators for coupled map lattices.
\newblock {\em Ergodic Theory Dynam. Systems}, 12(2):297--318, 1992.

\bibitem[KKN92]{MR1192055}
Gerhard Keller, Martin K{\"u}nzle, and Tomasz Nowicki.
\newblock Some phase transitions in coupled map lattices.
\newblock {\em Phys. D}, 59(1-3):39--51, 1992.

\bibitem[KL04]{MR2083421}
Gerhard Keller and Carlangelo Liverani.
\newblock Coupled map lattices without cluster expansion.
\newblock {\em Discrete Contin. Dyn. Syst.}, 11(2-3):325--335, 2004.

\bibitem[KL05]{MR2395729}
G.~Keller and C.~Liverani.
\newblock A spectral gap for a one-dimensional lattice of coupled piecewise
  expanding interval maps.
\newblock In {\em Dynamics of coupled map lattices and of related spatially
  extended systems}, volume 671 of {\em Lecture Notes in Phys.}, pages
  115--151. Springer, Berlin, 2005.

\bibitem[KL06]{MR2200881}
Gerhard Keller and Carlangelo Liverani.
\newblock Uniqueness of the {SRB} measure for piecewise expanding weakly
  coupled map lattices in any dimension.
\newblock {\em Comm. Math. Phys.}, 262(1):33--50, 2006.

\bibitem[PT92]{MR1184471}
Antonio Politi and Alessandro Torcini.
\newblock Periodic orbits in coupled {H}\'enon maps: {L}yapunov and
  multifractal analysis.
\newblock {\em Chaos}, 2(3):293--300, 1992.

\bibitem[Rug02]{MR1981170}
Hans~Henrik Rugh.
\newblock Coupled maps and analytic function spaces.
\newblock {\em Ann. Sci. \'Ecole Norm. Sup. (4)}, 35(4):489--535, 2002.

\bibitem[WY08]{MR2415378}
Qiudong Wang and Lai-Sang Young.
\newblock Toward a theory of rank one attractors.
\newblock {\em Ann. of Math. (2)}, 167(2):349--480, 2008.

\bibitem[You02]{MR1933431}
Lai-Sang Young.
\newblock What are {SRB} measures, and which dynamical systems have them?
\newblock {\em J. Statist. Phys.}, 108(5-6):733--754, 2002.
\newblock Dedicated to David Ruelle and Yasha Sinai on the occasion of their
  65th birthdays.

\end{thebibliography}

\end{document}